\newtheorem{theorem}{Theorem}
\newtheorem{lemma}[theorem]{Lemma}
\newtheorem{corollary}[theorem]{Corollary}
\numberwithin{equation}{section}
\numberwithin{theorem}{section}
\def\Nedelec{N\'ed\'elec}
\newcommand{\grad}{\ensuremath{\operatorname{grad}}}
\newcommand{\curl}{\ensuremath{\operatorname{curl}}}
\newcommand{\mydiv}{\ensuremath{\operatorname{div}}}
\newcommand{\Tr}{\ensuremath{\operatorname{Tr}}}
\newcommand{\rd}{{\mathrm{d}}}
\newcommand{\polyspace}[2]{\ensuremath{\mathcal{P}_{#1}^{#2}}}
\def\Xint#1{\mathchoice
  {\XXint\displaystyle\textstyle{#1}}%
  {\XXint\textstyle\scriptstyle{#1}}%
  {\XXint\scriptstyle\scriptscriptstyle{#1}}%
  {\XXint\scriptscriptstyle\scriptscriptstyle{#1}}%
\!\int}
\def\XXint#1#2#3{{\setbox0=\hbox{$#1{#2#3}{\int}$ }
\vcenter{\hbox{$#2#3$ }}\kern-.6\wd0}}
\def\dashint{\Xint-}
\title[Unisolvence for quasi-polynomial forms]{On the unisolvence for
the quasi-polynomial spaces of differential forms}
\author[S. Wu]{Shuonan Wu}
\address{School of Mathematical Sciences,
  Peking University, Beijing 100871, China}
\email{snwu@math.pku.edu.cn}
\author[L. T. Zikatanov]{Ludmil T. Zikatanov}
\address{Department of Mathematics, The Pennsylvania State University,
University Park, Pennsylvania, 16802, USA}
\email{ludmil@psu.edu}
\begin{document}

\begin{abstract}
We consider quasi-polynomial spaces of differential forms defined as
weighted (with a positive weight) spaces of differential forms with
polynomial coefficients. We show that the unisolvent set of
functionals for such spaces on a simplex in any spatial dimension is
the same as the set of such functionals used for the polynomial
spaces.  The analysis in the quasi-polynomial spaces, however, is not
standard and requires a novel approach. We are able to prove our results
without the use of Stokes' Theorem, which is the standard tool in
showing the unisolvence of functionals in polynomial spaces of
differential forms. These new results provide tools for studying
exponentially-fitted discretizations stable for general
convection-diffusion problems in Hilbert differential complexes.
% As  direct
% applications, we show the well-posedness of high-order simplex-averaged operator
% stable discretization of the general (e.g., $H(\grad)$, $H(\curl)$, $H(\mydiv)$) convection-diffusion problems.
\end{abstract}

\maketitle

%%\tableofcontents

%%%%%%%%%%%%%%%%%%%%%%%%%%%%%%%%%%%%%%%%%%%%%%%%%%%%%%%%%%%%%%%%%%%%%%%%%%%%%%%%%%

\section{Introduction}\label{s:introduction}
The Finite Element (FE) Exterior Calculus (EC)~\cite{2010ArnoldFalkWinther-a,2006ArnoldFalkWinther-a} is a powerful technique that combines tools from differential geometry and finite element analysis in constructing discretizations which inherit the natural structure of the underlying physical models. In our work, we consider general convection-diffusion equations on Hilbert complexes, such as the ones involving $H(\grad)$, $H(\curl)$, and $H(\mydiv)$ in 3D,
describing diffusion (by Hodge Laplacian) and corresponding transport driven by different velocity fields.

The design of stable discretizations for convection-diffusion problems, even in the scalar case,  is a challenging task as these are singularly perturbed differential equations with small, and even vanishing, diffusion (see, e.g.~\cite{1996RoosStynesTobiska-a} and the references therein for discussion on such topics). There is a vast amount of literature on various techniques designed to take care of the numerical instabilities associated with this type of equations. We refer the reader to recent and classical works on the subject focused on some of these techniques: mixed FE methods~\cite{2019BurmanHe-a,2006BrezziMariniMichelettiPietraSacco-a,2002HoustonSchwabSueli-a,1996BrezziMariniPietraRusso-a,1989BrezziMariniPietra-a};
discontinuous Galerkin methods~\cite{2014CangianiGeorgoulisMetcalfe-a,2008DiPietroErnGuermond-a,2007GuermondPopov-a,1974LasaintRaviart-a};
discontinuous Petrov-Galerkin methods~\cite{2019BertrandDemkowiczGopalakrishnanHeuer-a,2011DemkowiczGopalakrishnan-a,2010DemkowiczGopalakrishnan-a}.
%%no cite; 2001/02ArnoldBrezziCockburnMarini-a

Our results show unisolvence for the quasi-polynomial (weighted) spaces used in simplex-averaged finite element (SAFE) discretization~\cite{2019WuXu-a} for convection-diffusion equations in Hilbert complexes. Such exponentially fitted finite element schemes have been used with success for scalar convection-diffusion equations, i.e., in our terminology for convection-diffusion problems in $H(\grad)$.
A rough explanation of the ideas behind SAFE discretizations could be as follows: (1) define a variable representing the flux, as in mixed methods, and use a variable change to symmetrize the equation; (2) discretize the differential operator using discretization for the flux and the primal variable; (3) eliminate the flux (locally) and change the variables to obtain a discretization of the original problem.
Such a path for the derivation of discrete problems is seen in the pioneering work on discretizing drift-diffusion models in 1D~\cite{1969ScharfetterGummel-a} and later in FE and finite volume schemes in higher spatial dimensions~\cite{1998BankCoughranCowsar-a,1999XuZikatanov-a,2005LazarovZikatanov-a}. Recently,
a more general FEEC  approach has brought mechanisms that can utilize higher degree polynomials and can work in any spatial dimension. In addition to the SAFE discretizations~\cite{2019WuXu-a}, the FEEC approach was an important tool in designing exponentially fitted space-time discretizations in~\cite{2017BankVassilevskiZikatanov-a}.

In this work we consider one of the key ingredients needed in steps
(2) and (3) above, namely, determining a set of unisolvent functionals
for the numerical flux. A typical situation in the discretizations
discussed above is the following: Given a polynomial vector space
$\mathcal{P}$, on an $n$-dimensional simplex $T$, we discretize the
flux using a quasi-polynomial space of differential forms,
\begin{equation*} \label{eq:eP-general}
e\mathcal{P}\Lambda^k(T) := \{
e(\bm{x}) \omega ~|~ \omega \in \mathcal{P}\Lambda^k(T) \},
\end{equation*}
where $\mathcal{P}\Lambda^k(T)$ denotes the space of $k$-forms in $\mathbb{R}^n$ with coefficients from $\mathcal{P}$.
For example, for a convection-diffusion equation in $H(\grad)$ in 3D the degrees of freedom (unisolvence functionals) which uniquely determine an element $p\in \mathcal{P}$, are the moments of $p$ on edges of $T$, faces of $T$ and $T$ itself.
The classical works~\cite{1988Bossavit-a,1999Hiptmair-a,2006ArnoldFalkWinther-a,2010ArnoldFalkWinther-a}
usually use the Stokes' Theorem, when verifying the unisolvence of
such functionals for $\mathcal{P}$, and the arguments involve
differentiation of $p$. For quasi-polynomial spaces, with a
non-constant weight $e(\bm{x})$, such differentiation results in terms
that have both derivatives of $p$ and derivatives of $e$. The standard
arguments are, therefore, not applicable except in some special cases,
such as the lowest order first kind of
\Nedelec{}-Raviart-Thomas elements. Our analysis here circumvents the
use of Stokes' Theorem, and we are able to show that the unisolvence
functionals for $\mathcal{P}$ are also unisolvence functionals for
$e\mathcal{P}$ for differential forms of all orders $k$, in any
spatial dimension $n$, and all polynomial spaces of the first and second
kind
(\Nedelec{}--Raviart--Thomas~\cite{1977RaviartThomas-a,1986Nedelec-a,1980Nedelec-a},
 \Nedelec{}--Brezzi--Douglas--Marini~\cite{1986Nedelec-a,1980Nedelec-a,1985BrezziDouglasMarini-a}
 spaces).

The landscape of the paper can be mapped as follows: Preliminaries and FEEC notation is introduced in Section~\ref{s:preliminaries}. The unisolvence sets of functionals for the first kind (\Nedelec{}--Raviart--Thomas) and second kind (\Nedelec--Brezzi--Douglas--Marini) are discussed in Section~\ref{s:first-kind} and Section~\ref{s:second-kind}, respectively. Examples for constructing discretizations of the flux are then given in Section~\ref{s:applications}, and numerical tests are shown in Section~\ref{s:numerics}.

%%%%%%%%%%%%%%%%%%%%%%%%%%%%%%%%%%%%%%%%%%%%%%%%%%%%%%%%%%%%%%%%%%%%%%%%%%%
%% Preliminaries
%%%%%%%%%%%%%%%%%%%%%%%%%%%%%%%%%%%%%%%%%%%%%%%%%%%%%%%%%%%%%%%%%%%%%%%%%%%
\section{Preliminaries}\label{s:preliminaries}

In this section, we present some preliminary results which will be used in the following sections. We begin by a simple result, frequently used in the analysis.
\begin{lemma} \label{lm:no-change-sign}
Let $D$ be an open domain and $f: \bar{D} \to \mathbb{R}$ be a Riemann
integrable function. If $f|_{\bar{D}} \geq 0$ (or $f|_{\bar{D}} \leq
0$), then $\int_D f(x) = 0$ implies that $f|_{\bar{D}} = 0$ almost
everywhere.
\end{lemma}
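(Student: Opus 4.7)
The plan is a brief real-analysis argument; I would not invent anything new but would organize the standard facts. Without loss of generality I assume $f\ge 0$, the opposite case following by replacing $f$ with $-f$. The strategy is to show that the superlevel set $\{f\ge 1/k\}$ has Lebesgue measure zero for every positive integer $k$, so that $\{f>0\}=\bigcup_{k\ge 1}\{f\ge 1/k\}$ is a countable union of null sets and the conclusion follows.

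First I would invoke Lebesgue's criterion for Riemann integrability: a Riemann integrable function on a bounded set is bounded, continuous at almost every point, and its Riemann integral agrees with the corresponding Lebesgue integral. Thus the hypothesis $\int_D f\,dx = 0$ may be read as the vanishing of a Lebesgue integral of a nonnegative function, which places us in a measure-theoretic setting where monotonicity of the integral is immediate.

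Next, for each integer $k\ge 1$, I would set $E_k = \{x\in D : f(x)\ge 1/k\}$. Monotonicity of the Lebesgue integral yields
\[
0 \;=\; \int_D f\,dx \;\ge\; \int_{E_k} f\,dx \;\ge\; \frac{1}{k}\,\lambda(E_k),
\]
forcing $\lambda(E_k)=0$ for every $k$. Taking the countable union gives $\lambda(\{x\in D : f(x)>0\}) = 0$, i.e., $f=0$ almost everywhere on $D$. Since the domains to which this lemma will be applied (simplices and their subfaces) have boundaries of Lebesgue measure zero, this is the same as $f=0$ almost everywhere on $\bar D$.

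There is no substantive obstacle: the only points requiring care are translating the Riemann hypothesis into Lebesgue language via Lebesgue's criterion, and noting that $\lambda(\partial D)=0$ for the domains of interest so that almost-everywhere statements on $D$ and on $\bar D$ coincide.
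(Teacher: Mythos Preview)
Your argument is correct and complete. The paper itself states this lemma as a ``simple result'' and offers no proof, so there is nothing to compare against; your write-up supplies exactly the standard justification one would expect. The one implicit assumption worth making explicit is that $\bar D$ is bounded (otherwise Riemann integrability is not defined in the classical sense), but this is automatic for the simplices to which the lemma is applied, and you already flag the analogous boundary issue.
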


We denote the spaces of polynomials in $n$ variables of degree at most $r$ and of homogeneous polynomial functions
of degree $r$ by $\mathcal{P}_r(\mathbb{R}^n)$ and $\mathcal{H}_r(\mathbb{R}^n)$,  respectively.
We will abbreviate them to $\mathcal{P}_r$ and $\mathcal{H}_r$ at times. Next, following \cite{2006ArnoldFalkWinther-a}, we present some basic notation commonly used in FEEC when working with polynomial differential forms.

\subsection{Simplices and barycentric coordinates}
Let $\Sigma(k,n)$ denote the set of increasing maps $\{1, \ldots, k\}
\to \{1, \ldots, n\}$, for $1\leq k \leq n$. $\rho^*\in
\Sigma(n-k,n)$ is the complementary map of $\rho \in \Sigma(k,n)$ with
$k<n$.  For any $\rho \in \Sigma(k,n)$, denote $(0, \rho): \{0, 1,
\ldots, k\} \to \{0, 1,\ldots n\}$ by $(0, \rho)(0) = 0$. Similarly,
let $\Sigma_0(k,n)$ denote the set of increasing maps $\{0, \ldots,
k\} \to \{0, \ldots, n\}$ for $0\leq k \leq n$. The map complementary
to $\rho \in \Sigma_0(k,n)$, denoted by $\rho^*$, satisfies $\rho^*
\in \Sigma_0(n-k-1,n)$ such that $\mathcal{R}(\rho) \cup
\mathcal{R}(\rho^*) = \{0,\ldots,n\}$.  Here, $\mathcal{R}(\rho)$
represents the range of $\rho$ in ascending order, which is also
denoted by $\rho$ if there is no ambiguity. In addition, $|\rho|$
denotes the cardinality of $\mathcal{R}(\rho)$.

Let $T := [x_0, x_1, \ldots, x_n]$ be an $n$-simplex with the vertices
$x_i$. For each $\rho \in \Sigma_0(k,n)$, the set $f_\rho :=
[x_{\rho(0)}, \ldots, x_{\rho(k)}]$ is a subsimplex of
dimension $k$. For $k < n$, $f_{\rho^*}$ is the
$(n-k-1)$-dimensional subsimplex of $T$ opposite to the
$k$-subsimplex $f_\rho$. The set of subsimplices of dimension $k$ of
$T$ is denoted by $\Delta_k(T)$, and the set of all subsimplices of
$T$ is denoted by $\Delta(T)$.

We denote by $\lambda_0, \ldots, \lambda_n$ the barycentric
coordinates satisfying $\lambda_i(x_j) = \delta_{ij}$.
Clearly, $\lambda_i$ form a basis of $\mathcal{P}_1(\mathbb{R}^n)$ and
satisfy $\sum_{i} \lambda_i = 1$. For a sub-simplex $f = f_\rho$ with $\rho \in \Sigma_0(k,n)$, there is an
isomorphism between $\mathcal{P}_r(f)$ of polynomial functions on $f$ and
the space $\mathcal{H}_r(\mathbb{R}^{k+1})$. That is, each $p \in \mathcal{P}_r(f)$ can be expressed as
$$
p(x) = q\left(\lambda_{\rho(0)}(x), \ldots, \lambda_{\rho(k)}(x) \right)  \quad x \in f,
$$
for a unique $q \in \mathcal{H}_r(\mathbb{R}^{k+1})$. The extension, denoted by $E_{f,T}(p)$, is defined
by extending the right-hand side for $x \in \mathbb{R}^n$. It is readily seen that the extension $E_{f,T}(p)$ is an injective mapping from $\mathcal{P}_r(f)$ to $\mathcal{P}_r(\mathbb{R}^n)$.

Since the vectors $t_i := x_i - x_0~
(i=1,\ldots, n)$ form a basis for $\mathbb{R}^n$, the dual basis
functions $\rd\lambda_i~(i=1,\ldots,n)$ form a basis for
${\rm Alt}^1\mathbb{R}^n$. For any face $f_\rho$, the
restrictions of $\rd\lambda_{\rho(1)}, \ldots,
\rd\lambda_{\rho(k)}$ to the tangent space $V$ of $f_{\rho}$ at
any point of $f_\rho$ give a basis for ${\rm Alt}^1V$.

The algebraic $k$-forms $(\rd\lambda)_\rho := \rd\lambda_{\rho(1)}
\wedge \cdots \wedge \rd\lambda_{\rho(k)}$, $\rho\in \Sigma(k,n)$, form a
basis for ${\rm Alt}^k$. Hence, a differential $k$-form $\omega$
can be uniquely written in the form
\[
\omega = \sum_{\sigma(k,n)} a_\sigma(\rd\lambda)_\sigma.
\]
By definition of wedge product, we have $\rd\lambda_1 \wedge \cdots
\wedge \rd\lambda_n(t_1, \cdots, t_n) = 1$, which implies that
\begin{equation} \label{eq:volumn-form}
\rd\lambda_1 \wedge \cdots \wedge \rd\lambda_n = \frac{1}{n!
  |T|}\operatorname{vol}_T,
\end{equation}
where $\operatorname{vol}_T$ denotes the volume form in
$\Lambda^n(T)$.

\subsection{Whitney forms} For any $\rho \in \Sigma_0(k,n)$ and $f := f_\rho \in \Delta_k(T)$, an
associated differential $k$-form (called {\it Whitney form}) is given
by
\begin{equation} \label{eq:Whitney}
\phi_\rho := \sum_{i=0}^k (-1)^i \lambda_{\rho(i)} \rd
\lambda_{\rho(0)} \wedge \cdots \wedge \widecheck{\rd
  \lambda_{\rho(i)}} \wedge \cdots \wedge \rd \lambda_{\rho(k)},
\end{equation}
where the inverted hat represents a suppressed argument. As shown
below, the Whitney form gives an explicit formulation of the basis of
$\mathcal{P}_1^-\Lambda^k(T)$.  \begin{theorem}[Theorem~4.1 in
\cite{2006ArnoldFalkWinther-a}] \label{tm:Whitney} The Whitney
$k$-forms $\phi_{\rho}$ corresponding to $f_\rho \in \Delta_k(T)$ form
a basis for $\mathcal{P}_1^-\Lambda^k(T)$.
\end{theorem}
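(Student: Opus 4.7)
My plan is a standard dimension count paired with a linear independence argument via restriction to subsimplices. It is well known that $\dim\mathcal{P}_1^-\Lambda^k(T) = \binom{n+1}{k+1}$, which is exactly the cardinality of $\Sigma_0(k,n)$, i.e., the number of $k$-subsimplices $f_\rho \in \Delta_k(T)$. Thus it suffices to verify (i) each $\phi_\rho$ belongs to $\mathcal{P}_1^-\Lambda^k(T)$, and (ii) the family $\{\phi_\rho\}_{\rho \in \Sigma_0(k,n)}$ is linearly independent.

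For (i), I would use the standard decomposition
\[
\mathcal{P}_1^-\Lambda^k(T) = \mathcal{P}_0\Lambda^k(T) + \kappa\mathcal{H}_0\Lambda^{k+1}(T),
\]
where $\kappa$ is the Koszul differential. Each summand in \eqref{eq:Whitney} is a linear polynomial times a constant $k$-form arising as an interior product of the constant $(k+1)$-form $\rd\lambda_{\rho(0)}\wedge\cdots\wedge\rd\lambda_{\rho(k)}$ with the Euler field written in barycentric coordinates; up to a combinatorial sign this is exactly $\kappa$ applied to that constant $(k+1)$-form, so $\phi_\rho$ lies in the Koszul summand and hence in $\mathcal{P}_1^-\Lambda^k(T)$.

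For (ii), the workhorse is the restriction identity: for any $\rho,\sigma \in \Sigma_0(k,n)$, $\phi_\rho|_{f_\sigma} = 0$ whenever $\sigma \ne \rho$, while $\phi_\rho|_{f_\rho}$ is a nonzero constant multiple of the volume form on $f_\rho$. Granted this, a vanishing combination $\sum_\rho c_\rho \phi_\rho = 0$ restricted to $f_\sigma$ collapses to $c_\sigma\phi_\sigma|_{f_\sigma} = 0$, forcing $c_\sigma = 0$ for every $\sigma$. The case $\sigma=\rho$ is a short computation using $\sum_{i=0}^k \lambda_{\rho(i)} \equiv 1$ on $f_\rho$ together with the volume-form identity \eqref{eq:volumn-form} adapted to the subsimplex $f_\rho$.

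The main obstacle is to show $\phi_\rho|_{f_\sigma}=0$ when $\sigma\ne\rho$. One must carefully bookkeep two competing effects on the affine hull of $f_\sigma$: the barycentric coordinates $\lambda_j$ with $j\notin\sigma$ vanish identically on $f_\sigma$, whereas their differentials $\rd\lambda_j$ do \emph{not} vanish on the tangent space of $f_\sigma$ but are instead linearly dependent via the relation $\sum_{i=0}^n \rd\lambda_i = 0$ induced by $\sum_i \lambda_i \equiv 1$. Splitting the expansion \eqref{eq:Whitney} according to which indices $\rho(i)$ belong to $\sigma$ and which do not, and then using this global relation to eliminate the extraneous $\rd\lambda_j$ with $j\notin\sigma$, will display the required cancellations.
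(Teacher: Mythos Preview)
The paper does not supply its own proof of this statement; it is quoted verbatim as Theorem~4.1 of \cite{2006ArnoldFalkWinther-a}. Your plan---dimension count plus linear independence via restriction to the faces $f_\sigma$---is exactly the standard argument used in that reference, so in outline it is correct.

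However, your final paragraph contains a genuine misconception that, left uncorrected, would send you on a needless detour. You write that for $j\notin\sigma$ the differential $\rd\lambda_j$ does \emph{not} vanish on the tangent space of $f_\sigma$. This is false: the trace on $f_\sigma$ is the pullback along the inclusion $\iota:f_\sigma\hookrightarrow T$, and pullback commutes with $\rd$, so
\[
\Tr_{f_\sigma}\rd\lambda_j \;=\; \rd\bigl(\Tr_{f_\sigma}\lambda_j\bigr) \;=\; \rd(0) \;=\; 0
\quad\text{whenever } j\notin\sigma.
\]
(The paper itself records this implicitly when it states that the restrictions of $\rd\lambda_{\rho(1)},\ldots,\rd\lambda_{\rho(k)}$ form a basis of $\mathrm{Alt}^1V$ for the tangent space $V$ of $f_\rho$.) Consequently the vanishing $\Tr_{f_\sigma}\phi_\rho=0$ for $\sigma\ne\rho$ is immediate and requires no ``careful bookkeeping'' or elimination via $\sum_i\rd\lambda_i=0$: pick any $j\in\rho\setminus\sigma$; every summand of \eqref{eq:Whitney} either carries the factor $\rd\lambda_j$ (whose trace vanishes) or carries the coefficient $\lambda_j$ (which vanishes on $f_\sigma$). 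That is the whole argument.
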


%%%%%%%%%%%%%%%%%%%%%%%%%%%%%%%%%%%%%%%%%%%%%%%%%%%%%%%%%%%%%%%%%%%%%%%%%%%
%% General case: P_r^-
%%%%%%%%%%%%%%%%%%%%%%%%%%%%%%%%%%%%%%%%%%%%%%%%%%%%%%%%%%%%%%%%%%%%%%
\section{Unisolvence for quasi-polynomial spaces of first kind}
\label{s:first-kind}
In this section we consider the first type quasi-polynomial
\begin{equation} \label{eq:eP-minus}
e\mathcal{P}_r^-\Lambda^k(T) := \{
e(\bm{x}) \omega ~|~ \omega \in \mathcal{P}_r^-\Lambda^k(T) \},
\end{equation}
Here, $e(\bm{x}) > 0$ is a general positive weight on $\bar{T}$.

\subsection{Geometrical decomposition of the first kind polynomial spaces}
We now introduce the geometrical decomposition of
$\mathcal{P}_r^-\Lambda^k(T)^*$ and $\mathcal{P}_r^-\Lambda^k(T)$. The
degrees of freedom of $\mathcal{P}_r^-\Lambda^k(T)$ as given in \cite[Section~4.6]{2006ArnoldFalkWinther-a} are
\begin{equation}\label{eq:dof-minus}
\int_f \Tr_f \omega \wedge \eta, \quad \eta \in \mathcal{P}_{r+k-\dim
f-1}\Lambda^{\dim f - k}(f), \quad f \in \Delta(T).
\end{equation}
The proof (see, e.g.~\cite{2006ArnoldFalkWinther-a})
that these functionals form a unisolvent set uses induction argument and the
Stokes' Theorem, and the arguments do not carry over to quasi-polynomial spaces.
An attempt to prove the result for quasi-polynomial spaces~\eqref{eq:eP-minus}, however, reveals that
the characterization of the trace free part of the $\mathcal{P}_r^-\Lambda^k(T)$
plays a  crucial role in showing the unisolvence.
Such a characterization is given in the theorem below.
\begin{theorem}[Theorem 4.16 in \cite{2006ArnoldFalkWinther-a}]
\label{tm:trace-free-minus}
For $1\leq k \leq n$, $r \geq n+1 - k$, the map
\begin{equation}\label{eq:trace-free-minus}
\sum_{\rho \in \Sigma(k,n)} a_\rho (\rd \lambda)_{\rho^*} \mapsto
\sum_{\rho \in \Sigma(k,n)} a_\rho \lambda_{\rho^*} \phi_{(0,\rho)},
\end{equation}
where the $a_\rho \in \mathcal{P}_{r+k-n-1}(T)$, defines an
isomorphism of $\mathcal{P}_{r+k-n-1}\Lambda^{n-k}(T)$ onto
$\mathring{\mathcal{P}}_r^-\Lambda^k(T)$. Here, $\lambda_{\rho^*} := \prod_{i=1}^{n-k} \lambda_{\rho^*(i)}$.
\end{theorem}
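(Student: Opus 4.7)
The plan is to verify that the map $\Phi\colon \mathcal{P}_{r+k-n-1}\Lambda^{n-k}(T)\to \mathring{\mathcal{P}}_r^-\Lambda^k(T)$ is (i) well defined with image landing in $\mathring{\mathcal{P}}_r^-\Lambda^k(T)$, (ii) injective, and (iii) sends a space of the same dimension as its codomain, so that injectivity upgrades to a bijection. For (i), the polynomial degree of each summand $a_\rho\lambda_{\rho^*}\phi_{(0,\rho)}$ equals $(r+k-n-1)+(n-k)+1=r$; membership in the $^-$-family follows from Theorem~\ref{tm:Whitney} (which gives $\phi_{(0,\rho)}\in \mathcal{P}_1^-\Lambda^k(T)$) together with the fact that multiplying $\phi_{(0,\rho)}$ by any monomial $\lambda^\alpha$ with $|\alpha|\le r-1$ stays inside $\mathcal{P}_r^-\Lambda^k(T)$. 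Trace vanishing is checked facet by facet on $f_i$ opposite to $x_i$: if $i=0$ or $i\in \mathcal{R}(\rho)$, then $x_i\in f_{(0,\rho)}$ but $x_i\notin f_i$, so $f_{(0,\rho)}\not\subset f_i$ and the Whitney-trace property gives $\Tr_{f_i}\phi_{(0,\rho)}=0$; otherwise $i\in \mathcal{R}(\rho^*)$ and $\lambda_i$ is a factor of $\lambda_{\rho^*}$, which therefore vanishes on $f_i$.

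For (ii), suppose $\omega:=\sum_\rho a_\rho\lambda_{\rho^*}\phi_{(0,\rho)}=0$. Substituting $\rd\lambda_0=-\sum_{j=1}^n \rd\lambda_j$ into~\eqref{eq:Whitney}, a direct calculation yields
\[
\phi_{(0,\rho)} = \Lambda_{(0,\rho)}\,(\rd\lambda)_\rho + \sum_{\substack{\sigma\in\Sigma(k,n)\\ |\rho\triangle\sigma|=2}}\epsilon_{\rho,\sigma}\,\lambda_{m_{\rho,\sigma}}\,(\rd\lambda)_\sigma,\qquad \Lambda_{(0,\rho)}:=\lambda_0+\sum_{i=1}^k \lambda_{\rho(i)},
\]
with $m_{\rho,\sigma}$ the unique element of $\mathcal{R}(\rho)\setminus\mathcal{R}(\sigma)$ and $\epsilon_{\rho,\sigma}\in\{\pm 1\}$. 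A short combinatorial identity $\lambda_{\rho^*}\lambda_{m_{\rho,\sigma}}=\lambda_{\sigma^*}\lambda_{j_{\rho,\sigma}}$, where $j_{\rho,\sigma}$ is the unique element of $\mathcal{R}(\sigma)\setminus\mathcal{R}(\rho)$, then lets me pull a single factor $\lambda_{\sigma^*}$ out of every term in the $(\rd\lambda)_\sigma$-coefficient of $\omega$. Dividing through by this nonzero polynomial leaves
\[
a_\sigma\,\Lambda_{(0,\sigma)} + \sum_{\rho:\,|\rho\triangle\sigma|=2}\epsilon_{\rho,\sigma}\,\lambda_{j_{\rho,\sigma}}\,a_\rho = 0\qquad\text{for each }\sigma\in\Sigma(k,n),
\]
a square linear system $\tilde M(\lambda)\,\vec a(\lambda)=0$ of size $\binom{n}{k}$ with polynomial entries. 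Evaluating at $x_0$ gives $\Lambda_{(0,\sigma)}(x_0)=1$ and $\lambda_{j_{\rho,\sigma}}(x_0)=0$, so $\tilde M(x_0)=I$ and $\det\tilde M$ is not the zero polynomial; multiplying through by the adjugate forces $a_\rho=0$ for every $\rho$.

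For (iii), the dimension count $\dim\mathcal{P}_{r+k-n-1}\Lambda^{n-k}(T)=\binom{n}{k}\binom{r+k-1}{n}$ matches the standard formula for $\dim\mathring{\mathcal{P}}_r^-\Lambda^k(T)$ in~\cite{2006ArnoldFalkWinther-a}, so injectivity gives the isomorphism. The hard part is the injectivity step, and specifically the combinatorial factorization $\lambda_{\rho^*}\lambda_{m_{\rho,\sigma}}=\lambda_{\sigma^*}\lambda_{j_{\rho,\sigma}}$: it is this cancellation that pulls a single nonzero polynomial out of every entry in a given row of the coefficient matrix and reduces what at first looks like a coupled polynomial-rational system into a tidy matrix equation whose determinant is visibly nonzero because it equals $1$ at the vertex $x_0$.
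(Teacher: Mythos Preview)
The paper does not supply a proof of this theorem: it is quoted from \cite{2006ArnoldFalkWinther-a} and used as a black box in the subsequent analysis. So there is no in-paper proof to compare against.

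Your argument is correct and self-contained. A few remarks. In step~(i), the claim that $p\,\phi_{(0,\rho)}\in\mathcal{P}_r^-\Lambda^k(T)$ for $p\in\mathcal{P}_{r-1}$ is cleanest to justify via the characterization $\mathcal{P}_r^-\Lambda^k=\{\omega\in\mathcal{P}_r\Lambda^k:\kappa\omega\in\mathcal{P}_r\Lambda^{k-1}\}$ together with the $C^\infty$-linearity of the Koszul operator $\kappa$; you may wish to say this explicitly rather than leave it as an assertion. In step~(ii), your key identity $\lambda_{\rho^*}\lambda_{m_{\rho,\sigma}}=\lambda_{\sigma^*}\lambda_{j_{\rho,\sigma}}$ is immediate once one notes that $\rho^*\cup\{m_{\rho,\sigma}\}=\sigma^*\cup\{j_{\rho,\sigma}\}$ as multisets (both equal $\{1,\dots,n\}\setminus(\rho\cap\sigma)$), and the adjugate trick works because the polynomial ring is an integral domain. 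In step~(iii), be aware that the dimension formula $\dim\mathring{\mathcal{P}}_r^-\Lambda^k(T)=\binom{n}{k}\binom{r+k-1}{n}$ in \cite{2006ArnoldFalkWinther-a} is obtained from the unisolvence of the degrees of freedom~\eqref{eq:dof-minus}, which is established there \emph{before} Theorem~4.16; hence your citation is not circular, but it is worth noting this dependence.

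For comparison, the proof in \cite{2006ArnoldFalkWinther-a} also proceeds by well-definedness, injectivity, and a dimension count, but handles injectivity differently: it restricts to the facet $f_0$ opposite $x_0$, where $\Tr_{f_0}\phi_{(0,\rho)}=0$ forces the image form to vanish on $f_0$, and then peels off the factor $\lambda_0$ and inducts on $r$. Your determinant argument avoids the induction and is arguably more direct.
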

Next, we use this Theorem to show that the unisolvence functionals for the polynomial space also work for the quasi-polynomials.
\subsection{Polynomials of first kind with vanishing traces}
The main result in this section is the following lemma.
\begin{lemma}\label{lm:quasi-minus}
Let $\omega \in \mathring{\mathcal{P}}_r^- \Lambda^k(T)$. Suppose that
\begin{equation}\label{eq:eP-minus-dofT}
\int e(\bm{x}) \omega \wedge \eta = 0, \qquad \eta \in
\mathcal{P}_{r-n+k-1}\Lambda^{n-k}(T).
\end{equation}
Then $\omega = 0$.
\end{lemma}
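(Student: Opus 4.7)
My plan is to apply Theorem~\ref{tm:trace-free-minus} to parametrize $\omega$ and then test against a carefully chosen $\eta$ that turns $\omega\wedge\eta$ into a sign-definite multiple of the volume form. Concretely, I write
\begin{equation*}
\omega = \sum_{\rho\in\Sigma(k,n)} a_\rho\,\lambda_{\rho^*}\,\phi_{(0,\rho)},\qquad a_\rho\in\mathcal{P}_{r+k-n-1}(T),
\end{equation*}
so that the goal reduces to showing $a_\rho\equiv 0$ for every $\rho\in\Sigma(k,n)$.

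The natural candidate test form is
\begin{equation*}
\eta := \sum_{\sigma\in\Sigma(k,n)} \mathrm{sgn}(\sigma,\sigma^*)\, a_\sigma\, (\rd\lambda)_{\sigma^*},
\end{equation*}
where $\mathrm{sgn}(\sigma,\sigma^*)$ is the sign of the concatenated permutation of $\{1,\ldots,n\}$; a degree check gives $\eta\in\mathcal{P}_{r-n+k-1}\Lambda^{n-k}(T)$, so~\eqref{eq:eP-minus-dofT} applies. Next I would compute each wedge $\phi_{(0,\rho)}\wedge(\rd\lambda)_{\sigma^*}$ by expanding~\eqref{eq:Whitney} and substituting $\rd\lambda_0=-\sum_{j=1}^{n}\rd\lambda_j$ (from $\sum_i\lambda_i=1$). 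This shows that only $\sigma=\rho$ and those $\sigma$ obtained from $\rho$ by a single-index swap $\rho(i)\leftrightarrow j\in\rho^*$ produce nonzero contributions, and the diagonal aggregates to the manifestly non-negative $\sum_\rho a_\rho^2\,\lambda_{\rho^*}\,(\lambda_0+\lambda_{\rho(1)}+\cdots+\lambda_{\rho(k)})$ times $\rd\lambda_1\wedge\cdots\wedge\rd\lambda_n$.

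Assembling all contributions gives $\omega\wedge\eta=F\,\rd\lambda_1\wedge\cdots\wedge\rd\lambda_n$ for some polynomial $F$ on $T$. The crucial structural claim is that the cross terms (from single-index swaps) combine with the diagonal to express $F$ as a sum of barycentric monomials weighted by squares of linear combinations of the $a_\rho$'s, so that $F\geq 0$ on $\bar T$. A direct computation in the guiding case $k=1$ indeed yields $F=\lambda_0\sum_\rho a_\rho^2\,\lambda_{\rho^*}+\lambda_1\cdots\lambda_n\bigl(\sum_\rho a_\rho\bigr)^2$. Granting the analogous decomposition in general, positivity of $e$ on $\bar T$ and Lemma~\ref{lm:no-change-sign} applied to the non-negative integrand $eF$ force $F\equiv 0$ on $T$; the non-negativity of each summand then forces each summand to vanish individually, and in particular the ``pure'' diagonal contributions $a_\rho^2\,\lambda_0\,\lambda_{\rho^*}$ yield $a_\rho\equiv 0$ for every $\rho$, hence $\omega=0$.

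The main obstacle is establishing the non-negative decomposition of $F$ in arbitrary dimension $n$ and form degree $k$. The sign $\mathrm{sgn}(\sigma,\sigma^*)$ inserted in the definition of $\eta$ is precisely what makes the diagonal non-negative, but verifying that the cross terms reinforce rather than cancel against the diagonal requires careful combinatorial bookkeeping of the signs generated by the Whitney-form expansion and the linear relation $\rd\lambda_0=-\sum_{j\geq 1}\rd\lambda_j$. Once this structural identity is in hand, the remainder of the argument (the application of Lemma~\ref{lm:no-change-sign} and the extraction of individual $a_\rho$'s from $F\equiv 0$) is routine.
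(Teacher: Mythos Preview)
Your strategy matches the paper's exactly: the same parametrization via Theorem~\ref{tm:trace-free-minus}, the same test form $\eta=\sum_\rho (-1)^{\sigma(\rho)}a_\rho(\rd\lambda)_{\rho^*}$ (your $\mathrm{sgn}(\sigma,\sigma^*)$ is the paper's $(-1)^{\sigma(\rho)}$), the same observation that only diagonal and single-swap cross terms survive, and the same endgame via Lemma~\ref{lm:no-change-sign}. The gap you yourself flag---the non-negative decomposition of $F$ for general $n,k$---is precisely the heart of the proof, and without it the argument is incomplete.

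The paper closes this gap not by a bare combinatorial sum-of-squares rearrangement but by recognizing the quadratic form $\sum_{\rho,\widetilde\rho}a_\rho a_{\widetilde\rho}m_{\rho\widetilde\rho}$ as an inner product in an auxiliary exterior algebra. Specifically (Lemma~\ref{lm:key-minus}), one sets
\[
\theta=\sum_{\rho\in\Sigma(k,n)}(-1)^{\sigma(\rho)}a_\rho\sum_{i=1}^{k}\sqrt{\lambda_{\rho^*}\lambda_{\rho(i)}}\,\mu_{\rho^*}'\wedge\mu_{\rho(i)}'\in\mathrm{Alt}^{n-k+1}\mathbb{R}^n,
\]
and verifies term by term that
\[
\sum_{\rho,\widetilde\rho}a_\rho a_{\widetilde\rho}m_{\rho\widetilde\rho}=\lambda_0\sum_{\rho}a_\rho^2\lambda_{\rho^*}+\langle\theta,\theta\rangle_{\mathrm{Alt}^{n-k+1}\mathbb{R}^n}.
\]
The point is that the single-swap cross term $m_{\rho\widetilde\rho}=(-1)^{s+t}\lambda_{\rho^*\cup\widetilde\rho^*}$ is exactly the pairing of the two summands of $\theta$ indexed by $\rho$ and $\widetilde\rho$ whose wedge factors $\mu_{\rho^*}'\wedge\mu_{\rho(s)}'$ and $\mu_{\widetilde\rho^*}'\wedge\mu_{\widetilde\rho(t)}'$ agree up to sign; the sign bookkeeping you worried about is absorbed into the orthonormality relations of the basis $(n-k+1)$-vectors. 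Once this identity is established, non-negativity is immediate and the first summand alone forces $a_\rho\equiv0$ in the interior of $T$.
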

Postponing the proof of this lemma for later, we note that its implications show the desired unisolvence results. Indeed,
by induction argument (from low dimension sub-simplices to high dimension sub-simplices), it can be easily shown that
the functionals given in~\eqref{eq:dof-minus} also give degrees of freedom for
$e\mathcal{P}_r^-\Lambda^k(T)$, and we have the following theorem.

\begin{theorem} \label{tm:quasi-minus}
Let $0 \leq k \leq n$, $r \geq 1$. Suppose that $\omega \in
\mathcal{P}_r^-\Lambda^k(T)$
satisfies
$$
\int_f \mathrm{Tr}_f (e(\bm{x}) \omega) \wedge \eta = 0, \quad \eta
\in \mathcal{P}_{r+k-\mathrm{dim}f-1}\Lambda^{\mathrm{dim}f-k}(f),
\quad f \in \Delta(T).
$$
Then $\omega = 0$.
\end{theorem}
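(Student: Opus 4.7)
The plan is to deduce Theorem~\ref{tm:quasi-minus} from Lemma~\ref{lm:quasi-minus} by an induction on the dimension of the subsimplex carrying the trace condition. Concretely, I would prove by induction on $m$, running from $m=k$ up to $m=n$, the statement
\[
\Tr_f \omega = 0 \quad \text{for all } f \in \Delta_m(T),
\]
noting that traces on subsimplices of dimension strictly less than $k$ vanish trivially for a $k$-form. Taking $m=n$ at the end yields $\omega = 0$.

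For the base case $m=k$, fix $f\in\Delta_k(T)$. Then $\Tr_f\omega$ is a top $k$-form on the $k$-simplex $f$, so it belongs to $\mathcal{P}_r^-\Lambda^k(f)$, which coincides with $\mathring{\mathcal{P}}_r^-\Lambda^k(f)$ since there is no boundary-trace obstruction for a top form. The restriction $e|_f$ remains a positive Riemann-integrable weight on $\bar{f}$, and the hypothesis of the theorem specializes on $f$ to
\[
\int_f e|_f\, \Tr_f\omega \wedge \eta = 0, \qquad \eta \in \mathcal{P}_{r-1}\Lambda^0(f),
\]
which is precisely the hypothesis of Lemma~\ref{lm:quasi-minus} applied with $T$ replaced by $f$ (so $n$ replaced by $k$). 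Hence $\Tr_f\omega = 0$.

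For the inductive step, fix $f\in\Delta_m(T)$ with $k<m\leq n$, and assume the conclusion for all subsimplices of strictly smaller dimension. Every proper face $f'$ of $f$ is a subsimplex of $T$ of dimension between $0$ and $m-1$, so by the induction hypothesis (and the triviality for $\dim f' < k$) we have $\Tr_{f'}\omega = 0$. Consequently $\Tr_f\omega$ has vanishing traces on $\partial f$, i.e.\ $\Tr_f\omega \in \mathring{\mathcal{P}}_r^-\Lambda^k(f)$. The hypothesis of the theorem, evaluated on $f$, reads
\[
\int_f e|_f\, \Tr_f\omega \wedge \eta = 0, \qquad \eta \in \mathcal{P}_{r+k-m-1}\Lambda^{m-k}(f),
\]
which matches exactly the hypothesis of Lemma~\ref{lm:quasi-minus} on the $m$-simplex $f$ with the positive weight $e|_f$. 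Therefore $\Tr_f\omega = 0$, completing the induction.

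The only routine points to verify are that traces on subsimplices of dimension less than $k$ are automatically zero for a $k$-form, and that the positivity of $e$ is inherited by each restriction $e|_f$ so that Lemma~\ref{lm:quasi-minus} indeed applies face by face. Given this, the deduction is essentially bookkeeping; the main obstacle is Lemma~\ref{lm:quasi-minus} itself, where Stokes' Theorem cannot be invoked and the trace-free characterization of Theorem~\ref{tm:trace-free-minus} must be combined with the positivity of $e$ (via Lemma~\ref{lm:no-change-sign}) to conclude vanishing of $\omega$.
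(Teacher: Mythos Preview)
Your proposal is correct and follows essentially the same approach as the paper: an induction on the dimension of the subsimplex, starting at $m=k$ (where the boundary trace of a $k$-form on a $(k-1)$-face is automatically zero) and applying Lemma~\ref{lm:quasi-minus} on each face with the restricted positive weight $e|_f$ to step up to $m=n$. Your write-up is in fact more explicit than the paper's, which only sketches the base case, one inductive step, and then says ``the proof is completed by an induction argument.''
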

\begin{proof}
For any $f \in \Delta_k(T)$, the trace of $\omega \in \mathcal{P}_r^-\Lambda^k(T)$ on $\partial f$ vanishes, as it is a $k$-form on a manifold of dimension $k-1$. Noting that $\mathrm{Tr}_f (e(\bm{x})\omega) = e(\bm{x})\mathrm{Tr}_f\omega$,
applying $T = f$ in Lemma \ref{lm:quasi-minus}, we have $\mathrm{Tr}_f
\omega = 0$. Next, for any $f\in \Delta_{k+1}(T)$,
$$
\int_f \mathrm{Tr}_f(e(\bm{x})\omega) \wedge \eta = 0, \quad \eta \in
\mathcal{P}_{r-2}\Lambda^1(f), \quad f\in \Delta_{k+1}(T),
$$
which implies that $\mathrm{Tr}_f \omega = 0$ by applying Lemma~\ref{lm:quasi-minus} again.
The proof is completed by an induction argument.
\end{proof}

Before we proceed the proof of Lemma \ref{lm:quasi-minus} for general
case, we first give some examples to fix the ideas as abstractions can
often be difficult to grasp.  Noting that cases for $k$-forms in which
$k = 0$ or $k = n$ are trivial.

\begin{proof}[Proof of Lemma~\ref{lm:quasi-minus} for 1-forms in $2D$] In this case, there are two
maps in $\Sigma(1,2)$, namely
\[
\rho_1(1) = 1, \quad
\rho_2(1) = 2.
\]
In light of \eqref{eq:trace-free-minus}, $\omega$ can be uniquely
written as
\[
\begin{aligned}
\omega &= a_1 \lambda_2 \phi_{(0,\rho_1)} + a_2 \lambda_1\phi_{(0,
\rho_2)} \\
&= a_1 \lambda_2(\lambda_0 \rd \lambda_1 - \lambda_1\rd \lambda_0) +
a_2\lambda_1(\lambda_0\rd \lambda_2 - \lambda_2\rd\lambda_0),
\end{aligned}
\]
where $a_i \in \mathcal{P}_{r-2}(T), i=1,2$. We choose a special test
form $\eta$ defined as
\[
\eta = a_1 \rd \lambda_2 - a_2 \rd \lambda_1 \in
\mathcal{P}_{r-2}\Lambda^1(T).
\]
We note here that the sign of $a_2$ is in accordance with the isomorphism
defined in \eqref{eq:trace-free-minus}. Using that $\lambda_0 + \lambda_1 +
\lambda_2 = 1$, $\rd\lambda_0 = -\rd\lambda_1 - \rd\lambda_2$,
and collecting the coefficients of $\omega \wedge \eta$ then shows that
% \begin{itemize}
% \item Coefficient in front of $a_1^2$:
% \[
% \lambda_2(\lambda_0 \rd \lambda_1 - \lambda_1\rd \lambda_0) \wedge
% \rd \lambda_2 = \lambda_2(\lambda_0 + \lambda_1)\rd \lambda_1 \wedge \rd
% \lambda_2.
% \]
% \item Coefficient in front of $a_2^2$:
% \[
% \lambda_1(\lambda_0\rd \lambda_2 - \lambda_2\rd\lambda_0) \wedge
% (-\rd\lambda_1) = \lambda_1(\lambda_0 + \lambda_2)\rd \lambda_1 \wedge \rd
% \lambda_2.
% \]
% \item Coefficient in front of $a_1 a_2$:
% \[
% \begin{aligned}
% & \lambda_2(\lambda_0 \rd \lambda_1 - \lambda_1\rd \lambda_0) \wedge
% (-\rd\lambda_1) \\
% +~ & \lambda_1(\lambda_0\rd \lambda_2 - \lambda_2\rd\lambda_0)
% \wedge \rd \lambda_2 = 2\lambda_1\lambda_2
% \rd\lambda_1\wedge\rd\lambda_2.
% \end{aligned}
% \]
% \end{itemize}
% Hence,
\[
\begin{aligned}
0 &= \int e(\bm{x}) \omega \wedge \eta \\
& = \int e(\bm{x}) [a_1, a_2]
\begin{bmatrix}
\lambda_2(\lambda_0 + \lambda_1) & \lambda_1\lambda_2 \\
\lambda_1\lambda_2 & \lambda_1(\lambda_0 + \lambda_2)
\end{bmatrix}
\begin{bmatrix}
a_1 \\
a_2
\end{bmatrix}
\rd\lambda_1\wedge \rd\lambda_2.
\end{aligned}
\]
On the other hand the polynomial function under the integral is non-negative as seen below,
\[
\begin{aligned}
& [a_1, a_2]
\begin{bmatrix}
\lambda_2(\lambda_0 + \lambda_1) & \lambda_1\lambda_2 \\
\lambda_1\lambda_2 & \lambda_1(\lambda_0 + \lambda_2)
\end{bmatrix}
\begin{bmatrix}
a_1 \\
a_2
\end{bmatrix} \\
= & \lambda_0(a_1^2 \lambda_1 + a_2^2\lambda_2) +
\lambda_1\lambda_2(a_1 + a_2)^2 \geq 0.
\end{aligned}
\]
Finally, using that the weight is positive, i.e.,
$e(\bm{x}) > 0$ together with Lemma \ref{lm:no-change-sign}, shows that $a_1 = a_2 = 0$ and therefore proves Lemma~\ref{lm:quasi-minus} for $n=2$ and $k=1$.
\end{proof}

\begin{proof}[Proof of Lemma~\ref{lm:quasi-minus} for $1$-forms and any spatial dimension $n$] Consider the general case $\omega \in
\mathring{\mathcal P}_r^-\Lambda^1(T)$. In a similar way, there are
$n$ maps in $\Sigma(1,n)$, denoted by $\{\rho_i\}_{i=1}^n$, where
$\rho_i(1) = i$. Hence, $\omega$ can be uniquely written as
$$
\omega = \sum_{i=1}^n a_i \left(\prod_{j=1,j\neq i}^n\lambda_j \right)
\phi_{(0,\rho_i)}
 = \sum_{i=1}^n a_i \left(\prod_{j=1,j\neq i}^n\lambda_j \right)
(\lambda_0 \rd \lambda_i - \lambda_i \rd \lambda_0),
$$
where $a_i \in \mathcal{P}_{r-n}(T)$.  Taking a special test
$(n-1)$-form as
$$
\eta = \sum_{i=1}^n (-1)^{i+1} a_i (\rd
    \lambda)_{\rho_i^*} = \sum_{i=1}^n (-1)^{i+1}a_i \rd \lambda_1
\wedge \cdots \wedge \widecheck{\rd \lambda_i} \wedge \cdots \wedge
\rd\lambda_n,
$$
which gives
$$
\omega \wedge \eta = \bm{a}^T M \bm{a}
\frac{\operatorname{vol}_T}{n!|T|}, \qquad \text{where }\bm{a} = [a_1,
\cdots, a_n]^T,
$$
and $M = (m_{ij})$ with
\begin{equation} \label{eq:M-1}
m_{ij} = \left\{
\begin{aligned}
(\lambda_0 + \lambda_i) \prod_{l=1,l\neq i}^n\lambda_l & \quad i=j,\\
\prod_{l=1}^n \lambda_l & \quad i\neq j.
\end{aligned}
\right.
\end{equation}
% Clearly,
% $$
% \bm{a}^T M \bm{a} = \lambda_0 \sum_{i=1}^n
% \left(a_i^2\prod_{l=1,l\neq i}^n\lambda_l\right) + \left(\prod_{l=1}^n
% \lambda_l\right) (\sum_{j=1}^n a_j)^2 \geq 0.
% $$
If we now denote \(b_i=\prod_{l=0,l\neq i}^n\lambda_l\),
$i=0,\ldots,n$, we have
$$
M = \operatorname{diag}(b_1,\ldots,b_n) +
b_0\bm{1}\bm{1}^T,\quad \bm{1}=(1,\ldots,1)^T,
$$
or
\begin{equation} \label{eq:atma1}
\bm{a}^TM\bm{a} =  \left(\sum_{i=1}^n a_i^2 b_i\right) + b_0\left(
\sum_{i=1}^n a_i \right)^2.
\end{equation}
Since $b_i > 0$, $i=0,\ldots,n$ for $\bm{x}$ in the interior of $T$, this shows
Lemma~\ref{lm:quasi-minus} for $k=1$ and any spatial dimension $n$.
%% We reformulate $M$ as
%% $$
%% M = \prod_{l=0}^n \lambda_l
%% \begin{bmatrix}
%% 1/\lambda_0 + 1/\lambda_1 & 1/\lambda_0 & \cdots & 1/\lambda_0 \\
%% 1/\lambda_0 & 1/\lambda_0 + 1/\lambda_2 & \cdots & 1/\lambda_0 \\
%% \vdots & & \ddots & \vdots \\
%% 1/\lambda_0 & \cdots & 1/\lambda_0 & 1/\lambda_0 + 1/\lambda_n
%% \end{bmatrix}.
%% $$
%% We then compute the its $n$ upper left determinants. By induction,
%% $$
%% \det M_k = \left(\prod_{l=0}^n \lambda_l\right)^{k} \frac{\sum_{l=0}^k
%% \lambda_l}{\prod_{l=0}^k \lambda_l} \geq 0,
%% $$
%% which means $M$ is SSPD on $\bar{T}$ and SPD in the interior of $T$.
%% By Lemma \ref{lm:no-change-sign}, we conclude that $a_i = 0$ and
%% therefore $\omega = 0$.
%% \SW{No need to compute the principal minors! We can give a direct
%% proof as follows.} From the definition of $M$ in \eqref{eq:M-1},
%% $$
%% \begin{aligned}
%% \bm{a}^T M \bm{a} &= \lambda_0 \sum_{i=1}^n \left(a_i^2
%% \prod_{l=1,l\neq i}^n \lambda_l\right) + \prod_{l=1}^n \lambda_l
%% (\sum_{i=1}^n a_i)^2 \\
%% &= \lambda_0 \sum_{i=1}^n \left(a_i^2
%% \prod_{l=1,l\neq i}^n \lambda_l\right) +
%% (\sum_{i=1}^n a_i  \sqrt{\prod_{l=1}^n \lambda_l}
%% )^2 \geq 0.
%% \end{aligned}
%% $$
%% Moreover, in the interior of $T$ (namely $\lambda_i > 0$), the matrix $M$ is SPD.
\end{proof}

\begin{proof}[Proof  of Lemma~\ref{lm:quasi-minus} for $(n-1)$-forms in any spatial dimension $n$] Consider the case in which $\omega
\in \mathring{\mathcal{P}}_r^- \Lambda^{n-1}(T)$. Again, there are
$n$ maps in $\Sigma(n-1, n)$, whose complements we  denote by
$\rho_i^*$ with $\rho_i^*(1) = i$. Next, we rewrite $\omega$ as
\[
\omega = \sum_{i=1}^n a_i^* \lambda_i \phi_{(0,\rho_i)},
\]
where $a_i^* \in \mathcal{P}_{r-2}(T)$. Choose a special test function
\[
\eta = \sum_{i=1}^n (-1)^{n+i} a_i^* \rd \lambda_i.
\]
Then, we have $\omega \wedge \eta = (\bm{a}^*)^T M \bm{a}^*
\frac{\operatorname{vol}_T}{n!|T|}$, where $\bm{a}^* = [a_1^*, \ldots,
a_n^*]^T$ and $M = (m_{ij})$ with
$$
m_{ij} = \left\{
\begin{aligned}
\lambda_i (1-\lambda_i) & \quad i=j,\\
(-1)^{i+j+1}\lambda_i\lambda_j & \quad i\neq j.
\end{aligned}
\right.
$$
% As for $1$-forms, we see immediately that $M$ is a rank 1
%     perturbation of a diagonal matrix. Indeed, we have
%   \[
%   M=\bm{\Lambda} -   \bm{\Lambda}\bm{s}\bm{s}^T\bm{\Lambda}, \quad
%   \bm{\Lambda}=\operatorname{diag}(\lambda_1,\ldots,\lambda_n),
%   \quad \bm{s}_k = (-1)^k.
%   \]
%   We observe that $\|\bm{s}\|^2_{\bm{\Lambda}}=1$ and apply the
%   Schwarz inequality to obtain that
%   \begin{eqnarray*}
%     (\bm{a}^*)^T M \bm{a}^* &=&
%     \|\bm{a}^*\|^2_{\bm{\Lambda}} - (\bm{a}^*,\bm{s})^2_{\bm{\Lambda}}
%  \ge
%     \|\bm{a}^*\|^2_{\bm{\Lambda}} -
%     \|\bm{a}^*\|^2_{\bm{\Lambda}}\|\bm{s}\|^2_{\bm{\Lambda}}=\lambda_0 \|\bm{a}^*\|_{\bm{\Lambda}}^2 \geq 0.
%   \end{eqnarray*}
%   This shows that $M$ is symmetric positive semidefinite.
%
We then have
\begin{equation}\label{eq:atma2}
(\bm{a}^*)^T M \bm{a}^* = \lambda_0 \sum_{i=1}^n (a_i^*)^2 \lambda_i
+ \sum_{1\leq i <  j \leq n}\lambda_i \lambda_j \left(
(-1)^ia_i^* - (-1)^j
    a_j^*
\right)^2 \geq 0.
\end{equation}
Therefore, we see that Lemma~\ref{lm:quasi-minus} holds for any  $n$ with $k=(n-1)$.
\end{proof}

\subsection{Summary (spacial cases  of Lemma~\ref{lm:quasi-minus})}
Let us summarize what we have shown so far: Lemma \ref{lm:quasi-minus}
holds for any $k$-form for $k = 0, 1, n-1, n$. As a consequence, for spatial dimensions $n \leq 3$, we have
proved Lemma~\ref{lm:quasi-minus} in all the possible cases.

To generalize the ideas for other values of $n$ and $k$, we proceed as
in the special cases considered above. For a given $\omega \in
\mathring{\mathcal P}_r^-\Lambda^k(T)$, we find a special test
form $\eta \in \mathcal{P}_{r-n+k-1}\Lambda^{n-k}(T)$ so that $\omega
\wedge \eta$ does not change sign on $\bar{T}$.
This gives us a ``mass'' matrix $M$ which corresponds to the Whitney form bases in $\mathring{\mathcal P}_r^-\Lambda^k(T)$ and
$\mathcal{P}_{r-n+k-1}\Lambda^{n-k}(T)$.

We now follow this plan and generalize the unisolvence result to discrete differential forms of arbitrary order in any spatial dimension and any quasi-polynomial Hilbert complex of first kind.

\subsection{Calculating the ``mass'' matrix}
Let us fix the spatial dimension $n$ and recall that
$$
\frac{1}{n!|T|}\operatorname{vol}_T = \rd \lambda_1 \wedge \cdots \wedge
\rd\lambda_n.
$$
We now define $\sigma(\rho)$ as the {\it number of inversions} of the
array corresponding to $\mathcal{R}(\rho) \mathcal{R}(\rho^*)$. For
instance, when $\rho \in \Sigma(2,5)$ such that $\mathcal{R}(\rho) =
\{3,5\}$, then $\mathcal{R}(\rho)\mathcal{R}(\rho^*)$ associates
with the array $3,5,1,2,4$ and hence $\sigma(\rho) = 5$. It is easy to
show that $\sigma(\rho^*) = k(n-k) - \sigma(\rho)$ for any $\rho \in
\Sigma(k,n)$.

We first give the following result relating the maps $(0,\rho)$,
   $\rho^*$ and the Whitney forms.
\begin{lemma} \label{lm:equal-minus}
For any $\rho \in \Sigma(k,n)$,
\begin{equation} \label{eq:equal-minus}
\phi_{(0,\rho)} \wedge (\rd\lambda)_{\rho^*} =
(-1)^{\sigma(\rho)}\left(
\lambda_0 + \sum_{i=1}^k \lambda_{\rho(i)}
\right) \frac{\operatorname{vol}_T}{n!|T|}.
\end{equation}
\end{lemma}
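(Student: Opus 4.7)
The plan is to expand $\phi_{(0,\rho)}$ via its definition \eqref{eq:Whitney} and split the sum into the $i=0$ term and the remaining terms. Explicitly,
\[
\phi_{(0,\rho)} = \lambda_0 (\rd\lambda)_\rho + \sum_{i=1}^k (-1)^i \lambda_{\rho(i)}\, \rd\lambda_0 \wedge \rd\lambda_{\rho(1)} \wedge \cdots \widecheck{\rd\lambda_{\rho(i)}} \cdots \wedge \rd\lambda_{\rho(k)}.
\]
Wedging the $i=0$ term with $(\rd\lambda)_{\rho^*}$ yields $\lambda_0 (\rd\lambda)_\rho \wedge (\rd\lambda)_{\rho^*}$. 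By the very definition of $\sigma(\rho)$ as the number of inversions of the array $\mathcal R(\rho)\mathcal R(\rho^*)$, the wedge $(\rd\lambda)_\rho \wedge (\rd\lambda)_{\rho^*}$ equals $(-1)^{\sigma(\rho)} \rd\lambda_1\wedge\cdots\wedge\rd\lambda_n$, and we use \eqref{eq:volumn-form} to rewrite this as $(-1)^{\sigma(\rho)}\operatorname{vol}_T/(n!|T|)$. This accounts for the $\lambda_0$ contribution on the right-hand side of \eqref{eq:equal-minus}.

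For the $i\geq 1$ terms, the idea is to eliminate $\rd\lambda_0$ using the identity $\rd\lambda_0 = -\sum_{j=1}^n \rd\lambda_j$, which follows from $\sum_{j=0}^n \lambda_j = 1$. When we wedge $\rd\lambda_0$ with $\rd\lambda_{\rho(1)}\wedge\cdots\widecheck{\rd\lambda_{\rho(i)}}\cdots\wedge\rd\lambda_{\rho(k)}\wedge(\rd\lambda)_{\rho^*}$, every $\rd\lambda_j$ with $j\in\{1,\ldots,n\}\setminus\{\rho(i)\}$ already appears in the wedge, so it vanishes; only the $j=\rho(i)$ contribution survives, leaving
\[
\rd\lambda_0 \wedge \rd\lambda_{\rho(1)} \wedge \cdots \widecheck{\rd\lambda_{\rho(i)}} \cdots \wedge \rd\lambda_{\rho(k)} \wedge (\rd\lambda)_{\rho^*} = -\rd\lambda_{\rho(i)} \wedge \rd\lambda_{\rho(1)} \wedge \cdots \widecheck{\rd\lambda_{\rho(i)}} \cdots \wedge \rd\lambda_{\rho(k)} \wedge (\rd\lambda)_{\rho^*}.
\]

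Next, I reinsert $\rd\lambda_{\rho(i)}$ into its natural $i$-th slot by performing $i-1$ adjacent transpositions, producing a factor $(-1)^{i-1}$; the right-hand side becomes $-(-1)^{i-1}(\rd\lambda)_\rho \wedge (\rd\lambda)_{\rho^*} = (-1)^{i+1}(-1)^{\sigma(\rho)}\operatorname{vol}_T/(n!|T|)$. Multiplying by the external coefficient $(-1)^i \lambda_{\rho(i)}$ from the Whitney form expansion, the $(-1)^i (-1)^{i+1}=-1$ signs combine with the leading minus coming from $\rd\lambda_0=-\sum_j\rd\lambda_j$ to give a net $+1$, and the $i$-th term contributes exactly $(-1)^{\sigma(\rho)}\lambda_{\rho(i)}\operatorname{vol}_T/(n!|T|)$. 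Summing over $i=1,\ldots,k$ and adding the $i=0$ contribution yields \eqref{eq:equal-minus}.

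The calculation is essentially routine once the right substitution is made; the main obstacle is purely the sign bookkeeping: tracking the $(-1)^i$ from the Whitney definition, the additional $(-1)^{i-1}$ from repositioning $\rd\lambda_{\rho(i)}$, the extra minus sign from $\rd\lambda_0 = -\sum_{j=1}^n\rd\lambda_j$, and verifying that all these combine cleanly with the single factor $(-1)^{\sigma(\rho)}$ coming from $(\rd\lambda)_\rho\wedge(\rd\lambda)_{\rho^*}$. A careful check that the definition of $\sigma(\rho)$ used here indeed matches the sign of the shuffle permutation sending $(1,\ldots,n)$ to $(\mathcal R(\rho),\mathcal R(\rho^*))$ is the only conceptual point worth double-checking.
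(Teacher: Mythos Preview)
Your approach is essentially identical to the paper's: expand $\phi_{(0,\rho)}$ as in \eqref{eq:Whitney}, treat the $i=0$ term directly, and for $i\ge 1$ replace $\rd\lambda_0$ by the single surviving summand $-\rd\lambda_{\rho(i)}$ and shuffle it back into position. The paper carries this out in exactly the same way.

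One small wrinkle in your sign bookkeeping: you write that $-(-1)^{i-1}$ equals $(-1)^{i+1}$, but in fact $-(-1)^{i-1}=(-1)^{i}$. You then recover the correct net sign $+1$ by invoking ``the leading minus coming from $\rd\lambda_0=-\sum_j\rd\lambda_j$'' a second time, even though that minus was already absorbed into your displayed equation $\rd\lambda_0\wedge\cdots = -\rd\lambda_{\rho(i)}\wedge\cdots$. So you have two compensating slips. The clean count is simply $(-1)^i\cdot(-1)\cdot(-1)^{i-1}=(-1)^{2i}=+1$, matching the paper's line $(-1)^i\to(-1)^{i-1}\to 1$. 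The argument and conclusion are correct; only the narrative of the sign tracking needs tidying.
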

\begin{proof}
We use the definition of the Whitney form \eqref{eq:Whitney}, to obtain that
\begin{equation} \label{eq:Whitney2}
\phi_{(0,\rho)} = \lambda_0 (\rd\lambda)_{\rho} + \sum_{i=1}^k (-1)^i
\lambda_{\rho(i)} \rd\lambda_0 \wedge \cdots \wedge
\widecheck{\rd\lambda_{\rho(i)}} \wedge \cdots \wedge
\rd\lambda_{\rho(k)}.
\end{equation}
Note that for any $1\leq i \leq k$,
$$
\begin{aligned}
& (-1)^i \lambda_{\rho(i)} \rd\lambda_0 \wedge \cdots \wedge
\widecheck{\rd\lambda_{\rho(i)}} \wedge \cdots \wedge
\rd\lambda_{\rho(k)} \wedge (\rd\lambda)_{\rho^*} \\
=~ & (-1)^{i-1} \lambda_{\rho(i)}
\rd\lambda_{\rho(i)} \wedge \cdots \wedge
\widecheck{\rd\lambda_{\rho(i)}} \wedge \cdots \wedge
\rd\lambda_{\rho(k)} \wedge (\rd\lambda)_{\rho^*} \\
=~ & \lambda_{\rho(i)} (\rd\lambda)_{\rho} \wedge
(\rd\lambda)_{\rho^*} = (-1)^{\sigma(\rho)}
\lambda_{\rho(i)} \frac{\operatorname{vol}_T}{n!|T|}.
\end{aligned}
$$
The result follows by summing up the identities above.
\end{proof}

\begin{lemma} \label{lm:cross-minus}
For $\rho, \widetilde{\rho} \in \Sigma(k,n)$ and $\rho \neq \widetilde{\rho}$,
it holds that
\begin{equation} \label{eq:cross-minus}
\begin{aligned}
& ~\quad \phi_{(0,\rho)} \wedge (\rd\lambda)_{\widetilde{\rho}^*} \\
& =
\left\{
\begin{aligned}
(-1)^{\sigma(\widetilde{\rho}) + s + t} \lambda_{\rho\cap \widetilde{\rho}^*}
\frac{\operatorname{vol}_T}{n!|T|} &
\quad \text{if } \rho \cap
\widetilde{\rho}^*
= \{\rho(s)\}, \rho^* \cap
\widetilde{\rho} = \{\widetilde{\rho}(t)\}, \\
0 & \quad \text{otherwise}.
\end{aligned}
\right.
\end{aligned}
\end{equation}
\end{lemma}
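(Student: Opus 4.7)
The plan is to expand $\phi_{(0,\rho)}$ using the identity \eqref{eq:Whitney2} already obtained in the proof of Lemma \ref{lm:equal-minus}, and then analyze its wedge product with $(\rd\lambda)_{\widetilde{\rho}^*}$ summand by summand. The leading $i=0$ piece $\lambda_0 (\rd\lambda)_\rho \wedge (\rd\lambda)_{\widetilde{\rho}^*}$ vanishes automatically, since $\rho \neq \widetilde{\rho}$ with both in $\Sigma(k,n)$ forces $\rho \cap \widetilde{\rho}^* \neq \emptyset$ and hence a repeated $\rd\lambda$ factor. For $i \in \{1,\ldots,k\}$, the ambient wedge is over the index set $\{0\} \cup (\rho \setminus \{\rho(i)\}) \cup \widetilde{\rho}^*$, which is nonzero precisely when $(\rho \setminus \{\rho(i)\}) \cap \widetilde{\rho}^* = \emptyset$, equivalently $\rho \cap \widetilde{\rho}^* \subseteq \{\rho(i)\}$.

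From this the dichotomy in \eqref{eq:cross-minus} drops out. If $|\rho \cap \widetilde{\rho}^*| \geq 2$ no summand contributes and the product is $0$. Otherwise $\rho \cap \widetilde{\rho}^* = \{\rho(s)\}$ for a unique $s$, and by complementary cardinalities $\rho^* \cap \widetilde{\rho} = \{\widetilde{\rho}(t)\}$ for a unique $t$, so only the $i=s$ summand survives. To evaluate it, I would substitute $\rd\lambda_0 = -\sum_{j=1}^n \rd\lambda_j$: every $j \neq \widetilde{\rho}(t)$ already appears in $\rho \setminus \{\rho(s)\}$ or in $\widetilde{\rho}^*$, so only $j = \widetilde{\rho}(t)$ contributes and carries a sign $-1$. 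Since the increasing sequences $\rho \setminus \{\rho(s)\}$ and $\widetilde{\rho} \setminus \{\widetilde{\rho}(t)\}$ coincide, shifting $\rd\lambda_{\widetilde{\rho}(t)}$ from the front into slot $t$ of $(\rd\lambda)_{\widetilde{\rho}}$ contributes $(-1)^{t-1}$; the definition of $\sigma$ then turns the resulting wedge into $(\rd\lambda)_{\widetilde{\rho}} \wedge (\rd\lambda)_{\widetilde{\rho}^*} = (-1)^{\sigma(\widetilde{\rho})} \operatorname{vol}_T/(n!|T|)$.

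Assembling the four sign sources $(-1)^s$ (from the Whitney coefficient), $-1$ (from expanding $\rd\lambda_0$), $(-1)^{t-1}$ (from the reordering), and $(-1)^{\sigma(\widetilde{\rho})}$ (from the final wedge) yields the exponent $\sigma(\widetilde{\rho})+s+t$; together with $\lambda_{\rho(s)} = \lambda_{\rho \cap \widetilde{\rho}^*}$ this is exactly \eqref{eq:cross-minus}. The main obstacle is purely bookkeeping: tracking these four independent sign contributions without error and justifying cleanly why the expansion $\rd\lambda_0 = -\sum_j \rd\lambda_j$ collapses to a single nonzero term. As a sanity check I would specialize to $k=1$, where $s = t = 1$ and $\sigma(\rho_j) = j-1$, and recover from the formula the off-diagonal entries $\prod_{l=1}^n \lambda_l$ of $M$ used in \eqref{eq:M-1}.
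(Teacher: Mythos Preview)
Your proposal is correct and follows essentially the same approach as the paper's proof: expand $\phi_{(0,\rho)}$ via \eqref{eq:Whitney2}, observe that only the $i=s$ summand survives (the $i=0$ term and the case $|\rho\cap\widetilde{\rho}^*|\ge 2$ vanish by repeated factors), replace $\rd\lambda_0$ by $-\rd\lambda_{\widetilde{\rho}(t)}$ as the only surviving term, identify $\rho\setminus\{\rho(s)\}=\widetilde{\rho}\setminus\{\widetilde{\rho}(t)\}$, and reorder into $(\rd\lambda)_{\widetilde{\rho}}\wedge(\rd\lambda)_{\widetilde{\rho}^*}$. Your sign bookkeeping $(-1)^s\cdot(-1)\cdot(-1)^{t-1}\cdot(-1)^{\sigma(\widetilde{\rho})}=(-1)^{\sigma(\widetilde{\rho})+s+t}$ matches the paper's computation exactly, and your sanity check against \eqref{eq:M-1} is a nice touch the paper omits.
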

\begin{proof}
Since $|\rho| = |\widetilde{\rho}| = k$, $\rho \neq \widetilde{\rho}$,
we easily see that $|\rho \cap \widetilde{\rho}^*| \geq 1$. Moreover,
if $|\rho \cap \widetilde{\rho}^*| \geq 2$, we deduce $\phi_{(0,\rho)}
\wedge (\rd \lambda)_{\widetilde{\rho}^*} = 0$ from equation
\eqref{eq:Whitney2}.

For the case in which $|\rho \cap \widetilde{\rho}^*| = 1$, we see that
$|\rho^* \cap \widetilde{\rho}| = 1$. Notice that $\rho\cap \widetilde{\rho}^*
= \{\rho(s)\}$ and $\rho^* \cap \widetilde{\rho} = \{\widetilde{\rho}(t)\}$, then
$$
\begin{aligned}
\phi_{(0,\rho)} \wedge (\rd\lambda)_{\widetilde{\rho}^*} & = (-1)^s
\lambda_{\rho\cap \widetilde{\rho}^*} \rd \lambda_0 \wedge \cdots \wedge
\widecheck{\rd \lambda_{\rho(s)}} \wedge \cdots \wedge
\rd\lambda_{\rho(k)} \wedge (\rd \lambda)_{\widetilde{\rho}^*} \\
&= (-1)^{s-1}
\lambda_{\rho\cap \widetilde{\rho}^*} \rd \lambda_{\rho^*\cap
  \widetilde{\rho}} \wedge \cdots \wedge \widecheck{\rd \lambda_{\rho(s)}}
  \wedge \cdots \wedge \rd\lambda_{\rho(k)} \wedge (\rd
      \lambda)_{\widetilde{\rho}^*} \\
&= (-1)^{s-1}
\lambda_{\rho\cap \widetilde{\rho}^*} \rd \lambda_{\widetilde{\rho}(t)} \wedge
\cdots \wedge \widecheck{\rd \lambda_{\rho(s)}} \wedge \cdots \wedge
\rd\lambda_{\rho(k)} \wedge (\rd \lambda)_{\widetilde{\rho}^*} \\
&= (-1)^{s+t}
\lambda_{\rho\cap \widetilde{\rho}^*}
(\rd \lambda)_{\widetilde{\rho}}\wedge (\rd \lambda)_{\widetilde{\rho}^*} \\
& = (-1)^{\sigma(\widetilde{\rho}) + s + t} \lambda_{\rho\cap
\widetilde{\rho}^*} \frac{\operatorname{vol}_T}{n!|T|}.
\end{aligned}
$$
This completes the proof.
\end{proof}
\begin{corollary} \label{cor:symmetry-minus}
For any $\rho, \widetilde{\rho} \in \Sigma(k,n)$, it holds that
\begin{equation}\label{eq:symmetry-minus}
\lambda_{\rho^*} \phi_{(0,\rho)} \wedge (-1)^{\sigma(\widetilde{\rho})}
(\rd \lambda)_{\widetilde{\rho}^*} =
\lambda_{\widetilde{\rho}^*}\phi_{(0,\widetilde{\rho})}
\wedge (-1)^{\sigma(\rho)} (\rd \lambda)_{\rho^*}.
\end{equation}
\end{corollary}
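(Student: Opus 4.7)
The plan is to establish \eqref{eq:symmetry-minus} by a case analysis based on the cardinality of $\rho \cap \widetilde{\rho}^*$, reducing everything to Lemma~\ref{lm:cross-minus} together with a short set-theoretic identity. The case $\rho = \widetilde{\rho}$ is immediate since both sides of \eqref{eq:symmetry-minus} then coincide literally.

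Suppose $\rho \neq \widetilde{\rho}$. Since $|\rho| = |\widetilde{\rho}| = k$, we have $|\rho \cap \widetilde{\rho}^*| = k - |\rho \cap \widetilde{\rho}| = |\widetilde{\rho} \cap \rho^*|$. Hence $|\rho \cap \widetilde{\rho}^*| \geq 2$ if and only if $|\widetilde{\rho} \cap \rho^*| \geq 2$; in that subcase Lemma~\ref{lm:cross-minus} yields $\phi_{(0,\rho)} \wedge (\rd\lambda)_{\widetilde{\rho}^*} = 0$ and $\phi_{(0,\widetilde{\rho})} \wedge (\rd\lambda)_{\rho^*} = 0$, so both sides of \eqref{eq:symmetry-minus} vanish after multiplying by the appropriate $\lambda_{\rho^*}$, $\lambda_{\widetilde{\rho}^*}$ and signs.

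The remaining case is $|\rho \cap \widetilde{\rho}^*| = |\widetilde{\rho} \cap \rho^*| = 1$; write $\rho \cap \widetilde{\rho}^* = \{\rho(s)\}$ and $\rho^* \cap \widetilde{\rho} = \{\widetilde{\rho}(t)\}$. Applying Lemma~\ref{lm:cross-minus} directly, and noting that the two factors of $(-1)^{\sigma(\widetilde{\rho})}$ on the left (resp.\ $(-1)^{\sigma(\rho)}$ on the right) cancel, the left- and right-hand sides of \eqref{eq:symmetry-minus} reduce to
$$
(-1)^{s+t}\,\lambda_{\rho^*}\lambda_{\rho(s)}\,\frac{\operatorname{vol}_T}{n!|T|} \quad \text{and} \quad (-1)^{s+t}\,\lambda_{\widetilde{\rho}^*}\lambda_{\widetilde{\rho}(t)}\,\frac{\operatorname{vol}_T}{n!|T|},
$$
respectively. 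It therefore suffices to verify the monomial identity $\lambda_{\rho^*}\lambda_{\rho(s)} = \lambda_{\widetilde{\rho}^*}\lambda_{\widetilde{\rho}(t)}$.

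This final identity reduces to the set equality $\mathcal{R}(\rho^*) \cup \{\rho(s)\} = \mathcal{R}(\widetilde{\rho}^*) \cup \{\widetilde{\rho}(t)\}$. The hypothesis $\rho \cap \widetilde{\rho}^* = \{\rho(s)\}$ says that $\widetilde{\rho}$ is obtained from $\rho$ by deleting $\rho(s)$ and inserting $\widetilde{\rho}(t) \in \rho^*$; taking complements yields $\widetilde{\rho}^* = (\rho^* \setminus \{\widetilde{\rho}(t)\}) \cup \{\rho(s)\}$, and adjoining $\widetilde{\rho}(t)$ to both sides gives the required set identity. The only delicate point is keeping careful track of the sign exponents coming out of Lemma~\ref{lm:cross-minus}; once those $\sigma$-signs cancel cleanly, the rest is purely combinatorial.
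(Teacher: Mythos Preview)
Your proof is correct and follows essentially the same approach as the paper's: the same three-case split on $|\rho\cap\widetilde{\rho}^*|$, the same invocation of Lemma~\ref{lm:cross-minus}, and the same reduction of the nontrivial case to the set identity $\rho^*\cup\{\rho(s)\}=\widetilde{\rho}^*\cup\{\widetilde{\rho}(t)\}$ (which the paper records as $\lambda_{\rho^*}\lambda_{\rho\cap\widetilde{\rho}^*}=\lambda_{\rho^*\cup\widetilde{\rho}^*}$). Your version is somewhat more explicit---e.g., you justify why $|\rho\cap\widetilde{\rho}^*|\geq 2$ forces $|\widetilde{\rho}\cap\rho^*|\geq 2$ and you spell out the sign cancellation---but there is no substantive difference in strategy.
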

\begin{proof}
We verify the statement case by case:
\begin{itemize}
\item $\rho = \widetilde{\rho}$: obvious.
\item $|\rho \cap \widetilde{\rho}^*| \geq 2$: both left hand side (LHS) and right hand side (RHS) are zero.
\item $|\rho \cap \widetilde{\rho}^*| = 1$: Recall that $\rho \cap \widetilde{\rho}^* =
\{\rho(s)\}, \rho^* \cap \widetilde{\rho} = \{\widetilde{\rho}(t)\}$, by
\eqref{eq:cross-minus},
$$
\text{LHS } = (-1)^{s+t}\lambda_{\rho^*} \lambda_{\rho \cap
\widetilde{\rho}^*} \frac{\operatorname{vol}_T}{n!|T|}
= (-1)^{s+t}\lambda_{\rho^* \cup \widetilde{\rho}^*}
\frac{\operatorname{vol}_T}{n!|T|} =
\text{RHS}.
$$
\end{itemize}
This completes the proof.
\end{proof}

\subsection{Proof of Lemma~\ref{lm:quasi-minus}}
Let us consider a differential form  $\omega \in \mathring{\mathcal P}_{r}^-\Lambda^k(T)$. By
Theorem \ref{tm:trace-free-minus}, we can write $\omega$ as
$$
\omega = \sum_{\rho\in \Sigma(k,n)} a_\rho \lambda_{\rho^*}
\phi_{(0,\rho)},
$$
where $a_\rho \in \mathcal{P}_{r+k-n-1}(T)$.  We take a special test form $\eta$
in \eqref{eq:eP-minus-dofT} as
$$
\eta = \sum_{\rho\in \Sigma(k,n)} a_\rho (-1)^{\sigma(\rho)} (\rd
    \lambda)_{\rho^*} \in \mathcal{P}_{r+k-n-1}\Lambda^{n-k}(T).
$$
By Lemma~\ref{lm:equal-minus} and Lemma~\ref{lm:cross-minus}, we have
\begin{equation} \label{eq:omega-eta-minus}
\omega \wedge \eta = \sum_{\rho, \widetilde{\rho} \in \Sigma(k,n)} a_\rho
a_{\widetilde{\rho}} m_{\rho \widetilde{\rho}}
\frac{\operatorname{vol}_T}{n!|T|},
\end{equation}
where $M = (m_{\rho \widetilde{\rho}})$ is symmetric (by Corollary
\ref{cor:symmetry-minus}) and
\begin{equation} \label{eq:M-minus}
m_{\rho\widetilde{\rho}} =
\left\{
\begin{aligned}
\lambda_{\rho^*} (\lambda_0 + \sum_{i=1}^k\lambda_{\rho(i)}) & \quad
\text{if } \rho = \widetilde{\rho}, \\
(-1)^{s+t} \lambda_{\rho^* \cup \widetilde{\rho}^*} &
\quad \text{if } \rho \cap \widetilde{\rho}^* = \{\rho(s)\}, \rho^* \cap
\widetilde{\rho} = \{\widetilde{\rho}(t)\}, \\
0 & \quad \text{otherwise}.
\end{aligned}
\right.
\end{equation}

Recall that the exterior algebra ${\rm Alt}^k \mathbb{R}^n$ has as a
basis $\mu_{\rho}' := \mu_{\rho(1)}' \wedge \mu_{\rho(2)}' \wedge
\cdots \wedge \mu_{\rho(k)}'$ for $\rho \in \Sigma(k,n)$, where
$\mu_i$ is the orthogonal basis of $\mathbb{R}^n$. Further, the inner
product on ${\rm Alt}^k\mathbb{R}^n$ is defined as (c.f. \cite[pp.~11]{2006ArnoldFalkWinther-a})
\begin{equation} \label{eq:inner-alt}
\langle \omega, \eta \rangle_{{\rm Alt}^k\mathbb{R}^n} = \sum_{\rho \in
\Sigma(k,n)} \omega(\mu_{\rho(1)}, \ldots, \mu_{\rho(k)})
\eta(\mu_{\rho(1)}, \ldots, \mu_{\rho(k)}).
\end{equation}
We have the following result which generalizes~\eqref{eq:atma1} and \eqref{eq:atma2} for arbitrary $n$ and $k$.
\begin{lemma} \label{lm:key-minus}
Let $m_{\rho\widetilde{\rho}}$ be given in \eqref{eq:M-minus}. Then,
\begin{equation} \label{eq:key-minus}
\sum_{\rho, \widetilde{\rho} \in \Sigma(k,n)} a_\rho
a_{\widetilde{\rho}} m_{\rho \widetilde{\rho}} =
\lambda_0 \left( \sum_{\rho \in \Sigma(k,n)}  a_\rho^2
\lambda_{\rho^*} \right)
 + \langle \theta, \theta\rangle_{{\rm Alt}^{n-k+1}\mathbb{R}^n},
\end{equation}
where
$$
\theta = \sum_{\rho \in \Sigma(k,n)} (-1)^{\sigma(\rho)} a_\rho  \sum_{i=1}^k
\sqrt{\lambda_{\rho^*}\lambda_{\rho(i)}} \mu_{\rho^*}' \wedge \mu_{\rho(i)}'.
$$
\end{lemma}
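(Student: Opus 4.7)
My plan is to verify the identity \eqref{eq:key-minus} by expanding $\langle \theta,\theta\rangle$ via bilinearity and matching terms with $\sum_{\rho,\widetilde\rho}a_\rho a_{\widetilde\rho}m_{\rho\widetilde\rho}$. The opening observation is that the diagonal entry $m_{\rho\rho}=\lambda_{\rho^*}(\lambda_0+\sum_{i=1}^k\lambda_{\rho(i)})$ splits into two pieces: the $\lambda_0$ piece gives precisely the first term $\lambda_0\sum_\rho a_\rho^2\lambda_{\rho^*}$ on the right-hand side, so it suffices to show that $\langle\theta,\theta\rangle$ reproduces the residual diagonal contribution $\sum_\rho a_\rho^2\sum_{i=1}^k\lambda_{\rho^*}\lambda_{\rho(i)}$ together with all the nonzero off-diagonal entries of $M$ in \eqref{eq:M-minus}.

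Expanding $\langle\theta,\theta\rangle$ produces a quadruple sum over $(\rho,i,\widetilde\rho,j)$ with coefficient
\[
(-1)^{\sigma(\rho)+\sigma(\widetilde\rho)}\,a_\rho a_{\widetilde\rho}\,\sqrt{\lambda_{\rho^*}\lambda_{\rho(i)}\lambda_{\widetilde\rho^*}\lambda_{\widetilde\rho(j)}}\,\bigl\langle \mu_{\rho^*}'\wedge\mu_{\rho(i)}',\,\mu_{\widetilde\rho^*}'\wedge\mu_{\widetilde\rho(j)}'\bigr\rangle.
\]
By the definition of $\langle\cdot,\cdot\rangle$ in \eqref{eq:inner-alt}, this inner product vanishes unless the two wedges live over the same sorted index set, i.e.\ $\mathcal{R}(\rho^*)\cup\{\rho(i)\}=\mathcal{R}(\widetilde\rho^*)\cup\{\widetilde\rho(j)\}$. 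A short combinatorial check shows this forces either (a) $\rho=\widetilde\rho$ and $i=j$, in which case the inner product equals $1$ and the four $\lambda$'s collapse to $\lambda_{\rho^*}\lambda_{\rho(i)}$, reproducing exactly the residual diagonal term; or (b) $\rho\neq\widetilde\rho$ with $\rho\cap\widetilde\rho^*=\{\rho(i)\}$ and $\rho^*\cap\widetilde\rho=\{\widetilde\rho(j)\}$, which is precisely the indexing of the nonzero off-diagonal entries in \eqref{eq:M-minus} with $s=i$, $t=j$. In case (b), $|\rho\cap\widetilde\rho|=k-1$ implies that the common index set equals $\mathcal{R}(\rho^*\cup\widetilde\rho^*)$, so both products under the square root equal $\lambda_{\rho^*\cup\widetilde\rho^*}$.

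The crux of the proof is the sign computation in case (b). I would introduce the common sorted multi-index $\tau\in\Sigma(n-k+1,n)$ with $\mathcal{R}(\tau)=\mathcal{R}(\rho^*)\cup\{\rho(i)\}$, let $\nu\in\Sigma(k-1,n)$ be its complement, and write $\mu_{\rho^*}'\wedge\mu_{\rho(i)}'=\epsilon_1\,\mu_\tau'$, $\mu_{\widetilde\rho^*}'\wedge\mu_{\widetilde\rho(j)}'=\epsilon_2\,\mu_\tau'$ with $\epsilon_1,\epsilon_2\in\{\pm 1\}$. Combining the elementary identity $\mu_\rho'=(-1)^{k-i}\mu_\nu'\wedge\mu_{\rho(i)}'$ with the standard relation $\mu_\rho'\wedge\mu_{\rho^*}'=(-1)^{\sigma(\rho)}\mu_1'\wedge\cdots\wedge\mu_n'$ (and the analogous one for $\nu$) yields $\epsilon_1\equiv(-1)^{\sigma(\rho)+\sigma(\nu)+n-i}$ and likewise $\epsilon_2\equiv(-1)^{\sigma(\widetilde\rho)+\sigma(\nu)+n-j}$. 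Crucially, the \emph{same} $\nu$ appears in both, so the $\sigma(\nu)$ contributions cancel in $\epsilon_1\epsilon_2$; combined with the $(-1)^{\sigma(\rho)+\sigma(\widetilde\rho)}$ prefactor from $\theta$, the total sign reduces to $(-1)^{i+j}=(-1)^{s+t}$, matching \eqref{eq:M-minus} exactly. This cancellation of the $\sigma(\nu)$ terms is where I expect the main obstacle to lie, as it requires that both $\epsilon_1$ and $\epsilon_2$ be referenced against a single common sorted basis element. Once this is nailed down, assembling the contributions from cases (a) and (b) produces the identity \eqref{eq:key-minus}.
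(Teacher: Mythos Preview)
Your proposal is correct and follows essentially the same route as the paper: expand $\langle\theta,\theta\rangle$ by bilinearity, observe that the inner product of two summands is nonzero only when $\rho=\widetilde\rho,\ i=j$ (recovering the residual diagonal $\lambda_{\rho^*}\sum_i\lambda_{\rho(i)}$) or when $|\rho\cap\widetilde\rho^*|=1$ (recovering the off-diagonal $m_{\rho\widetilde\rho}$), and then verify the sign $(-1)^{s+t}$ in the latter case. Your sign computation via the auxiliary sorted index $\tau$ and its complement $\nu$, with the cancellation of $\sigma(\nu)$ in $\epsilon_1\epsilon_2$, is a more explicit version of the paper's terser argument that ``the common indices contribute equally in counting both $\sigma(\rho)$ and $\sigma(\widetilde\rho)$''; the content is the same.
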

\begin{proof}
We write
$$
\begin{aligned}
& \qquad \langle\theta, \theta\rangle_{{\rm Alt}^{k+1}\mathbb{R}^n}
\\
&  = \Big\langle
\sum_{\rho \in \Sigma(k,n)} (-1)^{\sigma(\rho)} a_\rho \sum_{i=1}^k
\sqrt{\lambda_{\rho^*}\lambda_{\rho(i)}} \mu_{\rho^*}' \wedge
\mu_{\rho(i)}', \\
& \qquad \sum_{\widetilde{\rho} \in \Sigma(k,n)} (-1)^{\sigma(\widetilde\rho)}
a_{\widetilde{\rho}}  \sum_{j = 1}^k
\sqrt{\lambda_{\widetilde{\rho}^*}\lambda_{\widetilde{\rho}(j)} }
\mu_{\widetilde{\rho}^*}' \wedge \mu_{\widetilde{\rho}(j)}' \Big\rangle_{{\rm
Alt}^{n-k+1}\mathbb{R}^n}.
\end{aligned}
$$
Next, we verify \eqref{eq:key-minus} for each component:
\begin{itemize}
\item $\rho = \widetilde{\rho}$, the coefficient of $a_{\rho}^2$ is
$$
\begin{aligned}
& \quad
\Big\langle
\sum_{i=1}^k
\sqrt{\lambda_{\rho^*}\lambda_{\rho(i)}} \mu_{\rho^*}' \wedge \mu_{\rho(i)}',
\sum_{j=1}^k
\sqrt{\lambda_{\rho^*}\lambda_{\rho(j)}} \mu_{\rho^*}' \wedge \mu_{\rho(j)}'
\Big\rangle_{{\rm Alt}^{n-k+1}\mathbb{R}^n} \\
& =  \lambda_{\rho^*} \sum_{i=1}^k
\lambda_{\rho(i)}.
\end{aligned}
$$
\item $|\rho \cap \widetilde{\rho}^*| \geq 2$: the coefficient of $a_{\rho}a_{\widetilde{\rho}}$ is obviously zero.
\item $|\rho \cap \widetilde{\rho}^*| = 1$: Recalling that $\rho \cap \widetilde{\rho}^* =
\{\rho(s)\}, \rho^* \cap \widetilde{\rho} = \{\widetilde{\rho}(t)\}$, we find
the coefficient of $2a_{\rho} a_{\widetilde\rho}$ as
$$
\begin{aligned}
& \quad
(-1)^{\sigma(\rho) + \sigma(\widetilde\rho)}
\Big\langle
\sum_{i=1}^k
\sqrt{\lambda_{\rho^*}\lambda_{\rho(i)}} \mu_{\rho^*}' \wedge \mu_{\rho(i)}',
\sum_{j=1}^k
\sqrt{\lambda_{\widetilde{\rho}^*}\lambda_{\widetilde{\rho}(j)}} \mu_{\widetilde{\rho}^*}' \wedge \mu_{\widetilde{\rho}(j)}'
\Big\rangle_{{\rm Alt}^{n-k+1}\mathbb{R}^n} \\
& = \lambda_{\rho^* \cup \widetilde{\rho}^*}  (-1)^{\sigma(\rho) + \sigma(\widetilde{\rho})}  \langle \mu_{\rho(s)}' \wedge \mu_{\rho^*}',
\mu_{\widetilde{\rho}(t)}' \wedge \mu_{\widetilde{\rho}^*}' \rangle_{{\rm Alt}^{n-k+1}\mathbb{R}^n}
=  (-1)^{s+t} \lambda_{\rho^* \cup \widetilde{\rho}^*}.
\end{aligned}
$$
Here, in the last step, we have used
$$
\begin{aligned}
\rho,\rho^* &= \cdots \rho(s) \cdots, \cdots \widetilde{\rho}(t)\cdots \\
\widetilde{\rho},\widetilde{\rho}^* &= \cdots \widetilde{\rho}(t)\cdots, \cdots \rho(s)\cdots
\end{aligned}
$$
where $\cdots$ represent the common indices, which contribute equally in counting
both $\sigma(\rho)$ and $\sigma(\widetilde{\rho})$.
\end{itemize}
Combining the three cases above and using \eqref{eq:M-minus}, we obtain \eqref{eq:key-minus}.
\end{proof}

The main result in this section is shown next.
\begin{proof}[Proof of Lemma \ref{lm:quasi-minus}]
From \eqref{eq:omega-eta-minus} and \eqref{eq:key-minus}, it is
obvious that the coefficient function of $\operatorname{vol}_T$ for
$\omega \wedge \eta$  does not change sign in $\bar{T}$, as $\lambda_i
\geq 0$. By Lemma \ref{lm:no-change-sign} and equation
\eqref{eq:key-minus}, we have
$$
\lambda_0 \sum_{\rho \in \Sigma(k,n)} a_\rho^2 \lambda_{\rho^*} \equiv 0.
$$
In the interior of $T$, we have $\lambda_i > 0$, $i = 0,\ldots,n$,
which implies that $a_\rho = 0$.
\end{proof}

%%%%%%%%%%%%%%%%%%%%%%%%%%%%%%%%%%%%%%%%%%%%%%%%%%%%%%%%%%%%%%%%%%%%%%%%%%%%%%%%%%
\section{Unisolvence for the quasi-polynomial spaces of second kind}\label{s:second-kind}
We now prove the unisolvence of the degrees of freedom for quasi-polynomials derived from polynomial
Hilbert complexes of second kind which are defined as
\begin{equation} \label{eq:eP}
e\mathcal{P}_r\Lambda^k(T) := \{
e(\bm{x}) \omega ~|~ \omega \in \mathcal{P}_r\Lambda^k(T)
\}.
\end{equation}

\subsection{Geometrical decomposition of the polynomial spaces of second kind}
We now consider the geometrical decomposition of the polynomial spaces
associated with the Hilbert complexes of second kind: $\mathcal{P}_r\Lambda^k(T)^*$ and
$\mathcal{P}_r\Lambda^k(T)$. We point out that there is a little (if any) analogy in
the proofs for these spaces.

Following~\cite[Section 4.5]{2006ArnoldFalkWinther-a}, the degrees of freedom for
$\mathcal{P}_r\Lambda^k(T)$ are
\begin{equation}\label{eq:dof}
\int_f \Tr_f \omega \wedge \eta, \quad \eta \in \mathcal{P}_{r+k-\dim
f}^-\Lambda^{\dim f - k}(f), \quad f \in \Delta(T).
\end{equation}
Further, the characterization of the trace free part of the
$\mathcal{P}_r\Lambda^k(T)$ is stated below.
\begin{theorem}[Theorem 4.22 in \cite{2006ArnoldFalkWinther-a}]
\label{tm:trace-free}
For $1\leq k \leq n$, $r \geq n+1-k$, the map
\begin{equation}\label{eq:trace-free}
\sum_{\rho \in \Sigma_0(n-k,n)} a_\rho \phi_\rho \mapsto
\sum_{\rho \in \Sigma_0(n-k,n)} a_\rho \lambda_{\rho}
(\rd\lambda)_{\rho^*},
\end{equation}
where the $a_\rho = a_\rho(\lambda_{\rho(0)}, \lambda_{\rho(0)+1},
\ldots, \lambda_n) \in \mathcal{P}_{r+k-n-1}(T)$, defines an
isomorphism of $\mathcal{P}_{r+k-n}^-\Lambda^{n-k}(T)$ onto
$\mathring{\mathcal{P}}_r\Lambda^k(T)$.
\end{theorem}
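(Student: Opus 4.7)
The plan is to verify the map is well-defined (lands in $\mathring{\mathcal P}_r\Lambda^k(T)$), is injective, and that the two spaces have equal dimension, so that the isomorphism follows. First, for well-definedness, each summand $a_\rho\lambda_\rho(\rd\lambda)_{\rho^*}$ is a $k$-form of total polynomial degree $r$: $\deg a_\rho=r+k-n-1$, $\lambda_\rho=\prod_{i=0}^{n-k}\lambda_{\rho(i)}$ has degree $n-k+1$, and $|\rho^*|=k$. To check that the trace vanishes on every proper subsimplex $f_\tau\subsetneq T$, observe that $\Tr_{f_\tau}\lambda_i=0$ whenever $i\notin\mathcal R(\tau)$, whence also $\Tr_{f_\tau}\rd\lambda_i=\rd(\Tr_{f_\tau}\lambda_i)=0$. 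A nonzero trace of $\lambda_\rho(\rd\lambda)_{\rho^*}$ on $f_\tau$ would therefore require $\mathcal R(\rho)\cup\mathcal R(\rho^*)\subseteq\mathcal R(\tau)$; since by definition $\mathcal R(\rho)\cup\mathcal R(\rho^*)=\{0,\ldots,n\}$, this forces $f_\tau=T$, and the trace vanishes term by term on $\partial T$.

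The main content is injectivity. The $k$-forms $(\rd\lambda)_{\rho^*}$ indexed by $\rho^*\in\Sigma_0(k-1,n)$ are not linearly independent: there are $\binom{n+1}{k}$ of them but only $\binom{n}{k}$ independent $k$-forms, because of the single relation $\sum_{i=0}^n\rd\lambda_i=0$. The restriction that $a_\rho$ depends only on $\lambda_j$ for $j\ge\rho(0)$ is precisely what selects a canonical representative compensating for this redundancy. I would proceed by eliminating $\rd\lambda_0$ via the sum relation to express every $(\rd\lambda)_{\rho^*}$ in the standard basis $\{(\rd\lambda)_\sigma\}_{\sigma\in\Sigma(k,n)}$ of $\mathrm{Alt}^k\mathbb R^n$, then, supposing $\sum_\rho a_\rho\lambda_\rho(\rd\lambda)_{\rho^*}=0$, collect the coefficient of each $(\rd\lambda)_\sigma$. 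Expanding each $a_\rho$ in a Bernstein-type monomial basis restricted to its allowed variables, the resulting system should be triangular with respect to an ordering on $\rho(0)$, so each $a_\rho$ can be peeled off inductively.

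Finally, I would use the standard dimension formulas from \cite{2006ArnoldFalkWinther-a} to verify that $\dim\mathcal P_{r+k-n}^-\Lambda^{n-k}(T)=\dim\mathring{\mathcal P}_r\Lambda^k(T)$ under the hypothesis $r\ge n+1-k$, which combined with injectivity yields the isomorphism. The hardest step is injectivity, and the main obstacle is the asymmetric restriction on $a_\rho$ coupled with the single relation $\sum\rd\lambda_i=0$: unlike in Theorem \ref{tm:trace-free-minus}, where the Whitney forms $\phi_{(0,\rho)}$ on the image side already internalize the relations by construction, here the raw $(\rd\lambda)_{\rho^*}$ carry redundancy, and the variable restriction on $a_\rho$ must be leveraged carefully---essentially exhibiting a Bernstein-type canonical form for elements of $\mathring{\mathcal P}_r\Lambda^k(T)$---to force the triangular elimination through.
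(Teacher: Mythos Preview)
This theorem is not proved in the present paper; it is quoted from \cite{2006ArnoldFalkWinther-a} (their Theorem~4.22) and invoked as a tool in Section~\ref{s:second-kind}. There is therefore no in-paper proof to compare your proposal against.

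That said, your outline is structurally sound. The well-definedness argument is correct and complete: each summand has total polynomial degree $r$, and since $\mathcal R(\rho)\cup\mathcal R(\rho^*)=\{0,\ldots,n\}$, every proper face $f_\tau$ kills at least one factor $\lambda_i$ or one $\rd\lambda_i$ in each term. The dimension count by citation is appropriate. For injectivity, your diagnosis is right---the variable restriction $a_\rho=a_\rho(\lambda_{\rho(0)},\ldots,\lambda_n)$ compensates exactly for the single linear relation $\sum_i\rd\lambda_i=0$ among the forms $(\rd\lambda)_{\rho^*}$---and induction on $\rho(0)$ is the natural mechanism (indeed the paper itself uses an induction on $\rho(0)$ in the proof of Lemma~\ref{lm:quasi} to conclude $a_\rho=0$ from a related identity). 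However, you have described a strategy rather than executed it: to make the triangular elimination go through you still need to specify the ordering, exhibit the leading coefficient at each step, and verify that the variable restriction on $a_\rho$ is precisely what prevents cancellation against terms with smaller $\rho(0)$. That bookkeeping is the actual content of the injectivity proof, and it is not yet on the page.
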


\subsection{Polynomials of second kind with vanishing trace}
We now state the main result of this section showing
unisolvence of the functionals used for degrees of freedom \eqref{eq:dof}.
\begin{lemma}\label{lm:quasi}
Let $\omega \in \mathring{\mathcal{P}}_r \Lambda^k(T)$. Suppose that
\begin{equation} \label{eq:eP-dofT}
\int e(\bm{x}) \omega \wedge \eta = 0, \qquad \eta \in
\mathcal{P}_{r-n+k}^-\Lambda^{n-k}(T).
\end{equation}
Then $\omega = 0$.
\end{lemma}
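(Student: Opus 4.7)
The plan is to mirror the strategy of Lemma~\ref{lm:quasi-minus}, but with the roles of Whitney forms and algebraic forms \emph{swapped}: Theorem~\ref{tm:trace-free} parametrizes $\mathring{\mathcal P}_r\Lambda^k(T)$ by combinations of the algebraic $k$-forms $\lambda_\rho(\rd\lambda)_{\rho^*}$, whereas the test space $\mathcal{P}_{r-n+k}^-\Lambda^{n-k}(T)$ is naturally spanned by Whitney $(n-k)$-forms $\phi_\rho$. So $\omega$ and $\eta$ exchange the ``Whitney'' and ``algebraic'' roles they played in the first-kind proof.

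Using Theorem~\ref{tm:trace-free} I would write
\begin{equation*}
\omega = \sum_{\rho \in \Sigma_0(n-k,n)} a_\rho \lambda_\rho (\rd\lambda)_{\rho^*}, \qquad a_\rho \in \mathcal{P}_{r+k-n-1}(T),
\end{equation*}
(subject to the dependence constraint in that theorem) and pick the test form
\begin{equation*}
\eta = \sum_{\rho \in \Sigma_0(n-k,n)} \epsilon_\rho \, a_\rho \phi_\rho \in \mathcal{P}_{r-n+k}^-\Lambda^{n-k}(T),
\end{equation*}
with signs $\epsilon_\rho=\pm 1$ chosen so that the resulting ``mass'' matrix is symmetric, in analogy with Corollary~\ref{cor:symmetry-minus}. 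The next step is to establish analogs of Lemma~\ref{lm:equal-minus} and Lemma~\ref{lm:cross-minus} for the wedge product $\lambda_\rho(\rd\lambda)_{\rho^*} \wedge \phi_{\widetilde\rho}$. Expanding $\phi_{\widetilde\rho}$ by \eqref{eq:Whitney} and counting inversions exactly as in those lemmas should give vanishing when $|\rho \cap \widetilde\rho^*|\ge 2$, a signed barycentric monomial times $\operatorname{vol}_T/(n!|T|)$ when $|\rho \cap \widetilde\rho^*|=1$, and an explicit nonnegative expression on the diagonal $\rho=\widetilde\rho$. Substituting into the identity~\eqref{eq:eP-dofT} then reduces it to the integral of $e(\bm{x})\,\bm{a}^T M \bm{a}\,\operatorname{vol}_T/(n!|T|)$, with a symmetric matrix $M$ built from barycentric monomials.

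The crux is a sum-of-squares identity for $\bm{a}^T M \bm{a}$, analogous to Lemma~\ref{lm:key-minus}. I would introduce a form
\begin{equation*}
\theta = \sum_{\rho \in \Sigma_0(n-k,n)} \epsilon_\rho \, a_\rho \sum_{i=0}^{n-k} \sqrt{\lambda_\rho\,\lambda_{\rho(i)}}\,\mu_{\rho^*}' \wedge \mu_{\rho(i)}' \in \mathrm{Alt}^{k+1}\mathbb{R}^n
\end{equation*}
and verify case by case (diagonal, $|\rho\cap\widetilde\rho^*|\ge 2$, $|\rho\cap\widetilde\rho^*|=1$) that $\langle\theta,\theta\rangle$ absorbs exactly the off-diagonal cross terms, leaving a strictly positive diagonal ``anchor'' that is a positive combination of the $a_\rho^2$ weighted by products of barycentric coordinates. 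Positivity of $e(\bm{x})$, Lemma~\ref{lm:no-change-sign}, and strict positivity of every $\lambda_i$ on the open simplex would then force each $a_\rho\equiv 0$, and hence $\omega=0$.

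The main obstacle I anticipate is the combinatorial bookkeeping, which is more delicate than in the first-kind case because the index set $\Sigma_0(n-k,n)$ treats the vertex $0$ on an equal footing with the other vertices. In particular, the clean ``$\lambda_0\sum a_\rho^2\lambda_{\rho^*}$'' separation used in \eqref{eq:atma1}--\eqref{eq:atma2} and Lemma~\ref{lm:key-minus} is not immediately available, so one likely needs a more symmetric anchor such as $\sum_{j=0}^n \lambda_j\bigl(\sum_{\rho\,:\,j\notin\rho}a_\rho^2\,\lambda_{\rho}\bigr)$ or a slightly modified $\theta$. The dependence restriction on $a_\rho$ from Theorem~\ref{tm:trace-free} should not interfere, because the final conclusion is pointwise in the interior of $T$, where every barycentric coordinate is strictly positive.
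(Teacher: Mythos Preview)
Your overall plan---swap the roles of the Whitney and algebraic bases, compute the resulting mass matrix, and exhibit a sum-of-squares identity---matches the paper's proof. The analogues of Lemma~\ref{lm:equal-minus} and Lemma~\ref{lm:cross-minus} and the symmetry corollary carry over essentially as you describe (the paper does this in Lemma~\ref{lm:equal}, Lemma~\ref{lm:cross}, Corollary~\ref{cor:symmetry}). One small correction: because $\rho$ now ranges over $\Sigma_0(n-k,n)$, the indices run through $\{0,\ldots,n\}$ and the auxiliary form $\theta$ must live in $\mathrm{Alt}^{k+1}\mathbb{R}^{n+1}$, not $\mathrm{Alt}^{k+1}\mathbb{R}^{n}$.

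The genuine gap is in your endgame. You expect to isolate a strictly positive ``anchor'' term plus $\langle\theta,\theta\rangle$, and then conclude $a_\rho=0$ pointwise in the interior. In the second-kind case that anchor does \emph{not} exist: the identity the paper proves is exactly
\[
\sum_{\rho,\widetilde\rho} a_\rho a_{\widetilde\rho}\, m_{\rho\widetilde\rho} \;=\; \langle\theta,\theta\rangle_{\mathrm{Alt}^{k+1}\mathbb{R}^{n+1}},
\]
with no leftover diagonal piece (compare the diagonal entry $m_{\rho\rho}=\lambda_\rho\sum_{i}\lambda_{\rho(i)}$ with what $\langle\theta,\theta\rangle$ contributes). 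So the best you get from Lemma~\ref{lm:no-change-sign} is $\theta\equiv 0$, i.e.\ a system of linear relations among the $a_\rho\sqrt{\lambda_\rho\lambda_{\rho(i)}}$. This system has a nontrivial null space if you ignore the special form of the $a_\rho$: already for $n=2$, $k=1$, the choice $a_{\{0,1\}}=\lambda_2$, $a_{\{0,2\}}=\lambda_1$, $a_{\{1,2\}}=\lambda_0$ kills $\theta$ identically. Hence your proposed route---``strict positivity in the interior forces each $a_\rho=0$''---cannot work as stated, and no symmetric anchor of the type you suggest will rescue it, since the off-diagonal structure forces $\theta$ and then the diagonal contribution of $\langle\theta,\theta\rangle$ exhausts $m_{\rho\rho}$.

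What closes the argument in the paper is precisely the piece you set aside as irrelevant: the dependence restriction $a_\rho=a_\rho(\lambda_{\rho(0)},\lambda_{\rho(0)+1},\ldots,\lambda_n)$ from Theorem~\ref{tm:trace-free}. From $\theta\equiv 0$ the paper runs an induction on $\rho(0)$: for each basis element $\mu'_{\rho^*\cup\{\rho(0)\}}$ one collects the coefficient, restricts to the face $f=[x_{\rho(0)},\ldots,x_n]$ so that the competing terms vanish (either because $\lambda_{\widetilde\rho}|_f=0$ for $\widetilde\rho(0)<\rho(0)$, or because $a_{\widetilde\rho}=0$ by the inductive hypothesis for $\widetilde\rho(0)>\rho(0)$), and then uses that $a_\rho$ depends only on the barycentric coordinates of $f$ together with the injectivity of the extension $E_{f,T}$ to promote $a_\rho|_f=0$ to $a_\rho=0$. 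In the $n=2$, $k=1$ example above, this is exactly what rules out $a_{\{1,2\}}=\lambda_0$. So the dependence restriction is not a nuisance that ``should not interfere''; it is the mechanism that replaces the missing anchor.
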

Similar to Theorem \ref{tm:quasi-minus}, the induction argument (from
sub-simplices of lower dimension a sub-simplex of higher dimension)
shows that the degrees of freedom in~\eqref{eq:dof} are a
unisolvent set for $e\mathcal{P}_r^-\Lambda^k(T)$, which is stated in
the following theorem.

\begin{theorem} \label{tm:quasi}
Let $0 \leq k \leq n$, $r \geq 1$. Suppose that $\omega \in
\mathcal{P}_r\Lambda^k(T)$ satisfies
$$
\int_f \mathrm{Tr}_f (e(\bm{x}) \omega) \wedge \eta = 0, \quad \eta
\in \mathcal{P}_{r+k-\mathrm{dim}f}^-\Lambda^{\mathrm{dim}f-k}(f),
  \quad f \in \Delta(T).
$$
Then $\omega = 0$.
\end{theorem}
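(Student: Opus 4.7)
The plan is to mirror the proof of Theorem \ref{tm:quasi-minus} almost verbatim, with Lemma \ref{lm:quasi} playing the role of Lemma \ref{lm:quasi-minus}, proceeding by induction on the dimension $\ell$ of the sub-simplex from $\ell = k$ up to $\ell = n$. The key structural identity that makes the induction possible is that the trace commutes with multiplication by the scalar weight, $\mathrm{Tr}_f(e(\bm{x})\omega) = e(\bm{x})\mathrm{Tr}_f\omega$, so the hypothesis of the theorem transfers cleanly to $\mathrm{Tr}_f\omega$ on each sub-simplex.

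For the base case $\ell = k$, I would fix $f \in \Delta_k(T)$ and observe that $\mathrm{Tr}_f\omega$ is a $k$-form on the $k$-dimensional simplex $f$, so its further restriction to any face of $\partial f$ vanishes automatically (a $k$-covector on a space of dimension $k-1$ is zero). Hence $\mathrm{Tr}_f\omega \in \mathring{\mathcal{P}}_r\Lambda^k(f)$, and the hypothesis gives orthogonality against every $\eta \in \mathcal{P}_r^-\Lambda^0(f) = \mathcal{P}_r(f)$. Applying Lemma \ref{lm:quasi} with the ambient simplex taken as $f$, where both the form degree and the spatial dimension equal $k$, the required test space is $\mathcal{P}_{r-k+k}^-\Lambda^0(f) = \mathcal{P}_r(f)$; this matches exactly, so $\mathrm{Tr}_f\omega = 0$.

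For the inductive step, I would suppose $\mathrm{Tr}_g\omega = 0$ for every $g \in \Delta_m(T)$ with $k \leq m < \ell$, and take $f \in \Delta_\ell(T)$. Any proper face $g \subset \partial f$ has $\dim g < \ell$; if $\dim g < k$, then $\mathrm{Tr}_g\omega = 0$ trivially, while for $k \leq \dim g < \ell$ the induction hypothesis gives $\mathrm{Tr}_g\omega = 0$. Consequently $\mathrm{Tr}_f\omega \in \mathring{\mathcal{P}}_r\Lambda^k(f)$. The hypothesis of the theorem provides $\int_f e(\bm{x}) \mathrm{Tr}_f\omega \wedge \eta = 0$ for all $\eta \in \mathcal{P}_{r+k-\ell}^-\Lambda^{\ell-k}(f)$, which coincides precisely with the test space $\mathcal{P}_{r-\ell+k}^-\Lambda^{\ell-k}(f)$ required by Lemma \ref{lm:quasi} on $f$ (ambient dimension $\ell$, form degree $k$). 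The lemma therefore gives $\mathrm{Tr}_f\omega = 0$, and iterating up to $\ell = n$ yields $\omega = \mathrm{Tr}_T\omega = 0$.

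The main obstacle here is really just bookkeeping rather than analysis: once Lemma \ref{lm:quasi} is established, the remaining task is to confirm that the polynomial indices appearing in the hypothesis of Theorem \ref{tm:quasi} line up correctly with those demanded by Lemma \ref{lm:quasi} under the substitution $T \to f$, $n \to \ell$, and to handle the base case $\ell = k$ by appealing to the vanishing of $k$-covectors on $(k-1)$-dimensional spaces. Since the first-kind analog, Theorem \ref{tm:quasi-minus}, was proved by exactly this scheme, no new techniques are needed in the second-kind case.
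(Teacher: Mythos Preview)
Your proposal is correct and follows precisely the route the paper indicates: the paper does not spell out a separate proof of Theorem~\ref{tm:quasi} but simply remarks that the induction argument used for Theorem~\ref{tm:quasi-minus} carries over verbatim once Lemma~\ref{lm:quasi} replaces Lemma~\ref{lm:quasi-minus}. Your write-up makes the index bookkeeping explicit and matches the paper's intended argument step for step.
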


We will give the proof of Lemma \ref{lm:quasi} in the rest of this
section.

\subsection{Calculating the ``mass" matrix for the polynomial spaces of the second kind}
We begin by showing several results that lead to computable form of the mass matrix. The first result is on the Whitney forms depending on general mappings $\rho\in \Sigma_0(n-k,n)$, meaning that $\rho(0)$ is not necessarily $0$.
\begin{lemma} \label{lm:equal}
For any $\rho \in \Sigma_0(n-k,n)$, it holds that
\begin{equation}\label{eq:Whitney3}
\phi_\rho \wedge (\rd\lambda)_{\rho^*} = (-1)^{\sigma(\rho)}
\left( \sum_{i=0}^{n-k} \lambda_{\rho(i)} \right)
\frac{\operatorname{vol}_T}{n!|T|}.
\end{equation}
\end{lemma}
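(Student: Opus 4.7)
The plan is to extend the argument of Lemma~\ref{lm:equal-minus} from the special case $\rho(0)=0$ to general $\rho\in\Sigma_0(n-k,n)$. Expanding $\phi_\rho$ via~\eqref{eq:Whitney} and distributing the wedge with $(\rd\lambda)_{\rho^*}$ gives
$$
\phi_\rho\wedge(\rd\lambda)_{\rho^*}=\sum_{i=0}^{n-k}T_i,\quad T_i:=(-1)^i\lambda_{\rho(i)}\,\rd\lambda_{\rho(0)}\wedge\cdots\wedge\widecheck{\rd\lambda_{\rho(i)}}\wedge\cdots\wedge\rd\lambda_{\rho(n-k)}\wedge(\rd\lambda)_{\rho^*}.
$$
The claim to establish is that $T_i=(-1)^{\sigma(\rho)}\lambda_{\rho(i)}\operatorname{vol}_T/(n!|T|)$ for every $i$, after which summation in $i$ produces the right-hand side of~\eqref{eq:Whitney3}.

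The main device is again the linear dependence $\sum_{j=0}^n\rd\lambda_j=0$, which I would invoke to eliminate any $\rd\lambda_0$ appearing in $T_i$. The new wrinkle is that $\rd\lambda_0$ may now live in one of two different places: if $\rho(0)=0$, then $\rd\lambda_0$ sits among the Whitney factors (and is simply absent from $T_0$); if $\rho(0)\ne 0$, then $\rho^*(0)=0$ and $\rd\lambda_0$ appears as the leading factor of $(\rd\lambda)_{\rho^*}$. In either case, as long as $\rho(i)\ne 0$, the substitution $\rd\lambda_0\mapsto-\rd\lambda_{\rho(i)}$ yields the unique nonvanishing term, since every other $\rd\lambda_j$ with $j\ne\rho(i)$ already appears in the wedge. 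The exceptional case $\rho(i)=0$ occurs only when $i=0$ and $\rho(0)=0$, and there no substitution is needed.

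The principal obstacle is the sign bookkeeping. When $\rho(0)=0$, setting $\rho'=(\rho(1),\ldots,\rho(n-k))\in\Sigma(n-k,n)$ reduces the computation almost verbatim to the one in Lemma~\ref{lm:equal-minus}, together with the observation that prepending the smallest element $0$ to the defining array of $\sigma$ contributes no inversions, so $\sigma(\rho)=\sigma(\rho')$. When $\rho(0)\ne 0$, the substitution lands $\rd\lambda_{\rho(i)}$ inside $(\rd\lambda)_{\rho^*}$ rather than among the Whitney factors, and reordering it back into its natural position requires $n-k-i$ transpositions; combined with the Whitney sign $(-1)^i$ and the $-1$ from the substitution, this yields an overall factor of $(-1)^{n-k+1}$. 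This matches exactly the sign shift in $\sigma(\rho)$ caused by $\rho^*(0)=0$ sitting at position $n-k+1$ in the defining array, which contributes precisely $n-k+1$ inversions against the $n-k+1$ positive entries of $\rho$ to its left. The two factors of $(-1)^{n-k+1}$ cancel, leaving $(-1)^{\sigma(\rho)}$ in all cases.

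With each $T_i$ identified, summation collapses the coefficients into $\sum_{i=0}^{n-k}\lambda_{\rho(i)}$ and yields~\eqref{eq:Whitney3}. The overall structure is a direct generalization of Lemma~\ref{lm:equal-minus}, in which the only genuinely new ingredient is keeping track of the two possible locations of $\rd\lambda_0$ and verifying the compensating sign cancellation when $\rho(0)\ne 0$.
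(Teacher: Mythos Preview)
Your proposal is correct and follows essentially the same route as the paper's proof: both split into the cases $\rho(0)=0$ (reducing to Lemma~\ref{lm:equal-minus}) and $\rho(0)>0$, and in the latter case eliminate $\rd\lambda_0$ from $(\rd\lambda)_{\rho^*}$ via $\sum_j\rd\lambda_j=0$, then move the surviving $\rd\lambda_{\rho(i)}$ back into position with $n-k-i$ transpositions to obtain the common factor $(-1)^{n-k+1}(\rd\lambda)_\rho\wedge(\rd\lambda)_{\rho^*\setminus\{0\}}$. Your sign discussion is slightly more explicit than the paper's, but the argument is the same.
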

\begin{proof} For the case in which $\rho(0) = 0$, \eqref{eq:Whitney3}
is implied by \eqref{eq:equal-minus}. Next, we consider the case in
which $\rho(0) > 0$, namely $\rho^*(0) = 0$.
\[
\begin{aligned}
\phi_\rho \wedge (\rd\lambda)_{\rho^*} &=
\left( \sum_{i=0}^{n-k} (-1)^i \lambda_{\rho(i)} \rd\lambda_{\rho(0)}
    \wedge \cdots \widecheck{\rd\lambda_{\rho(i)}} \wedge \cdots
    \wedge \rd\lambda_{\rho(n-k)}\right) \wedge (\rd \lambda)_{\rho^*} \\
&= \sum_{i=0}^{n-k} (-1)^{i+1}
\lambda_{\rho(i)} \rd\lambda_{\rho(0)}
    \wedge \cdots \widecheck{\rd\lambda_{\rho(i)}} \wedge \cdots
    \wedge \rd\lambda_{\rho(n-k)} \\
&\qquad\qquad\qquad\quad \wedge \rd\lambda_{\rho(i)} \wedge
\rd\lambda_{\rho^*(1)} \wedge \cdots \wedge\rd\lambda_{\rho^*(k-1)} \\
&= (-1)^{n-k+1} \sum_{i=0}^{n-k}  \lambda_{\rho(i)} (\rd\lambda)_{\rho}
\wedge
\rd\lambda_{\rho^*(1)} \wedge \cdots \wedge\rd\lambda_{\rho^*(k-1)}\\
&= (-1)^{\sigma(\rho)}
\left( \sum_{i=0}^{n-k} \lambda_{\rho(i)} \right)
\frac{\operatorname{vol}_T}{n!|T|}.
\end{aligned}
\]
This completes the proof of the representation in~\eqref{eq:Whitney3}.
\end{proof}

The relation we show next is a key in computing the entries of the mass matrix. Note that Lemma~\ref{lm:cross} below has the same formulation as Lemma~\ref{lm:cross-minus}. However, the different polynomial spaces require different proofs.
\begin{lemma} \label{lm:cross}
For $\rho, \widetilde{\rho} \in \Sigma_0(n-k,n)$ and $\rho \neq \widetilde{\rho}$,
it holds that
\begin{equation}\label{eq:cross}
\begin{aligned}
&~\quad \phi_{\rho} \wedge (\rd\lambda)_{\widetilde{\rho}^*} \\
& =
\left\{
\begin{aligned}
(-1)^{\sigma(\widetilde{\rho}) + s+t} \lambda_{\rho\cap \widetilde{\rho}^*}
\frac{\operatorname{vol}_T}{n!|T|} &
\quad \text{if } \rho \cap \widetilde{\rho}^* = \{\rho(s)\}, \rho^* \cap
\widetilde{\rho} = \{\widetilde{\rho}(t)\}, \\
0 & \quad \text{otherwise}.
\end{aligned}
\right.
\end{aligned}
\end{equation}
\end{lemma}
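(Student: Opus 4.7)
The plan is to mirror the strategy of Lemma~\ref{lm:cross-minus}, with the added care required by the fact that $\rho \in \Sigma_0(n-k,n)$ need not start with $\rho(0)=0$. First, I would set up the combinatorics: since $|\rho|=|\widetilde\rho|=n-k+1$ and $\rho\neq\widetilde\rho$, a direct count yields $|\rho\cap\widetilde\rho^*|\ge 1$, and by symmetry $|\rho^*\cap\widetilde\rho|=|\rho\cap\widetilde\rho^*|$. For the vanishing case $|\rho\cap\widetilde\rho^*|\ge 2$, each term of $\phi_\rho$ skips exactly one index $\rho(i)$, so at least one element of $\rho\cap\widetilde\rho^*$ survives in the wedge and coincides with an index in $(\rd\lambda)_{\widetilde\rho^*}$, forcing that term to vanish. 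In the non-trivial case $|\rho\cap\widetilde\rho^*|=1$ with $\rho\cap\widetilde\rho^*=\{\rho(s)\}$ and $\rho^*\cap\widetilde\rho=\{\widetilde\rho(t)\}$, the same reasoning leaves only the $i=s$ term of $\phi_\rho$, yielding
\[
\phi_\rho\wedge(\rd\lambda)_{\widetilde\rho^*}=(-1)^s\lambda_{\rho(s)}\,\bigl[\rd\lambda_{\rho(0)}\wedge\cdots\widecheck{\rd\lambda_{\rho(s)}}\cdots\wedge\rd\lambda_{\rho(n-k)}\bigr]\wedge(\rd\lambda)_{\widetilde\rho^*}.
\]

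The remaining step is to reduce the residual wedge to $\operatorname{vol}_T/(n!|T|)$ and track the sign. Using the set equality $\rho\setminus\{\rho(s)\}=\widetilde\rho\setminus\{\widetilde\rho(t)\}$, one can rewrite the residual wedge as the ordered sequence $\mathcal{R}(\widetilde\rho)$ with the entry $\widetilde\rho(t)$ suppressed, followed by $\mathcal{R}(\widetilde\rho^*)$. The number of inversions of this concatenated $n$-tuple equals $\sigma(\widetilde\rho)$ minus the number of elements of $\widetilde\rho^*$ strictly less than $\widetilde\rho(t)$; the latter equals $\widetilde\rho(t)-t$, since exactly $t$ entries of $\widetilde\rho$ sit below $\widetilde\rho(t)$. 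On the other hand, the ordered wedge of all $\rd\lambda_j$ for $j\in\{0,\dots,n\}\setminus\{\widetilde\rho(t)\}$ evaluates to $(-1)^{\widetilde\rho(t)}\operatorname{vol}_T/(n!|T|)$, a fact that follows from $\sum_{i=0}^n\rd\lambda_i=0$. Combining these, the residual wedge equals $(-1)^{\sigma(\widetilde\rho)+t}\operatorname{vol}_T/(n!|T|)$, and multiplying by the leading $(-1)^s\lambda_{\rho(s)}=(-1)^s\lambda_{\rho\cap\widetilde\rho^*}$ recovers the claimed sign $(-1)^{\sigma(\widetilde\rho)+s+t}$.

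The main obstacle is precisely the sign bookkeeping in the residual wedge. In Lemma~\ref{lm:cross-minus}, the clean substitution $\rd\lambda_0=-\sum_{i=1}^n\rd\lambda_i$ is available because $0$ is always the first index of $(0,\rho)$; here, since $\rho(0)$ may exceed $0$, an analogous substitution based on $\sum_{i=0}^n\rd\lambda_i=0$ would leave two surviving terms (one for $\rd\lambda_{\widetilde\rho(t)}$ and one for $\rd\lambda_{\rho(s)}$) rather than one, breaking the clean one-step reduction used for the first kind. The inversion-counting approach above sidesteps this difficulty by never performing a substitution and instead reading off the sign directly from the permutation that brings the residual sequence into sorted order, which is also consistent with the proof strategy of Lemma~\ref{lm:equal} used just above.
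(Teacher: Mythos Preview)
Your argument is correct and takes a genuinely different route from the paper. The paper proves Lemma~\ref{lm:cross} by a four-way case split on whether $\rho(0)=0$ and whether $\widetilde\rho(0)=0$: in each case the substitution $\rd\lambda_0=-\sum_{i\ge1}\rd\lambda_i$ is applied at a different location (to the Whitney form in Cases~3--4, to $(\rd\lambda)_{\widetilde\rho^*}$ in Cases~1--2), and Case~3 is reduced to Lemma~\ref{lm:cross-minus}. You instead handle all cases at once: after isolating the surviving $i=s$ term, you identify $\rho\setminus\{\rho(s)\}$ with $\widetilde\rho\setminus\{\widetilde\rho(t)\}$ as increasing sequences, count that removing $\widetilde\rho(t)$ from the concatenation $(\widetilde\rho,\widetilde\rho^*)$ eliminates exactly $\widetilde\rho(t)-t$ inversions, and combine with the identity $\rd\lambda_0\wedge\cdots\widecheck{\rd\lambda_m}\cdots\wedge\rd\lambda_n=(-1)^m\operatorname{vol}_T/(n!|T|)$ to extract the sign $(-1)^{\sigma(\widetilde\rho)+t}$ in one stroke. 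Your approach is shorter and more uniform, buying a case-free proof at the cost of a slightly more abstract permutation-counting step; the paper's case analysis is longer but each individual case is a purely mechanical wedge manipulation that is easier to verify line by line. Both ultimately rely on $\sum_{i=0}^n\rd\lambda_i=0$, so the difference is organizational rather than conceptual.
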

\begin{proof}
We consider  four possible cases which depend on whether $\rho(0)=0$ or $\widetilde{\rho}(0)=0$.
\paragraph{\bf Case 1:~~} $\rho(0)>0, \widetilde{\rho}(0)=0$. If $|\rho \cap
\widetilde{\rho}^*| \geq 2$, from the definition of Whitney form
\eqref{eq:Whitney}, we easily see that $\phi_{\rho} \wedge
(\rd\lambda)_{\widetilde{\rho}^*} = 0$. The left case is $\rho \cap
\widetilde{\rho}^* = \{\rho(s)\}$ (in this case $\rho^* \cap \widetilde{\rho} = \{0\}
= \{\widetilde{\rho}(0)\}$, namely $t=0$). Then,
$$
\begin{aligned}
\phi_{\rho} \wedge (\rd\lambda)_{\widetilde{\rho}^*} &=
(-1)^s \lambda_{\rho(s)}
\rd\lambda_{\rho(0)}
\wedge \cdots \widecheck{\rd\lambda_{\rho(s)}} \wedge \cdots
\wedge \rd\lambda_{\rho(n-k)} \wedge (\rd\lambda)_{\widetilde{\rho}^*} \\
&= (-1)^s \lambda_{\rho(s)} ({\rd\lambda})_{\widetilde\rho \setminus \{0\}}
\wedge (\rd\lambda)_{\widetilde{\rho}^*} \\
&= (-1)^{\sigma(\widetilde{\rho})+s+0} \lambda_{\rho\cap \widetilde{\rho}^*}
\frac{\operatorname{vol}_T}{n!|T|}.
\end{aligned}
$$
\paragraph{\bf Case 2:~~} $\rho(0)>0, \widetilde{\rho}(0) > 0$. If $|\rho \cap
\widetilde{\rho}^*| \geq 2$, then $\phi_{\rho} \wedge
(\rd\lambda)_{\widetilde{\rho}^*} = 0$ since there only has one
$\rd\lambda_0$ in $(\rd\lambda)_{\widetilde{\rho}^*}$. The left case is
$\rho \cap \widetilde{\rho}^* = \{\rho(s)\}$, $\rho^*\cap \widetilde{\rho} =
\{\widetilde{\rho}(t)\}$. Then,
$$
\begin{aligned}
& ~~\quad \phi_{\rho} \wedge (\rd\lambda)_{\widetilde{\rho}^*} \\
&= (-1)^s\lambda_{\rho(s)}
\rd\lambda_{\rho(0)}
\wedge \cdots \widecheck{\rd\lambda_{\rho(s)}} \wedge \cdots
\wedge \rd\lambda_{\rho(n-k)} \wedge (\rd\lambda)_{\widetilde{\rho}^*} \\
&= (-1)^{s}\lambda_{\rho(s)}
\rd\lambda_{\rho(0)}
\wedge \cdots \widecheck{\rd\lambda_{\rho(s)}} \wedge \cdots
\wedge \rd\lambda_{\rho(n-k)} \\
&\quad\qquad\qquad \wedge (-\rd\lambda_{\widetilde{\rho}(t)}) \wedge
\rd\lambda_{\widetilde{\rho}^*(1)} \wedge \cdots \wedge
\rd\lambda_{\widetilde{\rho}^*(k-1)} \\
&= (-1)^{s+n-k+1} \lambda_{\rho\cap\widetilde{\rho}^*}
\rd\lambda_{\widetilde{\rho}(t)} \wedge
\rd\lambda_{\rho(0)}
\wedge \cdots \widecheck{\rd\lambda_{\rho(s)}} \wedge \cdots
\wedge \rd\lambda_{\rho(n-k)}
(\rd\lambda)_{\widetilde{\rho}^*\setminus\{0\}} \\
&= (-1)^{s+t+n-k+1}\lambda_{\rho\cap\widetilde{\rho}^*}
(\rd\lambda)_{\widetilde{\rho}} \wedge
(\rd\lambda)_{\widetilde{\rho}^*\setminus \{0\}} \\
&= (-1)^{\sigma(\widetilde{\rho})+s+t} \lambda_{\rho\cap\widetilde{\rho}^*}
\frac{\operatorname{vol}_T}{n!|T|}.
\end{aligned}
$$
\paragraph{\bf Case 3:~~} $\rho(0) = 0, \widetilde{\rho}(0)=0$. This case is implied
by Lemma \ref{lm:cross-minus} by considering $\rho \leftarrow \rho
\setminus\{0\}$ and $\widetilde{\rho} \leftarrow \widetilde{\rho}\setminus\{0\}$.
\paragraph{\bf Case 4:~~} $\rho(0) = 0, \widetilde{\rho}(0)>0$. In this case, we have
$\widetilde{\rho}^*(0) = 0$. Note that $\rd\lambda_0 \wedge \rd\lambda_0 =
0$, we have
$$
\phi_{\rho} \wedge (\rd\lambda)_{\widetilde{\rho}^*} = \lambda_0
(\rd\lambda)_{\rho\setminus\{0\}} \wedge
(\rd\lambda)_{\widetilde{\rho}^*}.
$$
If $|\rho \cap \widetilde{\rho}^*| \geq 2$ (or
$|(\rho\setminus\{0\}) \cap \widetilde{\rho}^*| \geq 1$), we have
$\phi_{\rho} \wedge (\rd\lambda)_{\widetilde{\rho}^*} = 0$. Therefore, there is only one case in which
$\phi_{\rho} \wedge (\rd\lambda)_{\widetilde{\rho}^*}$ is nonzero:
$\rho^* \cap \widetilde{\rho} = \{\widetilde{\rho}(t)\}$ and $\rho \cap
    \widetilde{\rho}^* = \{0\} = \{\rho(0)\}$, which gives
\[
\begin{aligned}
\phi_{\rho} \wedge (\rd\lambda)_{\widetilde{\rho}^*} &= \lambda_0
(\rd\lambda)_{\rho\setminus\{0\}} \wedge (-\rd\lambda)_{\widetilde{\rho}(t)}
\wedge \rd\lambda_{\widetilde{\rho}^*(1)}\wedge\cdots \wedge
\rd\lambda_{\widetilde{\rho}^*(k-1)} \\
&= (-1)^{n-k+1}\lambda_0 \rd\lambda_{\widetilde{\rho}(t)}
\wedge (\rd\lambda)_{\rho\setminus\{0\}} \wedge
(\rd\lambda)_{\widetilde{\rho}^*\setminus\{0\}} \\
&= (-1)^{\sigma(\widetilde{\rho})+0+t} \lambda_{\rho \cap \widetilde{\rho}^*}
\frac{\operatorname{vol}_T}{n!|T|}.
\end{aligned}
\]
The relation~\eqref{eq:cross} follows as these case cover all possible choices of $\rho$ and $\widetilde{\rho}$.
\end{proof}

Similarly to the Corollary \ref{cor:symmetry-minus} in the section for the polynomials of first kind, we have the
following result.
\begin{corollary} \label{cor:symmetry}
For any $\rho, \widetilde{\rho} \in \Sigma_0(n-k,n)$, it holds that
$$
(-1)^{\sigma(\rho)} \phi_{\rho} \wedge
\lambda_{\widetilde{\rho}} (\rd\lambda)_{\widetilde{\rho}^*} =
(-1)^{\sigma(\widetilde \rho)} \phi_{\widetilde{\rho}} \wedge
\lambda_{\rho} (\rd\lambda)_{\rho^*}.
$$
\end{corollary}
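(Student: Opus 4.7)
The plan is to mirror the proof strategy of Corollary \ref{cor:symmetry-minus}, since the statement is formally identical except that the weighting polynomial $\lambda_\rho$ sits on the "other" side of the wedge. I will handle three mutually exclusive cases determined by $|\rho \cap \widetilde{\rho}^*|$, invoking Lemma \ref{lm:equal} and Lemma \ref{lm:cross} to compute both sides explicitly.

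First, the case $\rho = \widetilde{\rho}$ is immediate: both sides reduce to $\lambda_\rho\bigl(\sum_{i=0}^{n-k}\lambda_{\rho(i)}\bigr)\frac{\operatorname{vol}_T}{n!|T|}$ by Lemma \ref{lm:equal}, the two sign factors $(-1)^{\sigma(\rho)}$ combining to $1$. Next, if $|\rho \cap \widetilde{\rho}^*| \geq 2$, then by Lemma \ref{lm:cross} we have $\phi_\rho \wedge (\rd\lambda)_{\widetilde{\rho}^*} = 0$, and by symmetry (swapping the roles of $\rho$ and $\widetilde{\rho}$, which leaves $|\widetilde{\rho} \cap \rho^*| \geq 2$ as well, since $|\rho|+|\widetilde{\rho}|+|\rho\cap\widetilde{\rho}^*|+|\rho^*\cap\widetilde{\rho}|=n+1+|\rho\cap\widetilde{\rho}|$ forces both intersections to have the same size) also $\phi_{\widetilde{\rho}} \wedge (\rd\lambda)_{\rho^*} = 0$, so both sides vanish.

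The remaining case is $|\rho \cap \widetilde{\rho}^*| = 1$, which is the main content. Writing $\rho \cap \widetilde{\rho}^* = \{\rho(s)\}$ and $\rho^* \cap \widetilde{\rho} = \{\widetilde{\rho}(t)\}$, Lemma \ref{lm:cross} gives
\[
\text{LHS} = (-1)^{\sigma(\rho)+\sigma(\widetilde{\rho})+s+t}\,\lambda_{\widetilde{\rho}}\,\lambda_{\rho(s)}\,\tfrac{\operatorname{vol}_T}{n!|T|}.
\]
For the right-hand side, swap the roles of $\rho$ and $\widetilde{\rho}$ in Lemma \ref{lm:cross}: now $\widetilde{\rho} \cap \rho^* = \{\widetilde{\rho}(t)\}$ plays the role of $\{\rho(s)\}$ (the new "$s$" is $t$) and $\widetilde{\rho}^* \cap \rho = \{\rho(s)\}$ plays the role of $\{\widetilde{\rho}(t)\}$ (the new "$t$" is $s$), yielding
\[
\text{RHS} = (-1)^{\sigma(\widetilde{\rho})+\sigma(\rho)+t+s}\,\lambda_{\rho}\,\lambda_{\widetilde{\rho}(t)}\,\tfrac{\operatorname{vol}_T}{n!|T|}.
\]

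The remaining obstacle, and the one point requiring care, is verifying that the polynomial products $\lambda_{\widetilde{\rho}}\lambda_{\rho(s)}$ and $\lambda_\rho\lambda_{\widetilde{\rho}(t)}$ coincide. Because $|\rho \cap \widetilde{\rho}^*| = |\widetilde{\rho} \cap \rho^*| = 1$, the two index sets $\mathcal{R}(\rho)$ and $\mathcal{R}(\widetilde{\rho})$ differ in exactly one element each: $\mathcal{R}(\rho)\setminus\mathcal{R}(\widetilde{\rho}) = \{\rho(s)\}$ and $\mathcal{R}(\widetilde{\rho})\setminus\mathcal{R}(\rho) = \{\widetilde{\rho}(t)\}$. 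Consequently $\mathcal{R}(\rho)\cup\{\widetilde{\rho}(t)\} = \mathcal{R}(\widetilde{\rho})\cup\{\rho(s)\}$, and since each $\lambda_\tau$ is defined as the product of barycentric coordinates over $\mathcal{R}(\tau)$, this set identity gives $\lambda_\rho\lambda_{\widetilde{\rho}(t)} = \lambda_{\widetilde{\rho}}\lambda_{\rho(s)}$. Combining the three cases completes the proof.
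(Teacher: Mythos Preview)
Your argument is correct and mirrors the paper's own proof: a case split on $|\rho\cap\widetilde{\rho}^*|$, with Lemma~\ref{lm:equal} handling $\rho=\widetilde{\rho}$, Lemma~\ref{lm:cross} handling the nontrivial case, and the key polynomial identity $\lambda_{\widetilde{\rho}}\lambda_{\rho(s)}=\lambda_{\rho}\lambda_{\widetilde{\rho}(t)}=\lambda_{\rho\cup\widetilde{\rho}}$ exactly as the paper writes it (only more tersely). One small remark: the parenthetical counting identity you state in the $|\rho\cap\widetilde{\rho}^*|\ge 2$ case is not quite right as written, but the conclusion you need, $|\rho\cap\widetilde{\rho}^*|=|\rho^*\cap\widetilde{\rho}|$, follows immediately from $|\rho|=|\widetilde{\rho}|$ via $|\rho|=|\rho\cap\widetilde{\rho}|+|\rho\cap\widetilde{\rho}^*|$ and the analogous decomposition of $|\widetilde{\rho}|$.
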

\begin{proof}
We only prove the case in which $\rho\cap \widetilde{\rho}^* = \{\rho(s)\}$
and $\rho^* \cap \widetilde{\rho} = \{\widetilde{\rho}(t)\}$, as the other cases
are trivial. By Lemma \ref{lm:cross},
$$
\begin{aligned}
\text{LHS} & = (-1)^{\sigma(\rho)}\lambda_{\widetilde{\rho}}
(-1)^{\sigma(\widetilde{\rho})+s+t} \lambda_{\rho \cap \widetilde{\rho}^*}
\frac{\operatorname{vol}_T}{n!|T|} \\
& =
(-1)^{\sigma(\rho)+\sigma(\widetilde{\rho})+s+t} \lambda_{\rho\cup
  \widetilde{\rho}}
\frac{\operatorname{vol}_T}{n!|T|} = \text{RHS}.
\end{aligned}
$$
This completes the proof.
\end{proof}

\subsection{Proof of Lemma \ref{lm:quasi}} We now consider a polynomial $k$-form $\omega \in
\mathring{\mathcal P}_r \Lambda^k(T)$. Theorem \ref{tm:trace-free} implies that this form can be
uniquely represented as
$$
\omega = \sum_{\rho\in \Sigma_0(n-k,n)} a_\rho \lambda_\rho
(\rd\lambda)_{\rho^*},
$$
where $a_\rho = a_{\rho}(\lambda_{\rho(0)}, \lambda_{\rho(0)+1},
\ldots, \lambda_n) \in \mathcal{P}_{r+k-n-1}(T)$. As we have done
earlier, we pick a special test form $\eta$ in \eqref{eq:eP-dofT}
defined by
$$
\eta = (-1)^{k(n-k)}\sum_{\rho \in \Sigma_0(n-k,n)} (-1)^{\sigma(\rho)} a_\rho \phi_\rho.
$$
Then,  Lemma \ref{lm:equal} and Lemma \ref{lm:cross}, imply that
\begin{equation} \label{eq:omega-eta}
\omega \wedge \eta = \sum_{\rho, \widetilde{\rho}\in \Sigma_0(n-k,n)}
a_{\rho} a_{\widetilde{\rho}} m_{\rho\widetilde{\rho}}
\frac{\operatorname{vol}_T}{n!|T|}.
\end{equation}
where $M = (m_{\rho\widetilde{\rho}})$. Now, Corollary~\ref{cor:symmetry} implies that $M$ is symmetric and, moreover, we have that
\begin{equation} \label{eq:M}
m_{\rho\widetilde{\rho}} =
\left\{
\begin{aligned}
\lambda_{\rho} (\sum_{i=0}^{n-k}\lambda_{\rho(i)}) & \quad
\text{if } \rho = \widetilde{\rho}, \\
(-1)^{\sigma(\rho) + \sigma(\widetilde{\rho}) + s+t} \lambda_{\rho \cup
\widetilde{\rho}} &
\quad \text{if } \rho \cap \widetilde{\rho}^* = \{\rho(s)\}, \rho^* \cap
\widetilde{\rho} = \{\widetilde{\rho}(t)\}, \\
0 & \quad \text{otherwise}.
\end{aligned}
\right.
\end{equation}

To prove the next result we need the canonical Euclidean basis
$\mathbb{R}^{n+1}$ which we denote by $\mu_0, \ldots, \mu_n$.
\begin{lemma} \label{lm:key}
Let $m_{\rho\widetilde{\rho}}$ be given in \eqref{eq:M}. It holds that
\begin{equation} \label{eq:key}
\sum_{\rho, \widetilde{\rho}\in \Sigma_0(n-k,n)} a_{\rho} a_{\widetilde{\rho}}
m_{\rho\widetilde{\rho}} =
\langle\theta, \theta\rangle_{\mathrm{Alt}^{k+1}\mathbb{R}^{n+1}},
\end{equation}
where
$$
\theta = \sum_{\rho\in\Sigma_0(n-k,n)} a_\rho \sum_{i=0}^{n-k}
\sqrt{\lambda_\rho \lambda_{\rho(i)}} \mu_{\rho(i)}' \wedge
\mu_{\rho^*}'.
$$
\end{lemma}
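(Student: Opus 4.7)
The plan is to mirror the proof of Lemma~\ref{lm:key-minus}, expanding $\langle\theta,\theta\rangle$ by bilinearity into a sum over quadruples $(\rho, \widetilde{\rho}, i, j)$ and matching the result against the entries of the matrix $M$ defined in~\eqref{eq:M}. The central observation is that $\mu_{\rho(i)}'\wedge\mu_{\rho^*}'$ equals the standard basis element $\mu_\tau'$ of $\mathrm{Alt}^{k+1}\mathbb{R}^{n+1}$ up to a sign, where $\tau$ is the increasing arrangement of $\{\rho(i)\}\cup\mathcal{R}(\rho^*)$. Since $(\rho^*(0),\ldots,\rho^*(k-1))$ is already increasing, moving $\mu_{\rho(i)}'$ into its correct slot costs exactly $\rho(i)-i$ adjacent transpositions, this being the number of entries in $\rho^*$ strictly less than $\rho(i)$. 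It follows that
\begin{equation*}
\langle \mu_{\rho(i)}'\wedge\mu_{\rho^*}',\, \mu_{\widetilde{\rho}(j)}'\wedge\mu_{\widetilde{\rho}^*}'\rangle_{\mathrm{Alt}^{k+1}\mathbb{R}^{n+1}} = (-1)^{(\rho(i)-i)+(\widetilde{\rho}(j)-j)}
\end{equation*}
whenever the sets $\{\rho(i)\}\cup\mathcal{R}(\rho^*)$ and $\{\widetilde{\rho}(j)\}\cup\mathcal{R}(\widetilde{\rho}^*)$ coincide, and vanishes otherwise.

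I then split into the three cases that align with~\eqref{eq:M}. When $\rho=\widetilde{\rho}$, only $i=j$ contributes, giving the diagonal entry $\lambda_\rho\sum_{i=0}^{n-k}\lambda_{\rho(i)}=m_{\rho\rho}$. When $|\rho\cap\widetilde{\rho}^*|\geq 2$, one has $|\rho^*\cap\widetilde{\rho}|=|\rho\cap\widetilde{\rho}^*|\geq 2$ as well, so $\mathcal{R}(\rho)$ and $\mathcal{R}(\widetilde{\rho})$ differ in at least four positions, and the equality $\mathcal{R}(\rho)\setminus\{\rho(i)\}=\mathcal{R}(\widetilde{\rho})\setminus\{\widetilde{\rho}(j)\}$ cannot hold for any $(i,j)$; the cross term therefore vanishes, matching the zero off-diagonal entry of $M$. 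When $|\rho\cap\widetilde{\rho}^*|=1$, writing $\rho\cap\widetilde{\rho}^*=\{\rho(s)\}$ and $\rho^*\cap\widetilde{\rho}=\{\widetilde{\rho}(t)\}$, the only contributing pair is $(i,j)=(s,t)$, and the magnitude simplifies via $\lambda_\rho\lambda_{\widetilde{\rho}(t)}=\lambda_{\widetilde{\rho}}\lambda_{\rho(s)}=\lambda_{\rho\cup\widetilde{\rho}}$ (since $\rho$ and $\widetilde{\rho}$ agree outside $\{\rho(s),\widetilde{\rho}(t)\}$), so that $\sqrt{\lambda_\rho\lambda_{\rho(s)}\lambda_{\widetilde{\rho}}\lambda_{\widetilde{\rho}(t)}}=\lambda_{\rho\cup\widetilde{\rho}}$.

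The main obstacle is the sign bookkeeping in the last case, where one must verify
\begin{equation*}
(-1)^{(\rho(s)-s)+(\widetilde{\rho}(t)-t)} = (-1)^{\sigma(\rho)+\sigma(\widetilde{\rho})+s+t}.
\end{equation*}
Modulo $2$ this reduces to the congruence $\sigma(\rho)+\sigma(\widetilde{\rho})\equiv \rho(s)+\widetilde{\rho}(t)\pmod 2$. I would establish this by counting inversions position by position to obtain the representation $\sigma(\rho)=\sum_{i=0}^{n-k}(\rho(i)-i)$, and then observing that the swap $\rho(s)\leftrightarrow\widetilde{\rho}(t)$ connecting $\rho$ and $\widetilde{\rho}$ leaves every other position unchanged, so $\sigma(\rho)-\sigma(\widetilde{\rho})=\rho(s)-\widetilde{\rho}(t)$. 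Summing the three cases then reproduces the quadratic form $\sum_{\rho,\widetilde{\rho}}a_\rho a_{\widetilde{\rho}}m_{\rho\widetilde{\rho}}$ entry by entry, yielding~\eqref{eq:key}.
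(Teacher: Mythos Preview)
Your proof is correct and follows essentially the same three-case expansion as the paper's argument. Your explicit sign analysis via the identity $\sigma(\rho)=\sum_{i=0}^{n-k}(\rho(i)-i)$ is in fact more detailed than the paper's own treatment, which simply asserts $\langle\mu_{\rho(s)}'\wedge\mu_{\rho^*}',\mu_{\widetilde{\rho}(t)}'\wedge\mu_{\widetilde{\rho}^*}'\rangle=(-1)^{\sigma(\rho)+\sigma(\widetilde{\rho})+s+t}$ without spelling out the inversion count.
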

\begin{proof}
It is straightforward to show that the coefficient of $a_{\rho}^2$ of
the right hand side of \eqref{eq:key} is $m_{\rho\rho}$. If $\rho \cap
\widetilde{\rho}^* = \{\rho(s)\}$ and $\rho^*\cap\widetilde{\rho} =
\{\widetilde{\rho}(t)\}$, then
$$
\begin{aligned}
& \Big\langle a_\rho \sum_{i=0}^{n-k}
\sqrt{\lambda_\rho \lambda_{\rho(i)}} \mu_{\rho(i)}' \wedge
\mu_{\rho^*}',
a_{\widetilde{\rho}}
\sum_{j=0}^{n-k}
\sqrt{\lambda_{\widetilde\rho} \lambda_{\widetilde{\rho}(j)}}
\mu_{\widetilde{\rho}(j)}' \wedge \mu_{\widetilde{\rho}^*}' \Big\rangle \\
= ~&a_{\rho}a_{\widetilde{\rho}} \Big\langle
\sqrt{\lambda_\rho \lambda_{\rho(s)}} \mu_{\rho(s)}' \wedge
\mu_{\rho^*}',
\sqrt{\lambda_{\widetilde\rho} \lambda_{\widetilde{\rho}(t)}}
\mu_{\widetilde{\rho}(t)}' \wedge \mu_{\widetilde{\rho}^*}'
\Big\rangle \\
= ~&a_{\rho}a_{\widetilde{\rho}} \lambda_{\rho \cup \widetilde{\rho}} \Big\langle
\mu_{\rho(s)}' \wedge \mu_{\rho^*}',
\mu_{\widetilde{\rho}(t)}' \wedge \mu_{\widetilde{\rho}^*}'
\Big\rangle \\
= ~& (-1)^{\sigma(\rho)+\sigma(\widetilde{\rho})+s+t} \lambda_{\rho \cup
  \widetilde{\rho}^*}.
\end{aligned}
$$
Both left and right hand sides of \eqref{eq:key} obviously vanish when
$|\rho \cap \widetilde{\rho}^*| \geq 2$.
\end{proof}
We are ready to show the main result of this section.
\begin{proof}[Proof of Lemma \ref{lm:quasi}]
From \eqref{eq:omega-eta} and \eqref{eq:key}, it is obvious that the
coefficients in front of $\operatorname{vol}_T$ in the product $\omega \wedge \eta$
do not change sign in $\bar{T}$, as $\lambda_i|_{\bar{T}} \geq 0$.
By Lemma \ref{lm:no-change-sign}, we have that $\theta \equiv 0$. This shows that
\begin{equation} \label{eq:vanish-theta}
\sum_{\rho \in \Sigma_0(n-k,n)} a_\rho \sum_{i=0}^{n-k}
\sqrt{\lambda_\rho \lambda_{\rho(i)}} \mu_{\rho(i)}'\wedge \mu_{\rho^*}'
\equiv 0,
\end{equation}
where $a_\rho = a_\rho(\lambda_{\rho(0)}, \lambda_{\rho(0)+1}, \ldots,
\lambda_{n}) \in \mathcal{P}_{r+k-n-1}(T)$. Next, we show that
$a_\rho \equiv 0$ by \eqref{eq:vanish-theta} and an {\it induction
argument} on $\rho(0)$ (the minimal index of $\rho$).

\paragraph{\bf Step 1.~~} We first assume that
$\rho(0) = k$, namely $\mathcal{R}(\rho) = \{k, k+1,\ldots n\}$. Collecting the coefficients in front of $\mu_{\rho^*\cup
\{k\}}'$ (namely $\mu_0'\wedge\cdots\wedge \mu_k'$) in \eqref{eq:vanish-theta}, we have
$$
(-1)^k a_\rho \sqrt{\lambda_\rho \lambda_k} + \sum_{i=0}^{k-1}(-1)^i a_{\rho\cup\{i\}\setminus\{k\}}
\sqrt{\lambda_{\rho\cup\{i\}\setminus\{k\}} \lambda_i} \equiv 0.
$$
Restricting the above identity on the simplex $f_{\rho}$,
and recalling that $\lambda_i|_{f_\rho} = 0, 0 \leq i \leq (k-1)$, then it shows that
$a_\rho|_{f_\rho} \equiv 0$. Notice that
\[
a_\rho = a_\rho(\lambda_{\rho(0)}, \lambda_{\rho(0)+1}, \ldots, \lambda_n) = a_\rho(\lambda_k, \ldots, \lambda_n).
\]
We conclude that $a_\rho = 0$ because the extension $E_{f_{\rho}, T}$ is injective.

\paragraph{\bf Step 2.~~} Assume that $a_\rho = 0$ for $\ell < \rho(0) \leq k$. Then, for any $\rho\in \Sigma_0(n-k,n)$ with $\rho(0) = \ell$,
we collect the coefficients of $\mu_{\rho^* \cup \{\ell\}}'$ in \eqref{eq:vanish-theta}, which gives
$$
\pm a_\rho \sqrt{\lambda_\rho \lambda_\ell} +  \sum_{\widetilde{\rho}(0) < \ell} \pm a_{\widetilde{\rho}}\sqrt{\lambda_{\widetilde{\rho}}\lambda_{\widetilde{\rho} \cap \rho^*}}
+ \sum_{\widetilde{\rho}(0) > \ell} \pm a_{\widetilde{\rho}}\sqrt{\lambda_{\widetilde{\rho}}\lambda_{\widetilde{\rho} \cap \rho^*}} = 0.
$$
Restricting the above equality on $f := [x_\ell, x_{\ell+1}, \ldots, x_n]$. Notice that for $\widetilde{\rho}(0) < \ell$, we have $\lambda_{\widetilde{\rho}}|_f = 0$; while for $\widetilde{\rho}(0) > \ell$, we have
$a_{\widetilde{\rho}} = 0$ by the inductive assumption. Hence, $a_\rho|_f = 0$, which gives $a_\rho = 0$ due to the fact that $a_\rho = a_\rho(\lambda_\ell, \lambda_{\ell+1}, \ldots, \lambda_n)$.

Combining the results from {\bf Step 1} and {\bf Step 2} complete the proof of the lemma.
\end{proof}

%%%%%%%%%%%%%%%%%%%%%%%%%%%%%%%%%%%%%%%%%%%%%%%%%%%%%%%%%%%%
%% High order SAFE
%%%%%%%%%%%%%%%%%%%%%%%%%%%%%%%%%%%%%%%%%%%%%%%%%%%%%%%%%%%%
\section{An application: exponential fitting for general
convection-diffusion problems}\label{s:applications}

In this section we give a derivation of the simplex-averaged finite
element (SAFE) scheme of arbitrary order for the general
convection-diffusion problems.

\subsection{General convection-diffusion problems}
Let ${\beta}({\bm x})$ be a given vector field and consider the
general convection-diffusion problem in the following form:
\begin{equation} \label{equ:conv-diff}
\left\{
\begin{array}{ll}
\mathcal{L}u := \rd^*(\alpha \rd u + i^*_{{\beta}} u) + \gamma u = f
&\quad \text{in } \Omega,\\
\Tr u = 0 & \quad \text{on }\Gamma_0 \subset \partial \Omega, \\
\Tr[\star (\alpha \rd u + i_{{\beta}}^*u)] = g & \quad \text{on }
\Gamma_{N} = \partial \Omega \setminus \Gamma_0. \\
\end{array}
\right.
\end{equation}
We assume that $\alpha$, ${\beta}$ and $\gamma$ are piecewise smooth
functions on $\bar{\Omega}$ and $\alpha(x) \geq \alpha_0 > 0$,
$\gamma(x) \geq 0$.  Here, $\rd$, $\rd^*$, $i_{{\beta}}$,
$i_{{\beta}}^*$, $\star$, $\Tr$ denote the coderivative,
contraction, dual of contraction, Hodge star, and trace operator,
respectively (cf.  \cite{2006ArnoldFalkWinther-a}).
\begin{table}[!htbp]
\centering
\begin{tabular}{m{0.6cm}m{3.0cm}m{3.4cm}m{1.1cm}m{0.8cm}m{0.8cm}}
\hline
$k$ & $\rd u$ & $\rd^*u$ & $i_{\beta}u$ & $i_{\beta}^* u$ & $\mathrm{Tr}$\\
\hline\hline
$0$ & $\grad u$ (or $\nabla u$) & $-\mydiv u$ (or $-\nabla \cdot u$) & & $\beta u$ & $u$ \\
$1$ & $\curl u$ (or $\nabla \times u$) & $\curl u$ (or $\nabla \times
    u$) & $\beta\cdot u$ & $\beta\times u$ & $\nu \times u$ \\
$2$ & $\mydiv u$ (or $\nabla \cdot u$) & $-\grad u$ (or $-\nabla u$) &
$-\beta\times u$ & $\beta\cdot u$ & $u\cdot \nu$ \\
$3$ & & & $\beta u$ &
&  \\ \hline
\end{tabular}
\caption{Translation table in 3D.}
\label{tab:proxy-3D}
\end{table}

We note the identification between differential forms and vector
proxies in $\mathbb{R}^3$, outlined in Table \ref{tab:proxy-3D}.  The
specific examples  corresponding to Table \ref{tab:proxy-3D} are
listed as follows.

\begin{enumerate}
\item For $k=0$, we have the $H(\grad)$ convection-diffusion problem:
\begin{equation} \label{equ:grad-conservative}
\left\{
\begin{array}{ll}
-\nabla \cdot (\alpha \nabla u + \beta u) + \gamma u = f &\quad \text{in }
\Omega,\\
u = 0 & \quad \text{on }\Gamma_0 \subset \partial \Omega, \\
(\alpha \nabla u + \beta u)\cdot \nu = g & \quad \text{on }
\Gamma_{N} = \partial \Omega \setminus \Gamma_0. \\
\end{array}
\right.
\end{equation}

\item For $k=1$, we have the $H(\curl)$ convection-diffusion problem:
\begin{equation} \label{equ:curl-conservative}
\left\{
\begin{array}{ll}
\nabla \times(\alpha \nabla \times u + \beta \times u) + \gamma u = f
&\quad \text{in }
\Omega,\\
\nu \times u = 0 & \quad \text{on }\Gamma_0 \subset \partial \Omega, \\
\nu \times (\alpha \nabla \times u + \beta \times u) = g & \quad \text{on }
\Gamma_{N} = \partial \Omega \setminus \Gamma_0. \\
\end{array}
\right.
\end{equation}

\item For $k=2$, we have the $H(\mydiv)$ convection-diffusion problem:
\begin{equation} \label{equ:div-conservative}
\left\{
\begin{array}{ll}
-\nabla (\alpha \nabla \cdot u + \beta \cdot u) + \gamma u = f
&\quad \text{in }
\Omega,\\
u \cdot \nu = 0 & \quad \text{on }\Gamma_0 \subset \partial \Omega, \\
\alpha \nabla \cdot u + \beta \cdot u = g & \quad \text{on }
\Gamma_{N} = \partial \Omega \setminus \Gamma_0. \\
\end{array}
\right.
\end{equation}
We note that the $H({\rm curl})$ or $H({\rm div})$
convection-diffusion problem usually arises from the
magnetohydrodynamics (cf. \cite{gerbeau2006mathematical}).
\end{enumerate}

We introduce the space of vanishing trace on $\Gamma_0$ as
$$
V := \{w \in H\Lambda^k(\Omega): ~ \Tr w= 0~\text{on } \Gamma_0\},
$$
equipped with the norm $\|w\|_{H\Lambda,\Omega}^2 :=
\|w\|_{0,\Omega}^2 + \|\rd w\|_{0,\Omega}^2$.  Then, the variational
formulation for \eqref{equ:conv-diff} is: Find $u \in V$ such that
\begin{equation} \label{equ:variational}
a(u, v) = F(v) \qquad \forall v\in V,
\end{equation}
where
$$
a(u, v) := (\alpha \rd u + i_{\beta}^* u, \rd v) + (\gamma u, v), \quad
F(v) := (f, v) + \langle g, \Tr v \rangle_{\Gamma_N}.
$$

\subsection{Exponential fitting for the flux}
Let $\theta(\bm{x}) = \beta(\bm{x}) / \alpha(\bm{x})$, the
bilinear form $a(\cdot,\cdot)$ can be rewritten as
\begin{equation} \label{eq:bilinear-J}
a(u, v) = (\alpha J_\theta u, \rd v) + (\gamma u, v),
\end{equation}
where the flux $J_\theta u := \rd u + i_\theta^* u$.  We generalize
the identity of $J_\theta$ in \cite[Lemma 3.1]{2019WuXu-a} as follows.
Given a scalar function $\psi(\bm{x})$, we have, by the Leibniz rule,
that
$$
  \rd (\mathrm{e}^\psi u) = \rd \mathrm{e}^{\psi} \wedge u +
    \mathrm{e}^\psi \wedge \rd u = \mathrm{e}^\psi ( \nabla \psi
        \wedge u + \rd u) = \mathrm{e}^\psi J_{\nabla \psi} u,
$$
which gives
\begin{equation} \label{eq:exp-fitting}
J_{\nabla \psi} u = \mathrm{e}^{-\psi} \rd (\mathrm{e}^\psi u).
\end{equation}
In other words, the flux $J_{\nabla \psi}$ can be represented through
the following diagram.

\begin{center}
\begin{tikzpicture}[node distance=9em]
  \node (Form) {$C^\infty \Lambda^{k}(\Omega)$};
  \node (PlusForm) [right of= Form] {$C^\infty\Lambda^{k+1}(\Omega)$};
  \draw[->] (Form) to node [above] {$\rd$} (PlusForm);

  \node (JForm) [below of=Form] {$C^\infty\Lambda^k(\Omega)$};
  \node (JPlusForm) [right of=JForm] {$C^\infty\Lambda^{k+1}(\Omega)$};
  \draw[->] (JForm) to node [above] {$J_{\nabla \psi}$} (JPlusForm);

  \draw[->] (Form.255) to node [left]{$\mathrm{e}^{-\psi}$} (JForm.105);
  \draw[->] (JForm.75) to node [right]{$\mathrm{e}^{\psi}$} (Form.285);
  \draw[->] (PlusForm.255) to node [left]{$\mathrm{e}^{-\psi}$}
  (JPlusForm.105);
  \draw[->] (JPlusForm.75) to node [right]{$\mathrm{e}^{\psi}$}
  (PlusForm.285);
\end{tikzpicture}
\end{center}

\subsection{Defining numerical flux using quasi-polynomial spaces}
The way to define the numerical flux mimics the above diagram
at discrete level. In the first step, we use the polynomial
differential form to approximate the continuous one. We also denote
the local canonical interpolation as $\Pi_{T}^k: C^\infty
\Lambda^k(\Omega) \to \polyspace{}{}\Lambda^k(T)$.
\begin{center}
\begin{tikzpicture}[node distance=9em]
  \node (Form) {$\polyspace{}{}\Lambda^{k}(T)$};
  \node (PlusForm) [right of= Form] {$\polyspace{}{}\Lambda^{k+1}(T)$};
  \draw[->] (Form) to node [above] {$\rd$} (PlusForm);

  \node (JForm) [below of=Form]  {$\polyspace{}{}\Lambda^{k}(T)$};
  \node (JPlusForm) [right of=JForm] {$\polyspace{}{}\Lambda^{k+1}(T)$};

  \draw[->] (JForm.75) to node [right]{$\Pi_{T}^k \mathrm{e}^{\psi}$} (Form.285);
  \draw[->] (JPlusForm.75) to node [right]{$\Pi_{T}^{k+1}\mathrm{e}^{\psi}$}
  (PlusForm.285);
\end{tikzpicture}
\end{center}

In the second step, we need to show that the operator $\Pi_{T}^k \mathrm{e}^\psi:
\polyspace{}{}\Lambda^k(T) \to \polyspace{}{}\Lambda^k(T)$ is an
isomorphism. It suffices to check that $\Pi_{T}^k \mathrm{e}^\psi$
is an injection, which is readily shown, respectively, for $\mathcal{P} = \mathcal{P}_r^-$ in Section~\ref{s:first-kind}
and $\mathcal{P} = \mathcal{P}_r$ in Section~\ref{s:second-kind}.
Hence, the inverse of $\Pi_{T}^k \mathrm{e}^\psi$ on $\mathcal{P}\Lambda^k(T)$ (denoted by
$H_{T}^k$) exists. Then, the numerical flux is defined by
\begin{equation} \label{eq:numerical-flux}
J_{\nabla \psi, T} v_h := H_{T}^{k+1} \rd \Pi_{T}^k
\mathrm{e}^{\psi} v_h.
\end{equation}

\begin{center}
\begin{tikzpicture}[node distance=10em]
  \node (Form) {$\polyspace{}{}\Lambda^{k}(T)$};
  \node (PlusForm) [right of= Form] {$\polyspace{}{}\Lambda^{k+1}(T)$};
  \draw[->] (Form) to node [above] {$\rd$} (PlusForm);

  \node (JForm) [below of=Form]  {$\polyspace{}{}\Lambda^{k}(T)$};
  \node (JPlusForm) [right of=JForm] {$\polyspace{}{}\Lambda^{k+1}(T)$};

  \draw[->] (Form.255) to node [left]{$H_{T}^k$} (JForm.105);
  \draw[->] (JForm.75) to node [right]{$\Pi_{T}^k \mathrm{e}^{\psi}$} (Form.285);
  \draw[->] (PlusForm.255) to node [left]{$H_{T}^{k+1}$}
  (JPlusForm.105);
  \draw[->] (JPlusForm.75) to node [right]{$\Pi_{T}^{k+1}\mathrm{e}^{\psi}$}
  (PlusForm.285);

  \draw[->] (JForm) to node [above] {$J_{\nabla \psi, T}$} (JPlusForm);
\end{tikzpicture}
\end{center}

For the general convection $\theta(\bm{x})$, we take a {\it piecewise
constant approximation} $\bar{\theta}$, namely, $\bar{\theta}|_T$ is a
constant vector for every $T \in \mathcal{T}_h$. Taking $\psi(\bm{x})
= \bar{\theta}(\bm{x}) \cdot \bm{x}$ in \eqref{eq:numerical-flux}, the
bilinear form \eqref{eq:bilinear-J} has the approximation
\begin{equation} \label{eq:SAFE-a}
\begin{aligned}
a_h(u_h, v_h) & =
\sum_{T\in \mathcal{T}_h} (\alpha J_{\bar{\theta},T}
    u_h, \rd v_h)_T + (i_{\beta - \alpha\bar{\theta}}^* u_h, \rd v_h)_T \\
&\quad + (\gamma u_h, v_h) + s_h(u_h, v_h).
\end{aligned}
\end{equation}
where $s_h(\cdot, \cdot)$ is a proper stabilization term. In this
paper, we simply take $s_h(\cdot,\cdot) = 0$, which is acceptable for
many cases. It is our future work to design and analysis $s_h(\cdot,
\cdot)$ for general $\alpha$ and $\beta$. When the diffusion
coefficient $\alpha$ is piecewise constant, we have $\bar{\theta} =
\bar{\beta} / \alpha$ and therefore $\beta - \alpha \bar{\theta} =
\beta - \bar{\beta}$, which corresponds to convection speed of the
local perturbation of $\beta$. In particular, with the piecewise
constant diffusion coefficient $\alpha$ and convection speed $\beta$,
the term $i_{\beta - \alpha\bar{\theta}}^* u_h$ vanishes. We emphasis
that the first part in \eqref{eq:SAFE-a}, namely $(\alpha
J_{\bar{\theta}, T} u_h, \rd v_h)_T$, can be discretized via
simplex-averaged finite element (SAFE) method \cite{2019WuXu-a}. The
construction above gives a promising way for deriving higher order
SAFE schemes which can approximate accurately in the convection
dominating case for $k$-forms.

%%%%%%%%%%%%%%%%%%%%%%%%%%%%%%%%%%%%%%%%%%%%%%%%%%%%%%%%%%%%%%%%%%%%%%%%%%%%%%%%%%
% Numerics
%%%%%%%%%%%%%%%%%%%%%%%%%%%%%%%%%%%%%%%%%%%%%%%%%%%%%%%%%%%%%%%%%%%%%%%%%%%%%%%%%%
\section{Numerical tests}\label{s:numerics}

%% Numerics

In this section, we test the performance of the exponential fitting scheme using polynomials of  degree $\le 2$ for the scalar convection-diffusion
equation~\eqref{equ:grad-conservative}. The following discrete de Rham
sequence is applied:
\begin{equation} \label{eq:P2-sequence}
\mathcal{P}_2 \Lambda^0 \xrightarrow{{\rm grad}} \mathcal{P}_1
\Lambda^1 \xrightarrow{{\rm curl}} \mathcal{P}_0\Lambda^2,
\end{equation}
where the 2D ${\rm curl}$ operator is defined by $\curl v =
(\partial_{y} v, -\partial_{x} v)^T$.

Below we report two sets of numerical tests: one on the the convergence order of the exponential fitting scheme and the other on the performance of this scheme in the
convection dominating case. In all tests, we take $\gamma = 0$ in \eqref{equ:grad-conservative}, i.e.,
\begin{equation} \label{eq:conv-diff-test}
-\nabla \cdot(\alpha \nabla u + \beta u ) = f.
\end{equation}
The computational domain is the square $\Omega = (0,1)^2$, and we impose Dirichlet boundary conditions on the boundary $\partial\Omega$.
We use uniform meshes with varying mesh sizes for all numerical tests and consider piecewise constant diffusion coefficients. In such case we have
$\bar{\theta}|_T = (\beta(x_c) / \alpha)|_T$
on each element $T$, where $x_c$ is the barycenter of $T$.

Finally, the implementation issues pertinent to the computation of the numerical fluxes $(\alpha J_{\bar{\theta},T}u_h, \nabla
v_h)_T$ in \eqref{eq:SAFE-a}, which hinge on computing the generalized Bernoulli functions are discussed in Appendix~\ref{sec:implementation}. By computing the corresponding limits we show that the resulting scheme is a special upwind scheme for the limiting case of vanishing diffusion coefficient.

\subsection{Convergence order test} In the first
set of examples we consider a scalar convection diffusion equation with exact solution
$$
u = \mathrm{e}^{x-y}\sin(\pi x) \cos(\pi y).
$$
The constant diffusion
coefficients range from $10$ to $10^{-5}$ and
the convection speed is set to $\beta = (1, 2)$ or $\beta = (-y,x)$. The right hand side $f$ for each example is computed  using this data.
\begin{table}[!htbp]
\centering
\captionsetup{justification=centering}
\subfloat[$\alpha=10$]{
\footnotesize
\begin{tabular}{|l|cc|cc|}
\hline
$1/h$ & $\|\epsilon_h\|_0$ & $h^n$ &
$|\epsilon_h|_1$ & $h^n$\\ \hline
4  & 7.696e-03  & --- & 1.143e-01  & --- \\
8  & 9.676e-04  & 2.99 & 2.914e-02 & 1.97 \\
16 & 1.218e-04  & 2.99 & 7.320e-03 & 1.99 \\
32 & 1.531e-05 & 2.99 & 1.832e-03  & 2.00 \\
64 & 1.945e-06  & 2.98  & 4.582e-04 & 2.00 \\  \hline
\end{tabular}
\label{tab:constant-beta1}
}
\subfloat[$\alpha=10^{-1}$] {
\footnotesize
\begin{tabular}{|l|cc|cc|}
\hline
$1/h$ & $\|\epsilon_h\|_0$ & $h^n$ &
$|\epsilon_h|_1$ & $h^n$\\ \hline
 4   & 2.990e-02  & ---  & 6.537e-01   &--- \\
 8   & 3.445e-03   &3.12   &1.710e-01   &1.93 \\
 16 & 2.920e-04   &3.56   &2.935e-02   &2.54 \\
 32 & 4.432e-05   &2.72   &4.309e-03   &2.77 \\
64 & 1.118e-05   & 1.99   &6.839e-04   & 2.66 \\ \hline
\end{tabular}
\label{tab:constant-beta2}
} \\
\subfloat[$\alpha=10^{-3}$] {
\footnotesize
\begin{tabular}{|l|cc|cc|}
\hline
$1/h$ & $\|\epsilon_h\|_0$ & $h^n$ &
$|\epsilon_h|_1$ & $h^n$\\ \hline
4   & 5.733e-02   & ---  & 1.075e+00  & --- \\
8   & 1.435e-02   & 2.00  & 5.354e-01  & 1.01 \\
16 & 3.449e-03   & 2.06  & 2.645e-01  & 1.02 \\
32 & 8.206e-04   & 2.07  & 1.300e-01  & 1.02 \\
64 & 1.910e-04   & 2.10  & 6.297e-02  & 1.05 \\ \hline
\end{tabular}
\label{tab:constant-beta3}
}
\subfloat[$\alpha=10^{-5}$] {
\footnotesize
\begin{tabular}{|l|cc|cc|}
\hline
$1/h$ & $\|\epsilon_h\|_0$ & $h^n$ &
$|\epsilon_h|_1$ & $h^n$\\ \hline
4   & 5.769e-02 &  --- & 1.079e+00 & --- \\
8   & 1.457e-02  & 1.99 & 5.390e-01 & 1.00 \\
16  & 3.560e-03 & 2.03 & 2.679e-01 & 1.01 \\
32  & 8.745e-04 & 2.03 & 1.335e-01 & 1.00 \\
64  & 2.162e-04 & 2.02 & 6.664e-02 & 1.00 \\ \hline
\end{tabular}
\label{tab:constant-beta4}
}
\caption{The error, $\epsilon_h = u - u_h$, and convergence order for
$\beta = (1, 2)$.} \label{tab:constant-beta}
\end{table}

For the constant convection speed, we observe from Table
\ref{tab:constant-beta} that, $\|u - u_h\|_{H^1} = \mathcal{O}(h^2)$
and  $\|u - u_h\|_{L^2} = \mathcal{O}(h^3)$ for diffusion dominating
case ($\alpha = 10$). For the convection dominated case ($\alpha = 10^{-5}$), we
observe a first-order convergence in $H^1$ norm and second-order
convergence in $L^2$ norm, which is sub-optimal. The convergence
orders change accordingly with the transition from diffusion dominating
case to convection dominating case. Moreover, for a solution without
boundary or internal layer, the convergence orders can be clearly
observed for coarse meshes regardless of the magnitude of diffusion
coefficient. A similar result can be observed for the variable
convection speed; see Table \ref{tab:variable-beta}.

\begin{table}[!htbp]
\centering
\captionsetup{justification=centering}
\subfloat[$\alpha=10$]{
\footnotesize
\begin{tabular}{|l|cc|cc|}
\hline
$1/h$ & $\|\epsilon_h\|_0$ & $h^n$ &
$|\epsilon_h|_1$ & $h^n$\\ \hline
4   & 7.691e-03  & ---  & 1.142e-01 &  --- \\
8   & 9.671e-04  & 2.99  & 2.913e-02  & 1.97 \\
16 & 1.216e-04  & 2.99  & 7.320e-03  & 1.99 \\
32 & 1.525e-05  & 3.00  & 1.832e-03  & 2.00 \\
64 & 1.914e-06  & 2.99  & 4.582e-04  & 2.00 \\ \hline
\end{tabular}
\label{tab:variable-beta1}
}
\subfloat[$\alpha=10^{-1}$] {
\footnotesize
\begin{tabular}{|l|cc|cc|}
\hline
$1/h$ & $\|\epsilon_h\|_0$ & $h^n$ &
$|\epsilon_h|_1$ & $h^n$\\ \hline
  4   &  8.037e-03 &  ---  &  1.405e-01 &  ---       \\
8   &  1.171e-03 &  2.78 &  3.269e-02 &  2.10 \\
16  &  2.590e-04 &  2.18 &  7.798e-03 &  2.07 \\
32  &  6.358e-05 &  2.03 &  1.917e-03 &  2.02 \\
64  &  1.584e-05 &  2.00 &  4.772e-04 &  2.01 \\ \hline
\end{tabular}
\label{tab:variable-beta2}
} \\
\subfloat[$\alpha=10^{-3}$]{
\footnotesize
\begin{tabular}{|l|cc|cc|}
\hline
$1/h$ & $\|\epsilon_h\|_0$ & $h^n$ &
$|\epsilon_h|_1$ & $h^n$\\ \hline
4   &  4.731e-02  & ---    &  9.355e-01 & ---   \\
8   & 1.142e-02   & 2.05 & 5.796e-01 & 0.69 \\
16  & 2.372e-03  & 2.27 & 2.743e-01 & 1.08 \\
32  & 3.713e-04  & 2.68 & 1.012e-01 & 1.44 \\
64   & 6.023e-05 & 2.62 & 3.556e-02 & 1.51 \\ \hline
\end{tabular}
\label{tab:variable-beta3}
}
\subfloat[$\alpha=10^{-5}$] {
\footnotesize
\begin{tabular}{|l|cc|cc|}
\hline
$1/h$ & $\|\epsilon_h\|_0$ & $h^n$ &
$|\epsilon_h|_1$ & $h^n$\\ \hline
4    & 4.880e-02 &  ---     & 9.557e-01 &  ---     \\
8    & 1.197e-02 &  2.03  & 5.917e-01 &  0.69 \\
16  & 2.757e-03 &  2.12  & 3.162e-01 &  0.90 \\
32  & 6.593e-04 &  2.06  & 1.611e-01 &  0.97  \\
64  & 1.661e-04 &  1.99  & 8.215e-02 &  0.97 \\ \hline
\end{tabular}
\label{tab:variable-beta4}
}
\caption{The error, $\epsilon_h = u - u_h$, and convergence order for
$\beta = (-y, x)$. } \label{tab:variable-beta}
\end{table}

\subsection{Solutions with interior or boundary layers}
We next test the performance of the exponential fitting scheme for problems whose solutions exhibit interior or boundary layers. We consider the equation
\eqref{eq:conv-diff-test} subject to the homogeneous Dirichlet
boundary conditions. Again,
we take $\beta = (1, 2)$, $f = 1$, and fix the mesh size $h = 2^{-6}$. In this set of examples, we vary the diffusion coefficient as
$$
\text{Case 1: } \alpha = 10^{-6}, \qquad
\text{Case 2: }
\alpha =
\begin{cases}
1 & x < 0.5, \\
  10^{-3} & x > 0.5.
\end{cases}
$$
The numerical solutions are shown in  Figures \ref{fig:bdlayer} and \ref{fig:inlayer}.  For the constant diffusion coefficient case, the
ratio $h/\alpha = 15625$, which is rather large compared to the convection speed.  It is clearly seen that there are no spurious  oscillations or smearing near the boundary layer (Case 1) or the internal layer
(Case 2). We remark that for high order exponential fitting schemes, which are not necessarily monotone, stabilization terms may be needed in the bilinear form, especially in case of general $\alpha$ and $\beta$.

\begin{figure}[!htbp]
\centering
\captionsetup{justification=centering}
\subfloat[Case 1: boundary layer]{
  \includegraphics[width=0.49\textwidth]{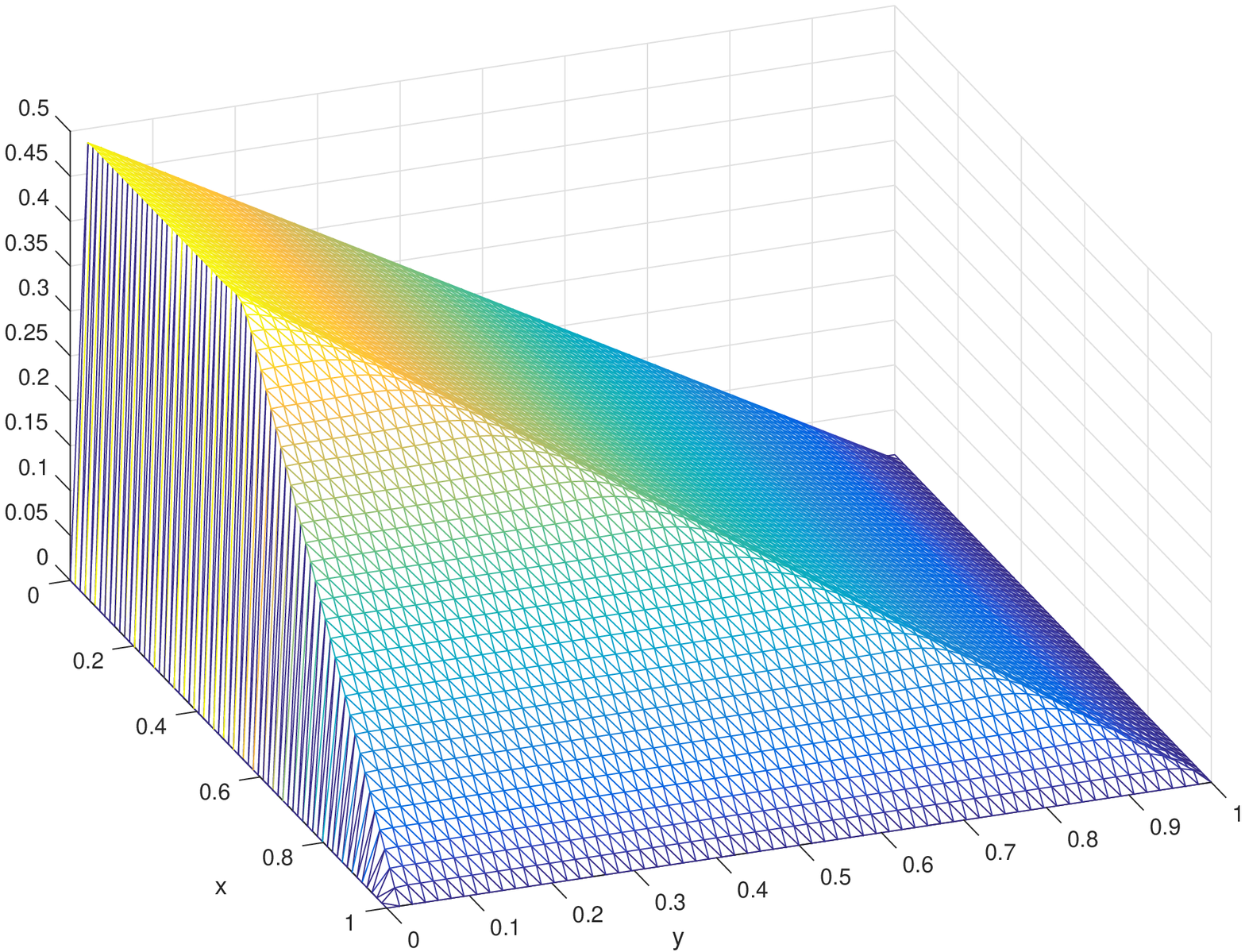}
  \label{fig:bdlayer}
}%
\subfloat[Case 2: interior layer]{
  \includegraphics[width=0.49\textwidth]{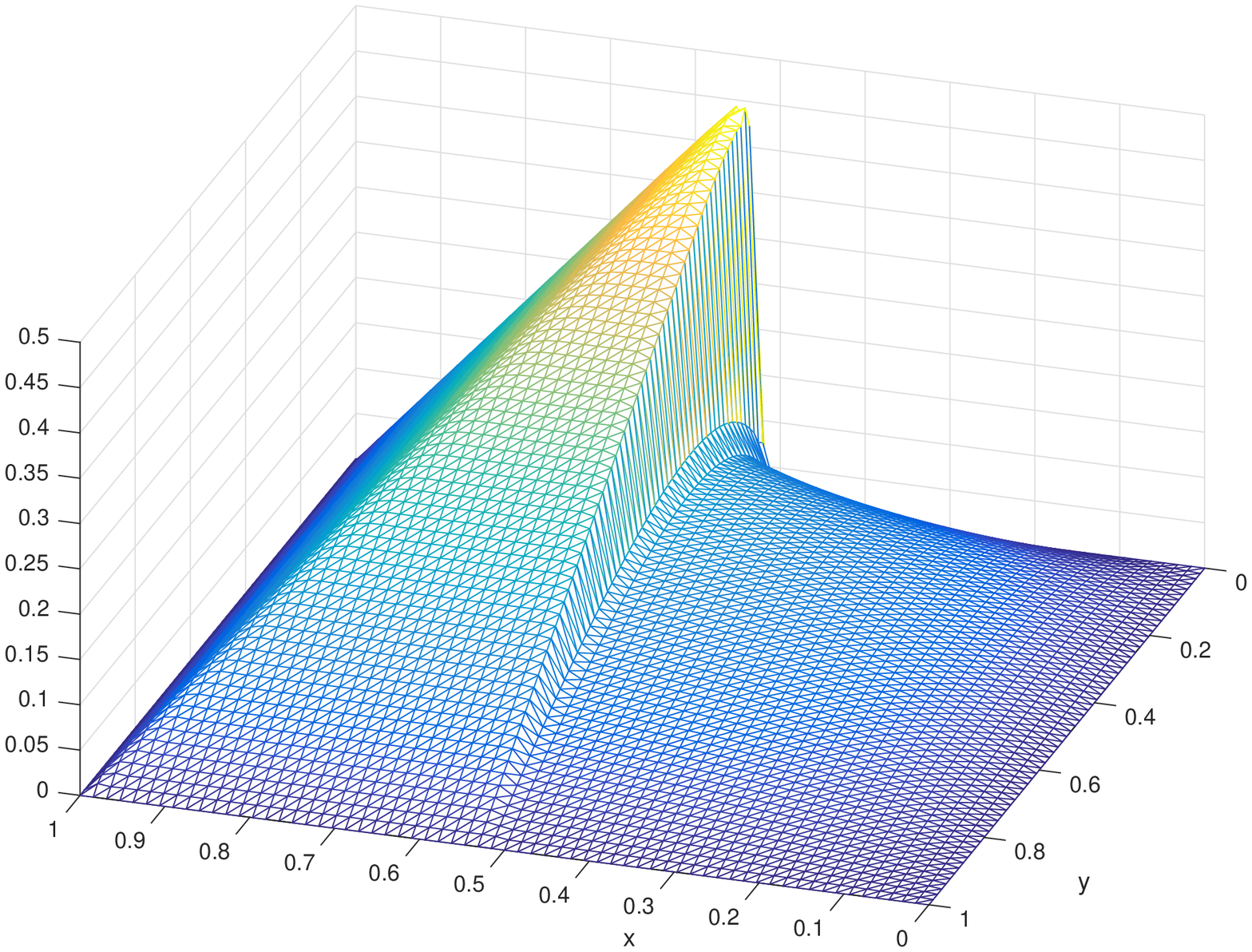}
  \label{fig:inlayer}
}
\caption{Surface plots of the numerical solutions.}
\end{figure}

\appendix
\section{Implementation issues for the scalar convection diffusion problems \eqref{equ:grad-conservative} with \eqref{eq:P2-sequence}}
\label{sec:implementation}
We now discuss the computation of the discrete flux in \eqref{eq:SAFE-a} including the limiting case when the
diffusion coefficient approaches zero.

\subsection{Local basis functions of $\mathcal{P}_2\Lambda^0$ and
$\mathcal{P}_1\Lambda^1$}
Given $T\in \mathcal{T}_h$ with vertices $[q_0, q_1, q_2]$, the degrees of freedom (the set of unisolvent functionals) corresponting to the $\mathcal{P}_2\Lambda^0(T)$ Lagrange
element are the function evaluations at $q_i~(i=1,2,3)$ and the integral averages on edges $f_{ij}$, where $f_{ij}$ represents the edge connecting two vertices $q_i$ and $q_j$.
The basis functions dual this set of degrees of freedom are
\begin{equation} \label{eq:basis-Lagrange}
\varphi_i = \lambda_i (3\lambda_i - 2), \quad \varphi_{ij} =
6\lambda_i\lambda_j.
\end{equation}
The space $\mathcal{P}_1\Lambda^1$, as a subspace of $H({\rm curl})$,
corresponds to the $\frac{\pi}{2}$-rotation of the well known BDM space. We introduce the tangential vectors $t_{ij} = q_j - q_i$ and $\tau_{ij} = \frac{t_{ij}}{|t_{ij}|}$.  The
set of degrees of freedom determining a function $v \in
\mathcal{P}_1\Lambda^1(T)$ then is
$$
\int_{f_{ij}} (v \cdot \tau_{ij}) p, \quad \forall p \in
\mathcal{P}_{1}(f_{ij}).
$$
Taking a basis of $\mathcal{P}_1(f_{ij})$ as $3\lambda_i -2$ and
$3\lambda_j - 2$, the basis function associated with the degrees of freedom on $f_{ij}$ are given as
\begin{equation} \label{eq:basis-BDM}
\psi_{ij}^{(1)} = 2\lambda_j \nabla\lambda_i,
  \quad
\psi_{ij}^{(2)} = -2\lambda_i \nabla\lambda_j.
\end{equation}
% where $\nu_i$ is the unit normal vector opposite to the vertex $x_i$.
We note that these basis functions satisfy $\psi_{ji}^{(1)} = - \psi_{ij}^{(2)},
\psi_{ji}^{(2)} = -\psi_{ij}^{(1)}$.

\subsection{Local representation of the discrete flux}
We now recall that the definition of the discrete flux in
\eqref{eq:numerical-flux} which, for the scalar
convection-diffusion problem, is $\alpha J_{\bar{\theta},T} = \alpha (\Pi_T^1 \mathrm{e}^{\bar{\theta}\cdot x})^{-1} \nabla \Pi_T^0
\mathrm{e}^{\bar{\theta}\cdot x}$.
As we pointed out earlier, for the sake of simplicity, we consider the piecewise constant diffusion coefficient $\alpha$, which implies that $\bar{\theta} =
\bar{\beta} / \alpha$. Then, a straightforward  calculation shows that
$$
\begin{aligned}
\nabla \Pi_T^0 \mathrm{e}^{\bar{\theta} \cdot x} \varphi_i &=
\mathrm{e}^{\bar{\theta} \cdot q_i} \nabla \varphi_i +
\sum_{j\neq i} \left( \dashint_{f_{ij}} \lambda_i(3\lambda_i - 2)
\mathrm{e}^{\bar{\theta} \cdot x} \right) \nabla \varphi_{ij} \\
&= \mathrm{e}^{\bar{\theta} \cdot q_i} \sum_{j\neq i} \left[
(-\psi_{ij}^{(1)} +
2\psi_{ij}^{(2)}) + 3 \left( \dashint_{f_{ij}} \lambda_i(3\lambda_i - 2)
\mathrm{e}^{\bar{\theta} \cdot (x-q_i)}\right)(\psi_{ij}^{(1)} -
\psi_{ij}^{(2)}) \right] \\
&= \mathrm{e}^{\bar{\theta} \cdot q_i} \sum_{j \neq i}
  \left[(3V(\bar{\theta}\cdot t_{ij}) - 1) \psi_{ij}^{(1)} +
  (3V(\bar{\theta}\cdot t_{ij}) + 2) \psi_{ij}^{(2)}\right].
\end{aligned}
$$
After mapping to the unit interval, we can compute the integrals to obtain that
\begin{equation} \label{eq:vertex-exp}
V(s) :=  \int_{0}^1 (1-x)(1-3x) \mathrm{e}^{sx} \,\rd x = -
  \frac{2s\mathrm{e}^{s} - 6\mathrm{e}^{s} + s^2 + 4s+ 6}{s^3}.
\end{equation}
Next, for the basis functions of $\mathcal{P}_2\Lambda^0(T)$ associated with the edge average, we have
$$
\begin{aligned}
\nabla \Pi^0 \mathrm{e}^{\bar{\theta} \cdot x} \varphi_{ij} &=
  \dashint_{f_{ij}} 6\lambda_i\lambda_j \mathrm{e}^{\bar{\theta} \cdot
  x} ds \nabla \varphi_{ij} \\
&= 3 \mathrm{e}^{\bar{\theta} \cdot q_i} \left( \dashint_{f_{ij}}
  6\lambda_i\lambda_j  \mathrm{e}^{\bar{\theta} \cdot (x-q_i)}\right)
  (\psi_{ij}^{(1)} - \psi_{ij}^{(2)}) \\
&= \mathrm{e}^{\bar{\theta} \cdot q_i} \left[ 3E(\bar{\theta}\cdot
  t_{ij}) \psi_{ij}^{(1)} - 3E(\bar{\theta}\cdot t_{ij}) \psi_{ij}^{(2)}
  \right],
\end{aligned}
$$
where
\begin{equation} \label{eq:edge-exp}
E(s) :=  \int_0^1 6x(1-x) \mathrm{e}^{sx} \,\rd x =
  \frac{6s\mathrm{e}^s - 12 \mathrm{e}^s + 6s + 12}{s^3}.
\end{equation}

For the $1$-forms, we consider the exponential fitting corresponding to the basis functions
$\mathcal{P}_1\Lambda^1(T)$. We have,
$$
\begin{aligned}
\Pi^1 \mathrm{e}^{\bar{\theta}\cdot x}
\begin{bmatrix}
\psi_{ij}^{(1)} \\
\psi_{ij}^{(2)}
\end{bmatrix} & =
\begin{bmatrix}
 \dashint_{f_{ij}} -2\lambda_j (3\lambda_i - 2) \mathrm{e}^{\bar{\theta}\cdot x} &
 \dashint_{f_{ij}} -2\lambda_j (3\lambda_j - 2) \mathrm{e}^{\bar{\theta}\cdot x} \\
 \dashint_{f_{ij}} -2\lambda_i (3\lambda_i - 2) \mathrm{e}^{\bar{\theta}\cdot x} &
 \dashint_{f_{ij}} -2\lambda_i (3\lambda_j - 2) \mathrm{e}^{\bar{\theta}\cdot x}
\end{bmatrix}
\begin{bmatrix}
\psi_{ij}^{(1)} \\
\psi_{ij}^{(2)}
\end{bmatrix} \\
&= \mathrm{e}^{\bar{\theta} \cdot q_i}
\begin{bmatrix}
a_{11}(\bar{\theta}\cdot t_{ij}) &
a_{12}(\bar{\theta}\cdot t_{ij}) \\
a_{21}(\bar{\theta}\cdot t_{ij}) &
a_{22}(\bar{\theta}\cdot t_{ij})
\end{bmatrix}
\begin{bmatrix}
\psi_{ij}^{(1)} \\
\psi_{ij}^{(2)}
\end{bmatrix}
:= \mathrm{e}^{\bar{\theta} \cdot q_i}  \bm{A}(\bar{\theta}\cdot t_{ij})
\begin{bmatrix}
\psi_{ij}^{(1)} \\
\psi_{ij}^{(2)}
\end{bmatrix},
\end{aligned}
$$
where $\bm{A}(s)$ is the matrix with entries defined by
\begin{equation}\label{eq:BDM-exp}
\begin{aligned}
a_{11}(s) &:= \int_0^1 -2 x (1 - 3x) \mathrm{e}^{sx}\,\rd x =
  \frac{4s^2\mathrm{e}^s - 10s\mathrm{e}^s + 12\mathrm{e}^s - 2s -
  12}{s^3}, \\
a_{12}(s) &:= \int_0^1 -2 x (3x - 2) \mathrm{e}^{sx}\,\rd x =
  -\frac{2s^2 \mathrm{e}^s - 8s \mathrm{e}^s + 12\mathrm{e}^s - 4s -
  12}{s^3}, \\
a_{21}(s) &:= \int_0^1 -2(1-x)(1-3x) \mathrm{e}^{sx}\,\rd x = \frac{4s
  \mathrm{e}^s - 12\mathrm{e}^s + 2s^2 + 8s + 12}{s^3}, \\
a_{22}(s) &:= \int_0^1 -2(1-x)(3x-2) \mathrm{e}^{sx}\,\rd x =
  -\frac{2s \mathrm{e}^s - 12 \mathrm{e}^s + 4s^2 + 10s + 12}{s^3}.
\end{aligned}
\end{equation}

Using the calculation above, and recalling that $\bar{\theta} = \bar{\beta} / \alpha$ for the piecewise constant diffusion coefficient, we arrive at the following representation of the numerical flux
\begin{equation} \label{eq:flux-EAFE}
\begin{aligned}
\alpha J_{\bar{\theta}, T} \varphi_i &= \sum_{j \neq i} \left[
  B_{V,1}^\alpha(\bar{\beta}\cdot t_{ij}) \psi_{ij}^{(1)} +
  B_{V,2}^\alpha(\bar{\beta}\cdot t_{ij}) \psi_{ij}^{(2)}\right],\\
\alpha J_{\bar{\theta}, T}\varphi_{ij} &= B_{E,1}^\alpha(\bar{\beta}
  \cdot t_{ij}) \psi_{ij}^{(1)} + B_{E,2}^\alpha(\bar{\beta} \cdot t_{ij})
  \psi_{ij}^{(2)},
\end{aligned}
\end{equation}
where, the {\it generalized Bernoulli functions} are defined via
$V(\cdot)$ in \eqref{eq:vertex-exp}, $E(\cdot)$ in \eqref{eq:edge-exp},
and $\bm{A}(\cdot)$ in \eqref{eq:BDM-exp} as follows
\begin{equation} \label{eq:Bernoulli}
\begin{aligned}
(B_{V,1}^\alpha(s), B_{V,2}^\alpha(s)) &:= \alpha (3V(s/\alpha)-1,
  3V(s/\alpha)+2) \left[\bm{A}(s/\alpha)\right]^{-1}, \\
(B_{E,1}^\alpha(s), B_{E,2}^\alpha(s))&: = \alpha (3E(s/\alpha),
  -3E(s/\alpha)) \left[\bm{A}(s/\alpha)\right]^{-1}.
\end{aligned}
\end{equation}
As a consequence, the formula for the discrete flux \eqref{eq:flux-EAFE}, provides the practical method for calculating the local stiffness matrix
$(\alpha J_{\bar{\theta}, T} u_h, \nabla v_h)_T$.

\subsection{Limiting case for vanising diffusion coefficient}
When $s/\alpha$ is a large quantity
the some exponential functions in the definition of
$V(\cdot)$, $E(\cdot)$ and $\bm{A}(\cdot)$ might be difficult to compute directly. We now provide formulae which can be used in the limiting case. As a side result, these formulae give the behaviour of the numerical scheme in case of vanishing diffusion.
In summary, we will compute the limits of generalized Bernoulli functions in \eqref{eq:Bernoulli} as the
diffusion coefficient approaches zero, and, in addition we shall show the
consistency of the bilinear form \eqref{eq:SAFE-a} to the one corresponding to the pure diffusion case as the convection vanishes.
Such calculations are based on the following lemma, whose proof is elementary and omitted here.

\begin{lemma} \label{lm:Bernoulli-limit}
The generalized Bernoulli functions \eqref{eq:Bernoulli} have the
properties:
\begin{enumerate}
\item For a fixed $\alpha$, it holds that
\begin{equation} \label{eq:limit-s0}
\begin{aligned}
\lim_{s \to 0}(B_{V,1}^\alpha(s), B_{V,2}^{\alpha}(s)) &= (-\alpha,
  2\alpha), \\
\lim_{s \to 0}(B_{E,1}^\alpha(s), B_{E,2}^{\alpha}(s)) &= (3\alpha,
  -3\alpha).
\end{aligned}
\end{equation}
\item For a fixed $s$, it holds that
\begin{equation} \label{eq:limit-alpha0}
\begin{aligned}
\lim_{\alpha \to 0^+} (B_{V,1}^\alpha(s), B_{V,2}^{\alpha}(s))  &=
\begin{cases}
(0, s) & s > 0, \\
(\frac{3s}{2}, -\frac{s}{2}) & s < 0.
\end{cases} \\
\lim_{\alpha \to 0^+} (B_{E,1}^\alpha(s), B_{E,2}^{\alpha}(s)) &=
\begin{cases}
(0, -3s) & s > 0, \\
(-3s, 0) &  s< 0.
\end{cases}
\end{aligned}
\end{equation}
\end{enumerate}
\end{lemma}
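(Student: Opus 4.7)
The plan is to establish both parts by elementary but careful asymptotic analysis of the closed-form rational expressions \eqref{eq:vertex-exp}, \eqref{eq:edge-exp}, and \eqref{eq:BDM-exp} for $V$, $E$, and the four entries $a_{ij}$ of $\bm{A}$. The key structural observation, valid throughout, is that each of these functions is a sum of an ``exponential part'' proportional to $e^t$ and a ``polynomial part'' in $1/t$, and that exactly one of these dominates in each asymptotic regime.

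For part (i), with $\alpha$ fixed and $s \to 0$ (so $t := s/\alpha \to 0$), each of $V$, $E$, $a_{ij}$ is continuous at the removable singularity $t = 0$. Expanding $e^t$ as a Taylor polynomial in the numerators of the rational expressions cancels the $t^3$ denominator, and one reads off $V(0)=0$, $E(0)=1$, and $\bm{A}(0) = I$. These values can equally well be obtained by substituting $s=0$ directly into the original integrals defining $V$, $E$, $a_{ij}$ and using the elementary moments of $\lambda_i$, $\lambda_j$ on the unit interval. Inserting into the definition \eqref{eq:Bernoulli} of the Bernoulli functions yields the first set of limits.

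For part (ii), I substitute $t = s/\alpha$, so that $t\to +\infty$ when $s>0$ and $t\to -\infty$ when $s<0$, and split the analysis accordingly. Writing $\bm{A}(t) = (e^t/t^3)\bm{A}_e(t) + (1/t^3)\bm{A}_p(t)$ with $\bm{A}_e$, $\bm{A}_p$ polynomial matrices, a direct calculation yields the identities
\[
\det \bm{A}_e(t) \;=\; 12\,t^2 \;=\; \det \bm{A}_p(t).
\]
In the regime $t\to -\infty$, the exponential terms $e^t$ are negligible, so $\bm{A}(t) \sim \bm{A}_p(t)/t^3$, $\det\bm{A}(t) \sim 12/t^4$, and similarly $3V(t) - 1, 3V(t) + 2, 3E(t)$ each reduce to their polynomial parts. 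One then computes $\bm{A}(t)^{-1}$ to sufficient order (its entries become polynomials of degree at most three in $t$ modulo exponentially small corrections), forms the row-vector products, and multiplies by $\alpha = s/t$ to obtain the claimed limits for $s<0$. In the regime $t\to +\infty$, the exponential parts dominate: $\bm{A}(t) \sim (e^t/t^3)\bm{A}_e(t)$, so $\bm{A}(t)^{-1} \sim (t^3/e^t)\bm{A}_e(t)^{-1}$, while $(3V(t)-1, 3V(t)+2)$ and $(3E(t), -3E(t))$ each grow like $e^t/t^2$ times a rational function of $t$. Multiplying, the factors $e^t$ and $e^{-t}$ cancel exactly, leaving rational expressions in $t$ whose leading behavior, multiplied by $\alpha = s/t$, produces the claimed limits for $s>0$.

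The main technical obstacle I anticipate is that in both the $t \to +\infty$ and $t \to -\infty$ regimes the naive leading term of $\bm{A}(t)$ (taking only the highest power of $e^t$ or of $1/t$, respectively) is a rank-one matrix, so one cannot invert it at leading order alone. The resolution is the pair of identities $\det \bm{A}_e = \det \bm{A}_p = 12\,t^2$ displayed above: these nonzero polynomial determinants guarantee that the ``exponential part'' $\bm{A}_e$ and the ``polynomial part'' $\bm{A}_p$ are invertible as rational matrix-valued functions, so $\bm{A}(t)^{-1}$ can be computed with sufficient precision to extract the limit. Once this structural point is in hand, the remainder of the proof is routine polynomial arithmetic, which is why the result is stated without proof in the paper.
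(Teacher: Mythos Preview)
The paper does not prove this lemma: it states explicitly that the proof ``is elementary and omitted here.'' Your proposal therefore cannot be compared against a written argument, but it does supply exactly the kind of elementary verification the authors have in mind. For part~(i) your observation that the integral definitions immediately give $V(0)=0$, $E(0)=1$, and $\bm{A}(0)=I$ is correct and is the cleanest route. For part~(ii) your decomposition $\bm{A}(t) = (e^t/t^3)\bm{A}_e(t) + (1/t^3)\bm{A}_p(t)$ and the identities $\det\bm{A}_e(t)=\det\bm{A}_p(t)=12t^2$ are correct and are precisely the structural facts needed to invert $\bm{A}(t)$ to sufficient accuracy in each regime, circumventing the rank-one degeneracy you flag. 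Since the paper offers no alternative argument, your plan \emph{is} the intended elementary computation; what remains is only the polynomial arithmetic you describe.
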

The limits  in Lemma~\ref{lm:Bernoulli-limit} lead to the following conclusions:
Firstly, there exists a stable implementation of the generalized
Bernoulli functions by using the limiting values for large $s$. Secondly, the  discrete flux $\alpha J_{\bar{\theta},T}$
\eqref{eq:flux-EAFE} and the resulting exponentially fitted discretization are well defined when the diffusion coefficient approaches zero.
% Moreover, the resulting scheme is a special upwind scheme according to limit of generalized Bernoulli functions.

\section*{Acknowledgements}
Part of this work was completed when the second author was visiting the School of Mathematical Sciences at Peking University and we thank Prof. Jun Hu (PKU) and Prof. Chensong Zhang (LSEC) for the discussions during the preparation of this manuscript.

The work of Shuonan Wu is supported in part by the National Natural Science Foundation of China grant No.~11901016 and the startup grant from Peking University. The work of Zikatanov is supported in part by the US National Science Foundation awards DMS-1720114 and DMS-1819157.

\bibliographystyle{amsplain}
\bibliography{bib_eafe}

\providecommand{\bysame}{\leavevmode\hbox to3em{\hrulefill}\thinspace}
\providecommand{\MR}{\relax\ifhmode\unskip\space\fi MR }
% \MRhref is called by the amsart/book/proc definition of \MR.
\providecommand{\MRhref}[2]{%
  \href{http://www.ams.org/mathscinet-getitem?mr=#1}{#2}
}
\providecommand{\href}[2]{#2}
\begin{thebibliography}{10}

\bibitem{2006ArnoldFalkWinther-a}
Douglas~N. Arnold, Richard~S. Falk, and Ragnar Winther, \emph{Finite element
  exterior calculus, homological techniques, and applications}, Acta Numer.
  \textbf{15} (2006), 1--155. \MR{2269741 (2007j:58002)}

\bibitem{2010ArnoldFalkWinther-a}
\bysame, \emph{Finite element exterior calculus: from {H}odge theory to
  numerical stability}, Bull. Amer. Math. Soc. (N.S.) \textbf{47} (2010),
  no.~2, 281--354. \MR{2594630 (2011f:58005)}

\bibitem{1998BankCoughranCowsar-a}
Randolph~E. Bank, W.M. Coughran, Jr., and Lawrence~C. Cowsar, \emph{The finite
  volume scharfetter-gummel method for steady convection diffusion equations},
  Computing and Visualization in Science \textbf{1} (1998), no.~3, 123--136
  (English).

\bibitem{2017BankVassilevskiZikatanov-a}
Randolph~E. Bank, Panayot~S. Vassilevski, and Ludmil~T. Zikatanov,
  \emph{Arbitrary dimension convection-diffusion schemes for space-time
  discretizations}, J. Comput. Appl. Math. \textbf{310} (2017), 19--31.
  \MR{3544587}

\bibitem{2019BertrandDemkowiczGopalakrishnanHeuer-a}
Fleurianne Bertrand, Leszek Demkowicz, Jay Gopalakrishnan, and Norbert Heuer,
  \emph{Recent advances in least-squares and discontinuous {P}etrov-{G}alerkin
  finite element methods}, Comput. Methods Appl. Math. \textbf{19} (2019),
  no.~3, 395--397. \MR{3977479}

\bibitem{1988Bossavit-a}
Alain Bossavit, \emph{A rationale for edge elements in {3}{D} field
  computations,}, IEEE Trans. Mag. \textbf{24} (1988), 74--79.

\bibitem{1996BrezziMariniPietraRusso-a}
F.~Brezzi, D.~Marini, P.~Pietra, and A.~Russo, \emph{A monotonic scheme for
  advection-diffusion problems}, Proceedings of the {S}econd {I}nternational
  {W}orkshop on {N}onlinear {K}inetic {T}heories and {M}athematical {A}spects
  of {H}yperbolic {S}ystems ({S}anremo, 1994), vol.~25, 1996, pp.~463--475.
  \MR{1407547}

\bibitem{2006BrezziMariniMichelettiPietraSacco-a}
F.~Brezzi, L.~D. Marini, S.~Micheletti, P.~Pietra, and R.~Sacco,
  \emph{Stability and error analysis of mixed finite-volume methods for
  advection dominated problems}, Comput. Math. Appl. \textbf{51} (2006), no.~5,
  681--696. \MR{2224675}

\bibitem{1985BrezziDouglasMarini-a}
Franco Brezzi, Jim Douglas, Jr., and L.~D. Marini, \emph{Two families of mixed
  finite elements for second order elliptic problems}, Numer. Math. \textbf{47}
  (1985), no.~2, 217--235. \MR{799685}

\bibitem{1989BrezziMariniPietra-a}
Franco Brezzi, Luisa~Donatella Marini, and Paola Pietra, \emph{Mixed
  exponential fitting methods for current-continuity equations}, Rend. Sem.
  Mat. Fis. Milano \textbf{59} (1989), 137--147 (1992). \MR{1159692}

\bibitem{2019BurmanHe-a}
Erik Burman and Cuiyu He, \emph{Primal dual mixed finite element methods for
  indefinite advection-diffusion equations}, SIAM J. Numer. Anal. \textbf{57}
  (2019), no.~6, 2785--2811. \MR{4033692}

\bibitem{2014CangianiGeorgoulisMetcalfe-a}
Andrea Cangiani, Emmanuil~H. Georgoulis, and Stephen Metcalfe, \emph{Adaptive
  discontinuous {G}alerkin methods for nonstationary convection-diffusion
  problems}, IMA J. Numer. Anal. \textbf{34} (2014), no.~4, 1578--1597.
  \MR{3269437}

\bibitem{2010DemkowiczGopalakrishnan-a}
L.~Demkowicz and J.~Gopalakrishnan, \emph{A class of discontinuous
  {P}etrov-{G}alerkin methods. {P}art {I}: the transport equation}, Comput.
  Methods Appl. Mech. Engrg. \textbf{199} (2010), no.~23-24, 1558--1572.
  \MR{2630162}

\bibitem{2011DemkowiczGopalakrishnan-a}
\bysame, \emph{A class of discontinuous {P}etrov-{G}alerkin methods. {II}.
  {O}ptimal test functions}, Numer. Methods Partial Differential Equations
  \textbf{27} (2011), no.~1, 70--105. \MR{2743600}

\bibitem{2008DiPietroErnGuermond-a}
Daniele~A. Di~Pietro, Alexandre Ern, and Jean-Luc Guermond, \emph{Discontinuous
  {G}alerkin methods for anisotropic semidefinite diffusion with advection},
  SIAM J. Numer. Anal. \textbf{46} (2008), no.~2, 805--831. \MR{2383212}

\bibitem{gerbeau2006mathematical}
Jean-Fr{\'e}d{\'e}ric Gerbeau, Claude Le~Bris, and Tony Leli{\`e}vre,
  \emph{Mathematical methods for the magnetohydrodynamics of liquid metals},
  Clarendon Press, 2006.

\bibitem{2007GuermondPopov-a}
Jean-Luc Guermond and Bojan Popov, \emph{Linear advection with ill-posed
  boundary conditions via {$L^1$}-minimization}, Int. J. Numer. Anal. Model.
  \textbf{4} (2007), no.~1, 39--47. \MR{2289731}

\bibitem{1999Hiptmair-a}
R.~Hiptmair, \emph{Canonical construction of finite elements}, Math. Comp.
  \textbf{68} (1999), no.~228, 1325--1346. \MR{1665954 (2000b:65214)}

\bibitem{2002HoustonSchwabSueli-a}
Paul Houston, Christoph Schwab, and Endre S\"{u}li, \emph{Discontinuous
  {$hp$}-finite element methods for advection-diffusion-reaction problems},
  SIAM J. Numer. Anal. \textbf{39} (2002), no.~6, 2133--2163. \MR{1897953}

\bibitem{1974LasaintRaviart-a}
P.~Lasaint and P.-A. Raviart, \emph{On a finite element method for solving the
  neutron transport equation}, Mathematical aspects of finite elements in
  partial differential equations ({P}roc. {S}ympos., {M}ath. {R}es. {C}enter,
  {U}niv. {W}isconsin, {M}adison, {W}is., 1974), 1974, pp.~89--123. Publication
  No. 33. \MR{0658142}

\bibitem{2005LazarovZikatanov-a}
Raytcho~D. Lazarov and Ludmil~T. Zikatanov, \emph{An exponential fitting scheme
  for general convection-diffusion equations on tetrahedral meshes},
  Computational and Applied Mathematics (Obchysljuval'na ta prykladna
  matematyka, Kiev) \textbf{92} (2005), no.~1, 60--69, Also available as Tech.
  Report 04-15, Institute for Scientific Computations, Texas A\&M University,
  2004.

\bibitem{1980Nedelec-a}
J.-C. N{\'e}d{\'e}lec, \emph{Mixed finite elements in {${\bf R}^{3}$}}, Numer.
  Math. \textbf{35} (1980), no.~3, 315--341. \MR{592160 (81k:65125)}

\bibitem{1986Nedelec-a}
\bysame, \emph{A new family of mixed finite elements in {${\bf R}^3$}}, Numer.
  Math. \textbf{50} (1986), no.~1, 57--81. \MR{864305 (88e:65145)}

\bibitem{1977RaviartThomas-a}
P.-A. Raviart and J.~M. Thomas, \emph{A mixed finite element method for 2nd
  order elliptic problems}, Mathematical aspects of finite element methods
  ({P}roc. {C}onf., {C}onsiglio {N}az. delle {R}icerche ({C}.{N}.{R}.), {R}ome,
  1975), 1977, pp.~292--315. Lecture Notes in Math., Vol. 606. \MR{0483555}

\bibitem{1996RoosStynesTobiska-a}
H.-G. Roos, M.~Stynes, and L.~Tobiska, \emph{Numerical methods for singularly
  perturbed differential equations}, Springer Series in Computational
  Mathematics, vol.~24, Springer-Verlag, Berlin, 1996, Convection-diffusion and
  flow problems. \MR{1477665}

\bibitem{1969ScharfetterGummel-a}
D.~Scharfetter and H.~Gummel, \emph{Large-signal analysis of a silicon read
  diod oscilator}, IEEE Trans. Electron Devices \textbf{ED-16} (1969), no.~205,
  959--962.

\bibitem{2019WuXu-a}
Shuonan Wu and Jinchao Xu, \emph{Simplex-averaged finite element methods for
  {$H({\rm grad})$}, {$H({\rm curl})$} and {$H({\rm div})$}
  convection-diffusion problems}, SIAM Journal on Numerical Analysis
  \textbf{58} (2020), no.~1, 884--906.

\bibitem{1999XuZikatanov-a}
J.~Xu and L.~Zikatanov, \emph{A monotone finite element scheme for
  convection-diffusion equations}, Math. Comp. \textbf{68} (1999), no.~228,
  1429--1446.

\end{thebibliography}
\end{document}